\numberwithin{equation}{section}
\newcommand{\eps}{\varepsilon} \newcommand{\TT}{{\mathbb T^2_\ell}}
\newcommand{\TTT}{{\mathbb T_{\ell^{\eps}}^2}}
\def\({\left(}
\def\){\right)}
\def\1{\mathbf{1}}
\def\Xint#1{\mathchoice
{\XXint\displaystyle\textstyle{#1}}%
{\XXint\textstyle\scriptstyle{#1}}%
{\XXint\scriptstyle\scriptscriptstyle{#1}}%
{\XXint\scriptscriptstyle\scriptscriptstyle{#1}}%
\!\int}
\def\XXint#1#2#3{{\setbox0=\hbox{$#1{#2#3}{\int}$ }
\vcenter{\hbox{$#2#3$ }}\kern-.6\wd0}}
\def\dashint{\Xint-}
\def\a{\alpha}
\def\D{\displaystyle}
\def\B{{\mathcal{B}}}
\def\curl{{\rm curl\,}}
\def\diam{\mathrm{diam}\ }
\def\dist{\text{dist}\ }
\def\div{\mathrm{div} \ }
\def\dt0{{{\frac{d}{dt}}_{|t=0}}}
\def\D{\displaystyle}
\def\ep{\varepsilon}
\def\F{{\mathcal{F}}}
\def\hal{\frac{1}{2}}
\def\h{{\eta}}
\def\indic{\mathbf{1}}
\def\indic{\mathbf{1}}
\def\lep{{|\, \mathrm{ln}\, \ep|}}
\def\loc{{\text{\rm loc}}}
\def\l|{\left|}
\def\mn{\mathbb{N}}
\def\mn{\mathbb{N}}
\def\mr{\mathbb{R}}
\def\mz{\mathbb{Z}}
\def\nab{\nabla}
\def\np{\nab^{\perp}}
\def\p{\partial}
\def\ro{\rho}
\def\r|{\right|}
\def\sm{\setminus}
\def\URC{{K_{R+C}}}
\def\URc{{K_{R-C}}}
\def\UR{{ K_R}}
\def\vp{\varphi}
\def\h{h_{\eps}'}
\newtheorem{theorem}{Theorem}
\newtheorem{coro}{Corollary}[section] 
\newtheorem{lem}[coro]{Lemma}
\newtheorem{pro}[coro]{Proposition}
\newtheorem{definition}[coro]{Definition}
\newtheorem{remark}[coro]{Remark}
\newtheorem{proposition}[coro]{Proposition}
\begin{document}

\title{\bf The $\mathbf \Gamma$-limit of the two-dimensional
  Ohta-Kawasaki energy. II. Droplet arrangement at the sharp interface
  level via the renormalized energy.}

\author{Dorian Goldman, Cyrill B. Muratov and Sylvia Serfaty}

\maketitle

\begin{abstract}
  This is the second in a series of papers in which we derive a
  $\Gamma$-expansion for the two-dimensional non-local Ginzburg-Landau
  energy with Coulomb repulsion known as the Ohta-Kawasaki model in
  connection with diblock copolymer systems.  In this model, two
  phases appear, which interact via a nonlocal Coulomb type
  energy. Here we focus on the sharp interface version of this energy
  in the regime where one of the phases has very small volume
  fraction, thus creating small ``droplets'' of the minority phase in
  a ``sea'' of the majority phase.  In our previous paper, we computed
  the $\Gamma$-limit of the leading order energy, which yields the
  averaged behavior for almost minimizers, namely that the density of
  droplets should be uniform. Here we go to the next order and derive
  a next order $\Gamma$-limit energy, which is exactly the Coulombian
  renormalized energy obtained by Sandier and Serfaty as a limiting
  interaction energy for vortices in the magnetic Ginzburg-Landau
  model. The derivation is based on the abstract scheme of
  Sandier-Serfaty that serves to obtain lower bounds for 2-scale
  energies and express them through some probabilities on patterns via
  the multiparameter ergodic theorem.  Without thus appealing to the
  Euler-Lagrange equation, we establish for all configurations which
  have ``almost minimal energy'' the asymptotic roundness and radius
  of the droplets, and the fact that they asymptotically shrink to
  points whose arrangement minimizes the renormalized energy in some
  averaged sense. Via a kind of $\Gamma$-equivalence, the obtained
  results also yield an expansion of the minimal energy for the
  original Ohta-Kawasaki energy.  This leads to expecting to see
  triangular lattices of droplets as energy minimizers.
\end{abstract}
  
\section{Introduction}
\label{sec:introduction}

This is our second paper devoted to the $\Gamma$-convergence study of
the two-dimensional Ohta-Kawasaki energy functional\cite{ohta86} in
two space dimensions in the regime near the onset of non-trivial
minimizers. The energy functional has the following form:
\begin{align}
  \label{EE}
 \mathcal{E}[u] = \int_\Omega \( \frac{\eps^2}{2} |\nabla u|^2 +
  V(u)\) dx + \frac{1}{2} \int_\Omega \int_\Omega (u(x)- \bar u)
  G_0(x, y) (u(y)- \bar u) \, dx \, dy,
\end{align}
where $\Omega$ is the domain occupied by the material, $u: \Omega \to
\mathbb R$ is the scalar order parameter, $V(u)$ is a symmetric
double-well potential with minima at $u = \pm 1$, such as the usual
Ginzburg-Landau potential $V(u) = \tfrac{9}{32} (1 - u^2)^2$ (for
simplicity, the overall coefficient in $V$ is chosen to make the
associated surface tension constant to be equal to $\eps$, i.e., we
have $\int_{-1}^1 \sqrt{2 V(u)} \,du = 1$), $\eps > 0$ is a parameter
characterizing interfacial thickness, $\bar u \in (-1, 1)$ is the
background charge density, and $G_0$ is the Neumann Green's function
of the Laplacian, i.e., $G_0$ solves
\begin{align}
  \label{G0}
  -\Delta G_0(x, y) = \delta(x - y) - {1 \over |\Omega|}, \qquad
  \int_\Omega G_0(x, y) \, dx = 0,
\end{align}
where $\Delta$ is the Laplacian in $x$ and $\delta(x)$ is the Dirac
delta-function, with Neumann boundary conditions. Note that $u$ is
also assumed to satisfy the ``charge neutrality'' condition
\begin{align}
  \label{neutr}
  {1 \over |\Omega|} \int_\Omega u \, dx = \bar u.
\end{align}
For a discussion of the motivation and the main quantitative features
of this model, see our first paper \cite{SCD}, as well as
\cite{m:cmp10,m:pre02}. For specific applications to physical systems,
we refer the reader to
\cite{degennes79,stillinger83,ohta86,nyrkova94,glotzer95,lundqvist,m:pre02,m:phd}.

In our first paper \cite{SCD}, we established the leading order term
in the $\Gamma$-expansion of the energy in \eqref{EE} in the scaling
regime corresponding to the threshold between trivial and non-trivial
minimizers. More precisely, we studied the behavior of the energy as
$\eps \to 0$ when
\begin{align}
  \label{ubeps}
  \bar u^\eps := -1 + \eps^{2/3} |\ln \eps|^{1/3} \bar\delta,
\end{align}
for some fixed $\bar \delta > 0$ and when $\Omega$ is a flat
two-dimensional torus of side length $\ell$, i.e., when $\Omega =
\mathbb T^2_\ell = [0, \ell)^2$, with periodic boundary conditions. As
follows from \cite[Theorem 2]{SCD} and the arguments in the proof of
\cite[Theorem 3]{SCD}, in this regime minimizers of $\mathcal E$
consist of many small ``droplets'' (regions where $u > 0$) and their
number blows up as $\ep \to 0$. We showed that, after a suitable
rescaling the energy functional in \eqref{EE} $\Gamma$-converges in
the sense of convergence of the (suitably normalized) droplet
densities, to the limit functional $E^0[\mu]$ defined for all
densities $\mu \in \mathcal M(\TT) \cap H^{-1}(\TT)$ by:
\begin{align}
  \label{E0mu}
  E^0[\mu] = \frac{\bar \delta^2 \ell^2}{2\kappa^2} + \left(3^{2/3} -
    \frac{2 \bar \delta}{\kappa^2} \right) \int_\TT d\mu + 2
  \iint_{\TT \times \TT} G(x - y) d \mu(x) d \mu(y),
\end{align}
where $G(x)$ is the {\em screened} Green's function of the Laplacian,
i.e., it solves the periodic problem for the equation
\begin{align}
  \label{G}
  -\Delta G + \kappa^2 G = \delta(x) \quad \text{in} \quad \TT,
\end{align}
and $\kappa = 1/\sqrt{V''(1)} = \frac23$. Here we noted that the
double integral in \eqref{E0mu} is well defined in the sense
$\iint_{\TT \times \TT} G(x - y) d \mu(x) d \mu(y) := \int_\TT v d
\mu$, where the latter is interpreted as the Hahn-Banach extension of
the corresponding linear functional defined by the integral on smooth
test functions (see also \cite[Sec. 7.3.1]{SS3} and \cite{brezis79}
for further discussion). Indeed, $v := G * d \mu$ is the convolution
understood distributionally, i.e., $\langle G*d \mu, f \rangle :=
\langle G*f, d \mu \rangle = \int_\TT \left( \int_\TT G(x - y) f(y) dy
\right) d \mu(x)$ for every $f \in C^\infty(\TT)$ and, hence, by
elliptic regularity $\| v \|_{H^1(\TT)} \leq C \| f \|_{H^{-1}(\TT)}$
for some $C > 0$, so $v \in H^1(\TT)$.

In particular, for $\bar \delta > \bar \delta_c$, where
\begin{align}
  \label{deltac}
  \bar\delta_c := \frac12 3^{2/3} \kappa^2,
\end{align}
the limit energy $E^0[\mu]$ is minimized by $d\mu(x) = \bar\mu \, dx$, where 
\begin{align}
  \label{mubar}
  \bar\mu = \tfrac12 (\bar\delta - \bar \delta_c) \qquad \text{and}
  \qquad E^0[\bar\mu] = \tfrac{\bar\delta_c}{2 \kappa^2} (2 \bar\delta
  - \bar\delta_c).
\end{align}
When $\bar \delta \leq \bar \delta_c$, the limit energy is minimized
by $\mu = 0$, with $E^0[0] = \bar \delta^2 / ( 2 \kappa^2)$. The value
of $\bar \delta = \bar \delta_c$ thus serves as the threshold
separating the trivial and the non-trivial minimizers of the energy in
\eqref{EE} together with \eqref{ubeps} for sufficiently small
$\eps$. Above that threshold, the droplet density of energy-minimizers
converges to the uniform density $\bar{\mu}$.
 
The key point that enables the analysis above is a kind of
$\Gamma$-equivalence between the energy functional in \eqref{EE} and
its screened sharp interface analog (for general notions of
$\Gamma$-equivalence or variational equivalence, see
\cite{braides08,ACP}):
\begin{equation}
  \label{E2}
  E^\eps[u] = \frac{\varepsilon}{2} \int_\TT  |\nabla u| \, dx +
  \frac{1}{2}  \int_\TT  \int_\TT  (u(x)-\bar u^\eps) G(x - y) (u(y)
  -\bar u^\eps) \, dx \, dy. 
\end{equation}
Here, $G$ is the screened potential as in \eqref{G}, and $u \in
\mathcal A$, where
\begin{align}
  \label{A}
  \mathcal A := BV(\TT; \{-1, 1\}), 
\end{align}
and we note that on the level of $E^\eps$ the neutrality condition in
\eqref{neutr} has been removed. As we showed in \cite{SCD},
following the approach of \cite{m:cmp10}, for $\mathcal E^\eps$ given
by \eqref{EE} in which $\bar u = \bar u^\eps$ and $\bar u^\eps$ is
defined in \eqref{ubeps}, we have
\begin{align}
  \label{EEEepsal}
  \min \mathcal E^\eps = \min E^\eps + O(\eps^\alpha \min E^\eps),
\end{align}
for some $\alpha > 0$. Therefore, in order to understand the leading
order asymptotic expansion of the minimal energy $\min \mathcal
E^\eps$ in terms of $|\ln \eps|^{-1}$, it is sufficient to obtain such
an expansion for $\min E^\eps$. This is precisely what we will do in
the present paper. 

In view of the discussion above, in this paper we concentrate our
efforts on the analysis of the sharp interface energy $E^\eps$ in
\eqref{E2}. An extension of our results to the original diffuse
interface energy $\mathcal E^\eps$ would lead to further technical
complications that lie beyond the scope of the present paper and will
be treated elsewhere. Here we wish to extract the next order
non-trivial term in the $\Gamma$-expansion of the sharp interface
energy $E^\eps$ after \eqref{E0mu}. In contrast to \cite{m:cmp10}, we
will not use the Euler-Lagrange equation associated to \eqref{E2}, so
our results about minimizers will also be valid for ``almost
minimizers'' (cf.  Theorem \ref{th2}).

We recall that for $\eps \ll 1$ the energy minimizers for $E^\eps$ and
$\bar \delta > \bar \delta_c$ consist of $O(|\ln \eps|)$ nearly
circular droplets of radius $r \simeq 3^{1/3} \eps^{1/3} |\ln
\eps|^{-1/3}$ uniformly distributed throughout $\TT$ \cite[Theorem
2.2]{m:cmp10}.  This is in contrast with the study of
\cite{choksi10,choksi11} for a closely related energy, where the
number of droplets remains bounded as $\ep \to 0$, and the authors
extract a limiting interaction energy for a finite number of points.

By $\Gamma$-convergence, we obtained in \cite[Theorem 1]{SCD} the
convergence of the droplet density of almost minimizers $(u^\eps)$ of
$E^\ep$:
\begin{align}
  \label{mueps1}
  \mu^\eps(x) := \frac{1}{2} \eps^{-2/3} |\ln
  \eps|^{-1/3}(1+u^\eps(x)),
\end{align}
to the uniform density $\bar \mu$ defined in \eqref{mubar}. However,
this result does not say anything about the microscopic placement of
droplets in the limit $\eps \to 0$. In order to understand the
asymptotic arrangement of droplets in an energy minimizer, our plan is
to blow-up the coordinates by a factor of $\sqrt{\lep}$, which is the
inverse of the scale of the typical inter-droplet distance, and to
extract the next order term in the $\Gamma$-expansion of the energy in
terms of the limits as $\ep \to 0$ of the blown-up configurations
(which will consist of an infinite number of point charges in the
plane with identical charge).

We will show that the arrangement of the limit point configurations is
governed by the Coulombic renormalized energy $W$, which was
introduced in \cite{SS3}. That energy $W$ was already derived as a
next order $\Gamma$-limit for the magnetic Ginzburg-Landau model of
superconductivity \cite{SS2,SS3}, and also for two-dimensional Coulomb
gases \cite{SS4}. Our results here follow the same method of
\cite{SS2}, and yield almost identical conclusions.

The ``Coulombic renormalized energy'' is a way of computing a total
Coulomb interaction between an infinite number of point charges in the
plane, neutralized by a uniform background charge (for more details
see Section \ref{sec2}).  It is shown in \cite{SS2} that its minimum
is achieved. It is also shown there that the minimum among simple
lattice patterns (of fixed volume) is uniquely achieved by the
triangular lattice (for a closely related result, see \cite{CO}), and
it is conjectured that the triangular lattice is also a global
minimizer. This triangular lattice is called ``Abrikosov lattice'' in
the context of superconductivity and is observed in experiments in
superconductors \cite{tinkham}.

The next order limit of $E^\ep$ that we shall derive below is in fact
the average of the energy $W$ over all limits of blown-up
configurations (i.e. average with respect to the blow up center).  Our
result says that limits of blow-ups of (almost) minimizers should
minimize this average of $W$. This permits one to distinguish between
different patterns at the microscopic scale and it leads, in view of
the conjecture above, to expecting to see triangular lattices of
droplets (in the limit $\ep \to 0$), around almost every blow-up
center (possibly with defects).  Note that the selection of triangular
lattices was also considered in the context of the Ohta-Kawasaki
energy by Chen and Oshita \cite{CO}, but there they were only obtained
as minimizers among simple lattice configurations consisting of
non-overlapping ideally circular droplets.

It is somewhat expected that minimizers of the Ohta-Kawasaki energy in
the macroscopic setting are periodic patterns in all space dimensions
(in fact in the original paper \cite{ohta86} only periodic patterns
are considered as candidates for minimizers). This fact has never been
proved rigorously, except in one dimension by M\"uller \cite{mu} (see
also \cite{ren00trusk,yip06}), and at the moment seems very difficult.
For higher-dimensional problems, some recent results in this direction
were obtained in \cite{ACO,spadaro09,m:cmp10} establishing
equidistribution of energy in various versions of the Ohta-Kawasaki
model on macroscopically large domains. Several other results
\cite{choksi10,choksi11,sternberg11,Cicalese} were also obtained to
characterize the geometry of minimizers on smaller domains.  The
results we obtain here, in the regime of small volume fraction and in
dimension two, provide more quantitative and qualitative information
(since we are able to distinguish between the cost of various
patterns, and have an idea of what the minimizers should be like) and
a first setting where periodicity can be expected to be proved.
  
The Ohta-Kawasaki setting differs from that of the magnetic
Ginzburg-Landau model in the fact that the droplet ``charges'' (i.e.,
their volume) are all positive, in contrast with the vortex degrees in
Ginzburg-Landau, which play an analogous role and can be both positive
and negative integers.  It also differs in the fact that the droplet
volumes are not quantized, contrary to the degrees in the
Ginzburg-Landau model. This creates difficulties and the major
difference in the proofs. In particular we have to account for the
possibility of many very small droplets, and we have to show that the
isoperimetric terms in the energy suffice to force (almost) all the
droplets to be round and of fixed volume. This has to be done at the
same time as the lower bound for the other term in the energy, for
example an adapted ``ball construction'' for non-quantized quantities
has to be re-implemented, and the interplay between these two effects
turns out to be delicate.

\medskip

Our paper is organized as follows. In Section \ref{sec2} we formulate
the problem and state our main results concerning the $\Gamma$-limit
of the next order term in the energy \eqref{E2} after the zeroth order
energy derived in \cite{SCD} is subtracted off. In Section
\ref{setup}, we derive a lower bound on this next order energy via an
energy expansion as done in \cite{SCD} however isolating lower order
terms obtained via the process. We then proceed via a ball
construction as in \cite{jerrardball,sandierball,SS3} to obtain lower
bounds on this energy in Section \ref{sec4} and consequently obtain an
energy density bounded from below with almost the same energy via
energy displacement as in \cite{SS2} in Section \ref{sec5}. In Section
\ref{sec6} we obtain explicit lower bounds on this density on bounded
sets in the plane in terms of the renormalized energy for a finite
number of points. We are then in the appropriate setting to apply the
multiparameter ergodic theorem as in \cite{SS2} to extend the lower
bounds obtained to global bounds, which we present at the end of
Section \ref{sec6}. Finally the corresponding upper bound (cf. Part
(ii) of Theorem \ref{main}) is presented in Section \ref{sec11}.

\paragraph{Some notations.} We use the notation $(u^\eps) \in \mathcal
A$ to denote sequences of functions $u^\eps \in \mathcal A$ as $\eps =
\eps_n \to 0$, where $\mathcal A$ is an admissible class. We also use
the notation $\mu \in \mathcal M(\Omega)$ to denote a positive finite
Radon measure $d \mu$ on the domain $\Omega$. With a slight abuse of
notation, we will often speak of $\mu$ as the ``density'' on $\Omega$
and set $d \mu(x) = \mu(x) dx$ whenever $\mu \in L^1(\Omega)$. With
some more abuse of notation, for a measurable set $E$ we use $|E|$ to
denote its Lebesgue measure, $|\partial E|$ to denote its perimeter
(in the sense of De Giorgi), and $\mu(E)$ to denote $\int_E d
\mu$. The symbols $H^1(\Omega)$, $BV(\Omega)$, $C^k(\Omega)$ and
$H^{-1}(\Omega)$ denote the usual Sobolev space, the space of
functions of bounded variation, the space of $k$-times continuously
differentiable functions, and the dual of $H^1(\Omega)$,
respectively. The symbol $o_\eps(1)$ stands for the quantities that
tend to zero as $\eps \to 0$ with the rate of convergence depending
only on $\ell$, $\bar \delta$ and $\kappa$.

\bigskip
\section{Problem formulation and main results}
\label{sec2}

In the following, we fix the parameters $\kappa > 0$, $\bar\delta > 0$
and $\ell > 0$, and work with the energy $E^\eps$ in \eqref{E2}, which
can be equivalently rewritten in terms of the connected components
$\Omega_i^\eps$ of the family of sets of finite perimeter $\Omega^\eps
:= \{u^\eps = +1\}$, where $(u^\eps) \in \mathcal A$ are almost
minimizers of $E^\eps$, for sufficiently small $\eps$ (cf. the
discussion at the beginning of Sec. 2 in \cite{SCD}). The sets
$\Omega^\eps$ can be decomposed into countable unions of connected
disjoint sets, i.e., $\Omega^\eps = \bigcup_i \Omega_i^\eps$, whose
boundaries $\partial \Omega_i^\eps$ are rectifiable and can be
decomposed (up to negligible sets) into countable unions of disjoint
simple closed curves.  Then the density $\mu^\eps$ in \eqref{mueps1}
can be rewritten as
\begin{align}
  \label{mu}
  \mu^\eps(x) := \eps^{-2/3} |\ln \eps|^{-1/3} \sum_i
  \chi_{\Omega_i^\eps}(x),
\end{align}
where $\chi_{\Omega_i^\eps}$ are the characteristic functions of
$\Omega_i^\eps$.  Motivated by the scaling analysis in the discussion
preceding equation \eqref{mueps1}, we define the rescaled areas and
perimeters of the droplets: 
\begin{align}
  \label{AP}
  A_i^\eps := \eps^{-2/3} |\ln \eps|^{2/3} |\Omega_i^\eps|, \qquad
  P_i^\eps := \eps^{-1/3} |\ln \eps|^{1/3} |\partial \Omega_i^\eps|.
\end{align}
Using these definitions, we obtain (see \cite{SCD,m:cmp10}) the
following equivalent definition of the energy of the family
$(u^\eps)$:
\begin{align}
  E^\eps[u^\eps] = \eps^{4/3} |\ln \eps|^{2/3} \left( {\bar \delta^2
      \ell^2 \over 2 \kappa^2} + \bar E^\eps[u^\eps]
  \right), \label{EEE}
\end{align}
where
\begin{align}
  \bar E^\eps[u^\eps] := {1 \over |\ln \eps|} \sum_i \left( P_i^\eps -
    {2 \bar \delta \over \kappa^2} A_i^\eps \right) + 2 \iint_{\TT
    \times \TT} G(x - y) d \mu^\eps(x) d \mu^\eps(y). \label{Ebar}
\end{align}
Also note the relation
\begin{align}
  \label{Amu}
  \mu^\eps(\TT) = {1 \over |\ln \eps|} \sum_i A_i^\eps.
\end{align}

As was shown in \cite{m:cmp10,SCD}, in the limit
$\eps \to 0$ the minimizers of $E^\eps$ are non-trivial if and only if
$\bar \delta > \bar \delta_c$, and we have asymptotically
\begin{align}
  \label{minE}
  \min E^\eps \simeq {\bar\delta_c \over 2 \kappa^2} (2 \bar \delta -
  \bar \delta_c) \eps^{4/3} |\ln \eps|^{2/3} \ell^{2} \qquad \text{as
  } \eps \to 0.
\end{align}
Furthermore, if $\mu^\eps$ is as in \eqref{mu} and we let $v^\eps$ be
the unique solution of
\begin{equation}\label{vdef}
  -\Delta v^{\varepsilon} + \kappa^2 v^{\varepsilon} = \mu^\eps
  \qquad \text{in} \quad W^{2,p}(\TT), 
\end{equation}
for any $p < \infty$, then we have 
\begin{align}
  \label{vepslim}
  v^\eps \rightharpoonup \bar v := {1 \over 2 \kappa^2} (\bar \delta -
  \bar \delta_c) \qquad \text{in} \quad H^1(\TT).
\end{align}
To extract the next order terms in the $\Gamma$-expansion of $E^\eps$
we, therefore, subtract this contribution from $E^\eps$ to define a
new rescaled energy $F^\eps$ (per unit area):
\begin{equation}
  \label{F}
  F^\eps[u] := \eps^{-4/3} |\ln \eps|^{1/3}  \ell^{-2} E^\eps[u]
  -  \lep    {\bar\delta_c \over 2 \kappa^2} (2 \bar \delta - \bar \delta_c)
  +  {1 \over 4 \cdot 3^{1/3}} (\bar \delta - \bar \delta_c) (\ln 
  |\ln \eps| + \ln 9).
\end{equation}
Note that we also added the third term into the bracket in the
right-hand side of \eqref{F} to subtract the next-to-leading order
contribution of the droplet self-energy, and we have scaled $F^\eps$
in a way that allows to extract a non-trivial $O(1)$ contribution to
the minimal energy (see details in Section \ref{setup}).  The main
result of this paper in fact is to establish $\Gamma$-convergence of
$F^\eps$ to the renormalized energy $W$ which we now define.

In \cite{SS2}, the renormalized energy $W$ was introduced and defined
in terms of the superconducting current $j$, which is particularly
convenient for the studies of the magnetic Ginzburg-Landau model of
superconductivity. Here, instead, we give an equivalent definition,
which is expressed in terms of the limiting electrostatic potential of
the charged droplets, after blow-up, which is the limit of some proper
rescaling of $v^\ep$ (see below).  However, this limiting
electrostatic potential will only be known up to additive constants,
due to the fact that we will take limits over larger and larger tori.
This issue can be dealt with in a natural way by considering {\em
  equivalence classes} of potentials, whereby two potentials differing
by a constant are not distinguished:
\begin{align}
  \label{vequiv}
  [\varphi] := \{ \varphi + c \ | \ c \in \mathbb R \}.
\end{align}
This definition turns the homogeneous spaces $\dot{W}^{1,p}(\mathbb
R^d)$ into Banach spaces of equivalence classes of functions in
$W^{1,p}_{loc}(\mathbb R^d)$ defined in \eqref{vequiv} (see, e.g.,
\cite{ortner12}). Here we similarly define the local analog of the
homogeneous Sobolev spaces as
\begin{align}
  \label{H1dot}
  \dot{W}^{1,p}_{loc}(\mathbb R^2) := \left\{ [\varphi] \ | \ \varphi
    \in W^{1,p}_{loc}(\mathbb R^2) \right\},
\end{align}
with the notion of convergence to be that of the $L^p_{loc}$
convergence of gradients.  In the following, we will omit the brackets
in $[ \cdot ]$ to simplify the notation and will write $\varphi \in
\dot{W}^{1,p}_{loc}(\mathbb R^2)$ to imply that $\varphi$ is any member of
the equivalence class in \eqref{vequiv}.

We define the admissible class of the renormalized energy as follows :
\begin{definition}\label{defam}
  For given $m > 0$ and $p \in (1,2)$, we say that $\varphi$ belongs
  to the admissible class $\mathcal A_m$, if $\varphi \in
  \dot{W}^{1,p}_{loc}(\mathbb R^2)$ and $\varphi$ solves
  distributionally 
  \begin{align}
    \label{vm}
    -\Delta \varphi = 2 \pi \sum_{a \in \Lambda} \delta_a - m,
  \end{align}
  where $\Lambda \subset \mathbb R^2$ is a discrete set and
  \begin{align}
    \label{Lamm}
    \lim_{R \to \infty} {2 \over R^2} \int_{B_R(0)} \sum_{a \in
      \Lambda} \delta_a(x) dx = m.
  \end{align}
\end{definition}

\begin{remark}
Observe that if $\varphi \in \mathcal A_m$, then for every $x \in B_R(0)$
we have 
\begin{align}
  \label{varphi}
  \varphi(x) = \sum_{a \in \Lambda_R} \ln |x - a|^{-1} + \varphi_R(x),
\end{align}
where $\Lambda_R \ := \Lambda \cap \bar B_R(0)$ is a finite set of
distinct points and $\varphi_R \in C^\infty(\mathbb R^2)$ is analytic
in $B_R(0)$. In particular, the definition of $\mathcal A_m$ is
independent of $p$.
\end{remark}

We next define the renormalized energy.

\begin{definition}\label{Wvdef}
  For a given $\varphi \in \displaystyle\bigcup_{m > 0} \mathcal A_m$,
  the renormalized energy $W$ of $\varphi$ is defined as
  \begin{align}
    \label{Wv}
    W(\varphi) := \limsup_{R \to \infty} \lim_{\eta \to 0} {1 \over
      |K_R| } \left( \int_{\mathbb R^2 \backslash \cup_{a \in \Lambda}
        B_\eta(a)} \frac12 |\nabla \varphi|^2 \chi_R dx + \pi \ln \eta
      \sum_{a \in \Lambda} \chi_R(a) \right),
  \end{align}
  where $K_R = [-R,R]^2$, $\chi_R$ is a smooth cutoff function with
  the properties that $0 < \chi_R < 1$, in $K_R \sm (\partial K_R \cup
  K_{R-1})$, $\chi_R(x) = 1$ for all $x \in K_{R-1}$, $\chi_R(x) = 0$
  for all $x \in \mathbb R^2 \backslash K_R$, and $|\nabla \chi_R|
  \leq C$ for some $C > 0$ independent of $R$.
\end{definition}

Various properties of $W$ are established in \cite{SS2}, we refer the
reader to that paper. The most relevant to us here are
\begin{enumerate}
\item $\min_{\mathcal{A}_m}W$ is achieved for each $m > 0$.

\item If $\vp \in \mathcal{A}_m$ and $\vp' (x) :=
  \vp(\frac{x}{\sqrt{m}})$, then $\varphi' \in \mathcal{A}_1$ and
\begin{equation}\label{scalingW}
  W(\vp) = m \(  W(\vp')- \frac14 \log m\),
\end{equation}
hence  
$$ 
\min_{\mathcal{A}_m}W = m \( \min_{\mathcal{A}_1} W- \frac14 \log m\).
$$

\item $W$ is minimized over potentials in $\mathcal A_1$ generated by
  charge configurations $\Lambda$ consisting of simple lattices by the
  potential of a triangular lattice, i.e. \cite[Theorem 2 and Remark
  1.5]{SS2},
$$
\min_{\stackrel{\varphi \in \mathcal A_1}{\Lambda \text{ simple
      lattice}}} W(\varphi) = W(\varphi^\triangle) = - \frac12 \ln
(\sqrt{2 \pi b} \, |\eta(\tau)|^2) \simeq -0.2011,
$$
where $\tau = a + i b$, $\eta(\tau) = q^{1/24} \prod_{n \geq 1} (1 -
q^n)$ is the Dedekind eta function, $q = e^{2 \pi i \tau}$, $a$ and
$b$ are real numbers such that $\Lambda^*_\triangle = {1 \over \sqrt{2
    \pi b}} \Big( (1,0) \mathbb Z \oplus (a, b) \mathbb Z \Big)$ is
the dual lattice to a triangular lattice $\Lambda^\triangle$ whose
unit cell has area $2 \pi$, and $\varphi^\triangle$ solves \eqref{vm}
with $\Lambda = \Lambda^\triangle$.
\end{enumerate}
In particular, from property 2 above it is easy to see that the role
of $m$ in the definition of $W$ is inconsequential.

We are now ready to state our main result.  Let $\ell^\eps := |\ln
\eps|^{1/2} \ell$. For a given $u^\eps \in \mathcal A$, we then
introduce the potential (recall that $\varphi^\eps$ is a
representative in the equivalence class defined in \eqref{vequiv})
\begin{align}
  \label{phiepsdef}
  \varphi^\eps(x) := 2 \cdot 3^{-2/3} |\ln \eps| \ \tilde v^\eps(x |\ln
  \eps|^{-1/2}),
\end{align} 
where $\tilde v^\eps$ is a periodic extension of $v^\eps$ from
$\mathbb T^2_{\ell^\eps}$ to the whole of $\mathbb R^2$.  We also
define $\mathcal{P}$ to be the family of translation-invariant
probability measures on $\dot{W}^{1,p}_{loc}(\mathbb R^2)$
concentrated on $\mathcal A_m$ with $m = 3^{-2/3} (\bar \delta - \bar
\delta_c)$.

\begin{theorem} \textbf{\emph{($\Gamma$-convergence of
      $F^{\varepsilon}$)}}
  \label{main}
  \noindent Fix $\kappa > 0$, $\bar \delta > \bar \delta_c$, $p \in
  (1,2)$ and $\ell > 0$, and let $F^{\varepsilon}$ be defined by
  (\ref{F}). Then, as $\eps \to 0$ we have
  \begin{align}
    \label{F0}
    F^{\varepsilon} \stackrel{\Gamma}\to F^0[P] := 3^{4/3} \int
    W(\varphi) dP(\varphi) + \frac{3^{2/3}(\bar \delta - \bar \delta_c)}{8},
  \end{align}
  where $P \in \mathcal{P}$. More precisely:
  \begin{itemize}
  \item[i)] (Lower Bound) Let $(u^\eps) \in \mathcal A$ be such that
    \begin{align}
      \label{ZO.1}
      \limsup_{\varepsilon \to 0} F^{\varepsilon}[u^{\varepsilon}] <
      +\infty,
    \end{align}
    and let $P^{\varepsilon}$ be the probability measure on
    $\dot{W}^{1,p}_{loc}(\mathbb{R}^2)$ which is the pushforward of
    the normalized uniform measure on
    $\mathbb{T}_{\ell^\varepsilon}^2$ by the map $x \mapsto
    \varphi^{\varepsilon}(x+\cdot)$, where $\varphi^\eps$ is as in
    \eqref{phiepsdef}.  Then, upon extraction of a subsequence,
    $(P^\eps)$ converges weakly to some $P\in \mathcal{P}$, in the
    sense of measures on $\dot{W}^{1,p}_{loc}(\mathbb R^2)$ and
    \begin{align}
      \label{Lbound1}
      \liminf_{\eps \to 0} F^\eps[u^\eps] \geq F^0[P].
    \end{align}

  \item[ii)] (Upper Bound) Conversely, for any probability measure
    $P\in \mathcal{P}$, letting $Q$ be its push-forward under
    $-\Delta$, there exists $(u^{\varepsilon}) \in \mathcal A$ such
    that letting $Q^{\varepsilon}$ be the pushforward of the
    normalized Lebesgue measure on $\mathbb{T}_{\ell^\varepsilon}^2$
    by $x \mapsto -\Delta \varphi^{\varepsilon} \left(x +
      \cdot\right)$, where $\varphi^\eps$ is as in \eqref{phiepsdef},
    we have $Q^{\varepsilon} \rightharpoonup Q$, in the sense of
    measures on $W^{-1,p}_{loc}(\mathbb R^2)$, and
    \begin{equation}\label{Ubound1}
      \limsup_{\eps \to 0} F^{\varepsilon}[u^{\varepsilon}] \leq F^0[P],
    \end{equation}
    as $\varepsilon \to 0$. 
  \end{itemize}
\end{theorem}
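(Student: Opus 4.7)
My plan is to follow the abstract Sandier--Serfaty scheme for two-scale energies as implemented in \cite{SS2}: after subtracting off the zeroth-order contribution captured by \eqref{minE} and \eqref{vepslim}, the residual energy $F^\eps$ should, once the space variable is blown up by a factor $\sqrt{\lep}$ (so that neighboring droplets sit at $O(1)$ distance), localize into a nonnegative energy density on $\mathbb R^2$ whose averages against translation-invariant probability measures on potentials converge to an integral of $W$. For the lower bound, the core is to produce a localized energy density bounded below essentially by $|\nabla\varphi^\eps|^2/2$ minus the divergent self-energy of the point charges; for the upper bound, one reverses the construction by placing nearly round droplets of fixed volume at positions prescribed by a limiting configuration.

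\textbf{Lower bound.} First I would perform an ``energy expansion'' in the spirit of \cite{SCD}: expand $(u^\eps-\bar u^\eps)$ around the limit profile governed by $\bar v$, substitute into \eqref{E2}, and use \eqref{vdef}--\eqref{vepslim} to isolate from $E^\eps[u^\eps]$ a remainder that, once divided by $\eps^{4/3}|\ln\eps|^{-1/3}\ell^2$ and the leading constants of \eqref{F} are removed, is a sum of (a) per-droplet isoperimetric/self-energy defects and (b) a Coulombic interaction written through $\varphi^\eps$ from \eqref{phiepsdef}. Because droplet volumes are \emph{not} quantized, the next step is an adapted ball construction à la \cite{jerrardball,sandierball,SS3}: I would sort the droplets into ``good'' ones (nearly round, with rescaled area close to the optimal $3^{2/3}$) and ``bad'' ones, absorb the latter via the isoperimetric surplus in $P_i^\eps - 2\bar\delta A_i^\eps/\kappa^2$, and run the ball growth on the good droplets to collect a Coulomb lower bound with the correct $\pi\ln\eta$-type renormalization. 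Then, as in \cite{SS2}, an energy-displacement procedure redistributes the negative parts into a nonnegative measure $f^\eps$ on $\mathbb T^2_{\ell^\eps}$ whose total mass equals $F^\eps[u^\eps]+o(1)$. On any fixed square $K_R$ in the blown-up plane, I would show that $f^\eps$ dominates the $W$-type integrand evaluated on $\varphi^\eps$ up to $o_R(1)$ plus the explicit additive constant $3^{2/3}(\bar\delta-\bar\delta_c)/8$ coming from the droplet self-energy renormalization in \eqref{F}. Finally, the multiparameter ergodic theorem applied to $P^\eps$, exactly as in \cite[Sec.~7]{SS2}, upgrades this localized bound to the global bound \eqref{Lbound1} with $3^{4/3}\int W\,dP$ on the right-hand side.

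\textbf{Upper bound.} Given $P\in\mathcal P$, I would construct competitors by discretizing: on a large torus $K_R$, sample (via Fubini on $P$) a configuration $\Lambda\subset\mathbb R^2$ with density $m=3^{-2/3}(\bar\delta-\bar\delta_c)$ whose potential satisfies $W(\varphi)\le\int W\,dP+o_R(1)$, then, after removing a small fraction of points that are too close together, place at each remaining point (after scaling back by $\sqrt{\lep}$) a round droplet of the optimal rescaled radius $3^{1/3}$. Periodizing this pattern across $\TT$ using an $R$-periodic construction and applying a standard screening/truncation as in \cite[Sec.~7]{SS2} to match the periodic boundary conditions of $\TT$ yields a test function $u^\eps\in\mathcal A$ for which the direct energy computation, together with the sharp identity $\int_{B_\eta}\cdots = -\pi\ln\eta+O(1)$ for each droplet's self-energy, gives $F^\eps[u^\eps]\le F^0[P]+o_R(1)+o_\eps(1)$. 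A diagonal extraction in $R$ and $\eps$ produces \eqref{Ubound1} and the stated weak convergence $Q^\eps\rightharpoonup Q$.

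\textbf{Main obstacle.} The principal difficulty, absent in the Ginzburg-Landau setting, is the lack of quantization of the ``charges'' $A_i^\eps$: the ball construction must be re-engineered for a continuum of possible masses, and a significant portion of the work consists in showing that the perimeter deficit in \eqref{Ebar} is strong enough to force all but an energetically negligible collection of droplets to be asymptotically round with area precisely $3^{2/3}$. This isoperimetric rigidity has to be proved \emph{simultaneously} with the Coulombic ball-growth lower bound, because each estimate feeds the other: the ball construction needs a bound on the number and sizes of droplets, while the isoperimetric clean-up needs the extra room provided by the Coulomb lower bound. Carrying out this coupled argument with sharp constants so that the additive renormalization $3^{2/3}(\bar\delta-\bar\delta_c)/8$ in \eqref{F0} comes out correctly is, I expect, the most delicate part of the proof.
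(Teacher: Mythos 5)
Your sketch follows the paper's actual proof essentially step for step: an energy extraction isolating a positive isoperimetric-control term $M_\ep$ (Proposition \ref{lem1}), a ball construction adapted to non-quantized charges (Proposition \ref{boulesren}), energy displacement in the manner of \cite{SS2,compagnon} (Proposition \ref{wspread}) followed by the bootstrap showing $M_\ep$ is bounded, local lower bounds by $W$ (Theorem \ref{Wlbound}) fed into the multiparameter ergodic theorem for the global lower bound, and for the upper bound a periodic construction via Corollary 4.5 of \cite{SS2} (Proposition \ref{mesyoung}) with round droplets of the corrected radius $\bar r_\ep$. The only imprecision worth noting is that the displaced density $g_\ep$ is not made nonnegative but only bounded below by $-c\ln^2(M_\ep+2)$, and this bound must then be played against the a priori bound on $F^\ep$ to conclude $M_\ep$ is uniformly bounded (Proposition \ref{Mbound}), a bootstrap you allude to with ``each estimate feeds the other'' but do not quite spell out.
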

\noindent
We will prove that the minimum of $F^0$ is achieved. Moreover, it is
achieved for any $P \in \mathcal{P}$ which is concentrated on
minimizers of $\mathcal A_m$ with $m = 3^{-2/3} (\bar \delta - \bar
\delta_c)$.

\begin{remark}
  The phrasing of the theorem does not exactly fit the framework of
  $\Gamma$-convergence, since the lower bound result and the upper
  bound result are not expressed with the same notion of
  convergence. However, since weak convergence of $P_\ep$ to $P$
  implies weak convergence of $Q_\ep$ to $Q$, the theorem implies a
  result of $\Gamma$-convergence where the sense of convergence from
  $P_\ep$ to $P$ is taken to be the weak convergence of their
  push-forwards $Q_\ep$ to the corresponding $Q$.
\end{remark}

The next theorem expresses the consequence of Theorem
\ref{main} for almost minimizers:
\begin{theorem}\label{th2}
  Let $m = 3^{-2/3} (\bar \delta - \bar \delta_c)$ and let $(u^{\eps})
  \in \mathcal{A}$ be a family of almost minimizers of $F^0$, i.e.,
  let
$$
\lim_{\ep \to 0} F^{\varepsilon}[u^{\varepsilon}] = \min_{\mathcal{P}}
F^0.
$$ 
Then, if $P$ is the limit measure from Theorem \ref{main}, $P$-almost
every $\varphi$ minimizes $W$ over $\mathcal{A}_m$. In addition
\begin{align}
  \label{F0min}
  \min_{\mathcal{P}} F^0 = 3^{4/3} \min_{\mathcal A_m} W +
  \frac{3^{2/3} (\bar \delta - \bar \delta_c)}{8}.
\end{align}
\end{theorem}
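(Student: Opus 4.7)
The plan is to combine the two halves of Theorem~\ref{main} with the variational structure of $F^0$. Since $F^0[P] = 3^{4/3}\int W\,dP + \frac{3^{2/3}(\bar\delta-\bar\delta_c)}{8}$ and every $P \in \mathcal P$ is concentrated on $\mathcal A_m$, the pointwise bound $W(\varphi) \geq \min_{\mathcal A_m} W$ integrated against $P$ already gives
$$
F^0[P] \;\geq\; 3^{4/3}\min_{\mathcal A_m} W + \frac{3^{2/3}(\bar\delta - \bar\delta_c)}{8}
$$
for every $P \in \mathcal P$, with equality if and only if $P$-a.e.\ $\varphi$ minimizes $W$ over $\mathcal A_m$. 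This is the key observation that makes the two conclusions of the theorem equivalent.

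To turn this into (\ref{F0min}), I would exhibit at least one $P^* \in \mathcal P$ that achieves the bound. Property~1 of $W$ quoted from \cite{SS2} delivers a minimizer $\varphi^* \in \mathcal A_m$, but a single $\varphi^*$ is not yet a translation-invariant measure. I would build $P^*$ by the standard averaging procedure: let $P_R^*$ be the push-forward of the normalized Lebesgue measure on $K_R$ under the translation map $x \mapsto \varphi^*(x+\cdot)$; by translation invariance of $W$, which is immediate from Definition~\ref{Wvdef} together with (\ref{vequiv}), each $P_R^*$ is concentrated on minimizers, so $\int W\,dP_R^* = \min_{\mathcal A_m} W$. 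I would then extract a weak subsequential limit $P^*$ in the space of probability measures on $\dot{W}^{1,p}_{loc}(\mathbb R^2)$, verify translation invariance via the standard boundary estimate $|K_R \triangle (K_R - y)|/|K_R| \to 0$, and invoke lower semicontinuity of $W$ (from \cite{SS2}) to conclude that $\int W\,dP^* \leq \min_{\mathcal A_m} W$. The matching trivial bound then gives $F^0[P^*] = 3^{4/3}\min_{\mathcal A_m} W + \frac{3^{2/3}(\bar\delta-\bar\delta_c)}{8}$, which establishes (\ref{F0min}).

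With (\ref{F0min}) in hand, the almost-minimizer statement is immediate. If $(u^\eps) \in \mathcal A$ satisfies $\lim_{\eps \to 0} F^\eps[u^\eps] = \min_{\mathcal P} F^0$, part~(i) of Theorem~\ref{main} provides, along a subsequence, a limit $P \in \mathcal P$ with $F^0[P] \leq \liminf_{\eps\to 0} F^\eps[u^\eps] = \min_{\mathcal P} F^0$, so in fact $F^0[P] = \min_{\mathcal P} F^0$. The equality-case discussion from the first paragraph then forces $W(\varphi) = \min_{\mathcal A_m} W$ for $P$-a.e.\ $\varphi$, which is the first claim of the theorem.

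The principal obstacle is the construction of $P^*$: tightness of the averaged family $(P_R^*)$ in $\dot{W}^{1,p}_{loc}(\mathbb R^2)$ and lower semicontinuity of $\varphi \mapsto W(\varphi)$ under the relevant mode of convergence are both nontrivial, but they are exactly the ingredients developed in the analogous Ginzburg-Landau setting of \cite{SS2} and can be transferred here because $W$ and $\mathcal A_m$ are defined identically in both contexts. Everything else in the argument is soft: an integration against a probability measure, an application of Theorem~\ref{main}, and the measure-theoretic observation that a nonnegative integrand with vanishing integral vanishes almost everywhere.
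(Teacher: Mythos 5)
Your route to \eqref{F0min} is genuinely different from the paper's, and it is the different part that contains the gap. The paper never constructs a translation-invariant measure $P^*$ concentrated on minimizers of $W$. Instead it sandwiches: starting from a minimizer $\varphi^*$ of $W$, it invokes the periodic approximant $\varphi_R$ (Proposition~\ref{pw2}, i.e.\ Corollary~4.4 of \cite{SS2}) and runs the full test-function construction of Section~\ref{sec11} on $\nabla\varphi_R$ in place of $b_R$, getting $\limsup_\ep \min_{\mathcal A}F^\ep \le 3^{4/3}\min_{\mathcal A_m}W + \tfrac{3^{2/3}(\bar\delta-\bar\delta_c)}{8}$; it then applies Theorem~\ref{main}(i) to a sequence of $F^\ep$-minimizers and uses the trivial pointwise bound $\int W\,dP \ge \min W$ to get the reverse inequality $\liminf_\ep \min_{\mathcal A}F^\ep \ge \inf_{\mathcal P}F^0 \ge 3^{4/3}\min W + \ldots$. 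Squeezing gives \eqref{F0min}, and the achievement of $\min_{\mathcal P}F^0$ comes for free from the limit $P\in\mathcal P$ produced by Theorem~\ref{main}(i). Your equality-case argument for the ``$P$-a.e.\ minimizes'' claim is the same as the paper's.

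The gap is in the construction of $P^*$. You need two nontrivial facts that you attribute to \cite{SS2} without pinning them down: (a) the weak subsequential limit $P^*$ of the averaged push-forwards $P_R^*$ must again be concentrated on $\mathcal A_m$ --- but the density condition \eqref{Lamm} is an asymptotic constraint on a discrete set $\Lambda$, and it is not preserved under weak $\dot W^{1,p}_{loc}$ convergence of a sequence of translates of a single (generally non-periodic) $\varphi^*$; and (b) $\int W\,dP^* \le \liminf_R \int W\,dP_R^*$, which requires lower semicontinuity of $W$ on $\dot W^{1,p}_{loc}$. Since $W$ is defined by a $\limsup_{R\to\infty}$ in Definition~\ref{Wvdef}, LSC is far from automatic; what \cite{SS2} establishes is $\Gamma$-liminf control for the two-scale energies, not LSC of $\varphi\mapsto W(\varphi)$ in the topology you are using. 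The paper itself flags exactly this obstruction in the remark following Theorem~\ref{main} (``the formula in \eqref{F0min} is not totally obvious, since the probability measure concentrated on a single minimizer $\varphi \in \mathcal A_m$ of $W$ does not belong to $\mathcal P$''), and its proof strategy is deliberately designed to avoid constructing $P^*$ directly. As written, your argument relies on an averaging-and-limit step whose justification is the real content, and it is not filled in.
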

\noindent Note that the formula in \eqref{F0min} is not totally
obvious, since the probability measure concentrated on a single
minimizer $\varphi \in \mathcal A_m$ of $W$ does not belong to
$\mathcal P$.

The result in Theorem \ref{th2} allows us to establish the expansion
of the minimal value of the original energy $\mathcal E^\eps$ by
combining it with \eqref{F} and \eqref{EEEepsal}.

\begin{theorem} \textbf{\emph{(Asymptotic expansion of $\min \mathcal
      E^\eps$)}}
  \label{t:minEE}
  \noindent Let $V = \frac{9}{32} (1-u^2)^2$, $\kappa = \frac23$ and
  $m = 3^{-2/3} (\bar \delta - \bar \delta_c)$. Fix $\bar \delta >
  \bar \delta_c$ and $\ell > 0$, and let $\mathcal E^\eps$ be defined
  by (\ref{EE}) with $\bar u = \bar u^\eps$ from (\ref{ubeps}). Then,
  as $\eps \to 0$ we have
  \begin{align}
    \label{EEmin}
    \ell^{-2} \min \mathcal E^\eps = {\bar\delta_c \over 2 \kappa^2}
    (2 \bar \delta - \bar \delta_c) \eps^{4/3} |\ln \eps|^{2/3} - {1
      \over 4 \cdot 3^{1/3}} (\bar \delta - \bar \delta_c) \eps^{4/3}
    |\ln \eps|^{-1/3} (\ln |\ln \eps| + \ln 9) \nonumber \\+
    \eps^{4/3} |\ln \eps|^{-1/3}\( 3^{4/3} \ \min_{\mathcal A_m} \ W +
    \frac{3^{2/3} (\bar \delta - \bar \delta_c)}{8} \)+ o(\eps^{4/3}
    |\ln \eps|^{-1/3}).
  \end{align}
\end{theorem}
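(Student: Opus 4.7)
The plan is to assemble the expansion by chaining together three inputs: the $\Gamma$-equivalence \eqref{EEEepsal} between $\mathcal E^\eps$ and the sharp interface energy $E^\eps$, the algebraic definition \eqref{F} of the rescaled energy $F^\eps$, and Theorem \ref{th2}, which identifies $\lim_{\eps\to 0}\min F^\eps$ with $\min_{\mathcal P} F^0$. None of the three steps involves any new analysis beyond what is already available in the paper; the content of the theorem is precisely that the error terms introduced at each step do not contaminate the order $\eps^{4/3}|\ln\eps|^{-1/3}$ term we care about.

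First I would establish convergence of the minimal values, namely
\[
\lim_{\eps\to 0}\min F^\eps \;=\; \min_{\mathcal P} F^0
\;=\; 3^{4/3}\min_{\mathcal A_m}W+\frac{3^{2/3}(\bar\delta-\bar\delta_c)}{8}.
\]
The upper bound $\limsup_{\eps\to 0}\min F^\eps \le \min_{\mathcal P} F^0$ follows by taking a measure $P\in\mathcal P$ concentrated on minimizers of $W$ over $\mathcal A_m$ (which exist by property (1) of $W$ recalled after Definition \ref{Wvdef}) and applying the recovery-sequence statement of Theorem \ref{main}(ii) to obtain $(u^\eps)\in\mathcal A$ with $\limsup F^\eps[u^\eps]\le F^0[P]=\min_{\mathcal P}F^0$. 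The matching lower bound follows by choosing $(u^\eps)$ to be (almost) minimizers of $F^\eps$: thanks to the just-established upper bound they satisfy $\limsup F^\eps[u^\eps]<\infty$, so Theorem \ref{main}(i) applies and yields a limit $P\in\mathcal P$ with $\liminf F^\eps[u^\eps]\ge F^0[P]\ge \min_{\mathcal P}F^0$.

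Next I would invert the definition \eqref{F} to write, for every admissible $u$,
\[
\ell^{-2}E^\eps[u] \;=\; \eps^{4/3}|\ln\eps|^{2/3}\,\tfrac{\bar\delta_c}{2\kappa^2}(2\bar\delta-\bar\delta_c)
-\tfrac{\eps^{4/3}|\ln\eps|^{-1/3}}{4\cdot 3^{1/3}}(\bar\delta-\bar\delta_c)(\ln|\ln\eps|+\ln 9)
+\eps^{4/3}|\ln\eps|^{-1/3}F^\eps[u].
\]
Taking the infimum on both sides and plugging in the limit for $\min F^\eps$ established in the previous step gives the claimed expansion with $\min E^\eps$ on the left-hand side and an $o(\eps^{4/3}|\ln\eps|^{-1/3})$ remainder on the right. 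Finally, I would invoke \eqref{EEEepsal}: since $\min E^\eps = O(\eps^{4/3}|\ln\eps|^{2/3})$ by \eqref{minE}, the error $O(\eps^\alpha\min E^\eps)$ is $O(\eps^{4/3+\alpha}|\ln\eps|^{2/3})$, which is absorbed into the same $o(\eps^{4/3}|\ln\eps|^{-1/3})$ remainder. This yields \eqref{EEmin}.

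I do not expect any serious obstacle in this argument, because all of the hard analysis is packaged in Theorems \ref{main} and \ref{th2} and in the earlier $\Gamma$-equivalence \eqref{EEEepsal}. The only step requiring a bit of care is the convergence $\min F^\eps\to \min F^0$: this is not automatic from $\Gamma$-convergence alone because the lower bound in Theorem \ref{main}(i) requires an a priori bound on $F^\eps[u^\eps]$, but this a priori bound is provided, as noted above, by first constructing the recovery sequence from part (ii) applied to any minimizing $P$, and so the two halves of Theorem \ref{main} combine in the standard way to give convergence of infima. The only genuinely quantitative check is bookkeeping of the error orders, which is immediate from \eqref{minE}.
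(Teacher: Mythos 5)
Your overall architecture is exactly the paper's: Theorem~\ref{t:minEE} is obtained by combining the $\Gamma$-equivalence \eqref{EEEepsal}, the algebraic relation \eqref{F} between $F^\eps$ and $E^\eps$, and the value of $\min_{\mathcal P} F^0$ from Theorem~\ref{th2}; the paper itself presents the theorem as an immediate consequence of these three ingredients. Your bookkeeping of the error, namely that the $O(\eps^\alpha \min E^\eps) = O(\eps^{4/3+\alpha}|\ln\eps|^{2/3})$ term is $o(\eps^{4/3}|\ln\eps|^{-1/3})$ since $\eps^\alpha|\ln\eps|\to 0$, is also correct.

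The one questionable point is in your re-derivation of $\lim_{\eps\to 0}\min F^\eps = \min_{\mathcal P} F^0$, where you justify the upper half by ``taking a measure $P\in\mathcal P$ concentrated on minimizers of $W$ over $\mathcal A_m$ (which exist by property~(1) of $W$).'' Property~(1) gives only the existence of a minimizing $\varphi\in\mathcal A_m$; it does not give the existence of a \emph{translation-invariant} probability measure concentrated on the set of minimizers, which is what membership in $\mathcal P$ requires. The paper explicitly flags this subtlety right after Theorem~\ref{th2}, noting that the probability measure concentrated on a single minimizer does not belong to $\mathcal P$. The paper's actual proof of \eqref{F0min} (Section~\ref{sec:proof-theorem-refth2}) sidesteps this by approximating a $W$-minimizer by $2R$-periodic configurations via Proposition~\ref{pw2}, constructing the recovery sequence $u^\eps$ from the periodic field directly, and then squeezing between the resulting upper bound and the lower bound of Theorem~\ref{main}(i). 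Your step is easily repaired: either invoke \eqref{F0min} and the existence of a minimizing $P$ for $F^0$ (both proved in the paper), or replace your choice of $P$ by any $P$ achieving $\min_{\mathcal P} F^0$; but as written, the appeal to property~(1) does not justify the existence of such a $P$.
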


As mentioned above, the $\Gamma$-limit in Theorem \ref{main} cannot be
expressed in terms of a single limiting function $\vp$, but rather it
effectively averages $W$ over all the blown-up limits of $\vp^\ep$,
with respect to all the possible blow-up centers.  Consequently, for
almost minimizers of the energy, we cannot guarantee that each
blown-up potential $\vp^\ep$ converges to a minimizer of $W$, but only
that this is true after blow-up except around points that belong to a
set with asymptotically vanishing volume fraction.  Indeed, one could
easily imagine a configuration with some small regions where the
configuration does not ressemble any minimizer of $W$, and this would
not contradict the fact of being an almost minimizer since these
regions would contribute only a negligible fraction to the energy.
Near all the good blow-up centers, we will know some more about the
droplets: it will be shown in Theorem \ref{Wlbound} that they are
asymptotically round and of optimal radii.
    
We finish this section with a short sketch of the proof. Most of the
proof consists in proving the lower bound, i.e. Part (i) of Theorem
\ref{main}.  The first step, accomplished in Section \ref{setup} is,
following the ideas of \cite{m:cmp10}, to extract from $F^\ep$ some
positive terms involving the sizes and shapes of the droplets and
which are minimized by round droplets of fixed appropriate
radius. These positive terms, gathered in what will be called $M_\ep$,
can be put aside and will serve to control the discrepancy between the
droplets and the ideal round droplets of optimal sizes.  We then
consider what remains when this $M_\ep$ is subtracted off from $F^\ep$
and express it in blown-up coordinates $x'= x \sqrt{\lep}$. It is then
an energy functional, expressed in terms of some rescaling of
$\vp^\ep$ which has no sign and which ressembles that studied in
\cite{SS2}.  Thus we apply to it the strategy of \cite{SS2}. The main
point is to show that, even though the energy density is not bounded
below, it can be transformed into one that is by absorbing the
negative terms into positive terms in the energy in the sense of
energy displacement \cite{SS2}, while making only a small error. In
order to prove that this is possible, we first need to establish sharp
lower bounds for the energy carried by the droplets (with an error
$o(1)$ per droplet).  These lower bounds contain possible errors which
will later be controlled via the $M_\ep$ term.  This is done in
Section \ref{sec4} via a ball construction as in
\cite{jerrardball,sandierball,SS3}.  In Section \ref{sec5} we use
these lower bounds to perform the energy displacement as in
\cite{SS2}. Once the energy density has been replaced this way by an
essentially equivalent energy density which is bounded below, we can
apply the abstract scheme of \cite{SS2} that serves to obtain lower
bounds for ``two-scale energies'' which $\Gamma$-converge at the
microscopic scale, via the multiparameter ergodic theorem. This is
achieved is Section \ref{sec6}.  Prior to this we obtain explicit
lower bounds at the microscopic scale in terms of the renormalized
energy for a finite number of points. It is then these lower bounds
that get integrated out, or averaged out at the macroscopic scale to
provide a global lower bound.

Finally, there remains to obtain the corresponding upper bound. This
is done via an explicit construction of a periodic test-configuration,
following again the method of \cite{SS2}.

\section{Derivation of the leading order energy}\label{setup}

In preparation for the proof of Theorem \ref{main}, we define
\begin{align}
  \rho_{\eps} := 3^{1/3}\eps^{1/3} |\ln \eps|^{1/6} \;\;\textrm{ and
  }\;\;\; \bar r_{\varepsilon} := \(\frac{|\ln \varepsilon|}{|\ln
    \rho_{\eps}|}\)^{1/3} .\label{newRdef1}
\end{align}
Recall that to leading order the droplets are expected to be circular
with radius $3^{1/3} \ep^{1/3} \lep^{-1/3}$. Thus $\ro_{\eps}$ is the
expected radius, once we have blown up coordinates by the factor of
$\sqrt{|\ln \eps|}$, which will be done below. Also, we know that the
expected normalized area $A_i$ is $3^{2/3} \pi$, but this is only true
up to lower order terms which were negligible in \cite{SCD}; as we
show below, a more precise estimate is $A_i \simeq \pi \bar
r_{\varepsilon}^2$, so $\bar{r}_\ep$ above can be viewed as a
``corrected'' normalized droplet radius.  Since our estimates must be
accurate up to $o_{\eps}(1)$ per droplet and the self-energy of a
droplet is of order $A_i^2 \ln \ro_\ep $, we can no longer ignore
these corrections.


The goal of the next subsection is to obtain an explicit lower bound
for $F_{\eps}$ defined by \eqref{F} in terms of the droplet areas and
perimeters, which will then be studied in Sections \ref{sec4} and
onward. We follow the analysis of \cite{SCD}, but isolate higher order
terms.

\subsection{Energy extraction}

We begin with the original energy $\bar E^{\eps}$ (cf. \eqref{Ebar})
while adding and subtracting the \emph{truncated} self interaction:
first we define, for $\gamma \in (0,1)$, truncated droplet volumes by
\begin{equation} \label{P1.N34}  \tilde A_i^{\ep} :=
  \begin{cases}
    A_i^{\ep} & \mathrm{if} \  A_i^{\ep} <  3^{2/3} \pi \gamma^{-1}, \\
    (3^{2/3} \pi \gamma^{-1} |A_i^{\ep}|)^{1/2} & \mathrm{if} \
    A_i^{\ep} \geq 3^{2/3} \pi \gamma^{-1},
  \end{cases}
\end{equation}
as in \cite{SCD}. The motivation for this truncation will become clear
in the proof of Proposition~\ref{wspread}, when we obtain lower bounds
on the energy on annuli.  In \cite{SCD} the self-interaction energy of
each droplet extracted from $\bar E^{\eps}$ was $\frac{|\tilde
  A_i^{\ep}|^2}{3\pi|\ln \varepsilon|}$, yielding in the end the
leading order energy $E^0[\mu]$ in \eqref{E0mu}. A more precise
calculation of the self-interaction energy corrects the coefficient of
$|\tilde A_i^\ep|^2$ by an $O(\ln |\ln \eps| / |\ln \eps|)$ term,
yielding the following corrected leading order energy for $E^\ep$:
\begin{equation}\label{RZorder}
  E_{\eps}^0[\mu] := \frac{\bar \delta^2 \ell^2}{2\kappa^2} +
  \(\frac{3}{\bar r_{\eps}} - \frac{2\bar 
    \delta}{\kappa^2}\)\int_{\mathbb{T}_{\ell}^2} d\mu + 2
  \iint_{\mathbb{T}_{\ell}^2 \times \mathbb{T}_{\ell}^2} G(x-y)
  d\mu(x)d\mu(y).
\end{equation}
The energy in \eqref{RZorder} is explicitly minimized by $d\mu(x) =
\bar\mu_\eps \, dx$ (again a correction to the previously known
$\bar{\mu}$ from \eqref{mubar}) where
\begin{align}\label{mueps}
  \bar\mu_\eps := \frac12 \left( \bar \delta - {3 \kappa^2 \over 2
      \bar r_\eps} \right) \qquad \text{for} \qquad \bar \delta > {3
    \kappa^2 \over 2 \bar r_\eps},
\end{align}
and 
\begin{align} \label{3.6}
  \min E^0_\eps = {\bar \delta_c \ell^2 \over 2 \kappa^2 } \left\{ 2 \bar
    \delta \left( { 3 \over \bar r_\eps^3} \right)^{1/3} - \bar
    \delta_c \left( { 3 \over \bar r_\eps^3} \right)^{2/3} \right\}.
\end{align}
Observing that $\bar r_\ep \to 3^{1/3}$ we immediately check that 
\begin{equation}\label{convermubar}
  \bar \mu_\ep\to \bar \mu \quad \text{as} \ \ep \to 0,
\end{equation}
and in addition that \eqref{3.6} converges to the second expression in
\eqref{mubar}. To obtain the next order term, we Taylor-expand the
obtained formulas upon substituting the definition of
$\bar{r}_\ep$. After some algebra, we obtain
\begin{align}
  \ell^{-2} \min E^0_\eps = {\bar \delta_c \over 2 \kappa^2} \left( 2
    \bar \delta - \bar \delta_c \right) - {1 \over 4 \cdot 3^{1/3}} (
  \bar \delta - \bar \delta_c) {\ln |\ln \eps| + {\ln 9} \over |\ln
    \eps|} + O \left( {(\ln |\ln \eps|)^2 \over |\ln \eps|^2} \right).
\end{align}
Recalling once again the definition of $F^{\eps}$ from \eqref{F}, we
then find
$$ F^\ep[u^\ep]= \lep \( \ep^{-4/3} \lep^{-2/3} \ell^{-2}
E^\ep[u^\eps] - \ell^{-2} \min E_\ep^0 \) + O\( {(\ln |\ln \eps|)^2
  \over |\ln \eps|} \right),
$$
and in view of the definition of $\bar E^{\eps}$ from \eqref{EEE}, we
thus may write
\begin{align}\label{Eexpan}
  F^\ep[u^\ep]= |\ln \eps|\ell^{-2} \( \bar E^{\eps}[u^{\eps}] +
  \frac{ \bar \delta^2\ell^2}{2\kappa^2} -\min E^0_\eps\) + O \left(
    {(\ln |\ln \eps|)^2 \over |\ln \eps|} \right).
\end{align}
Thus obtaining a lower bound for the first term in the right-hand side
of \eqref{Eexpan} implies, up to $o_{\eps}(1)$, a lower bound for
$F^{\eps}$. This is how we proceed to prove Lemma \ref{lem1} below.

With this in mind, we begin by setting
\begin{equation}\label{hdeffirst}
  v^{\eps} = \bar v^{\ep} + \frac{h_{\eps}}{|\ln \eps|}, \qquad \bar v^{\ep} =
  \frac{1}{2\kappa^2} \(\bar \delta - \frac{3 \kappa^2}{2\bar
    r_{\eps}}\),
\end{equation}
where $\bar v^{\ep}$ is the solution to \eqref{vdef} with right side
equal to $\bar \mu_\eps$ in \eqref{mueps}.

\subsection{Blowup of coordinates}

We now rescale the domain $\mathbb{T}_{\ell}^2$ by making the change
of variables
\begin{align} \nonumber x' &= x \sqrt{|\ln \varepsilon|}, \\
  \label{231} \h(x') &= h_{\varepsilon}  (x), \\
  \nonumber \Omega_{i,\eps}' &={\Omega}_i^\eps \sqrt{\lep}, \\
  \nonumber \ell^{\eps} &= \ell \sqrt{\lep}.
\end{align}
Observe that
\begin{equation}\label{phih}
  \varphi^{\eps}(x') = 2 \cdot 3^{-2/3} \h (x') \qquad
  \forall x' \in \mathbb{T}_{\ell^\ep}^2,
\end{equation}
where $\varphi^{\eps}$ is defined by \eqref{phiepsdef}. It turns out
to be more convenient to work with $\h$ and rescale 
only at the end back to $\varphi^{\eps}$.

\subsection{Main result}

We are now ready to state the main result of this section, which
provides an explicit lower bound on $F^{\eps}$. The strategy, in
particular for dealing with droplets that are too small or too large
is the same as \cite{SCD}, except that we need to go to higher order
terms.

\begin{pro} \label{lem1} There exist universal constants $\gamma \in
  (0, \frac16)$, $ c_1>0, c_2>0$, $c_3>0$ and $\eps_0 > 0$ such that
  if $\bar\delta >\bar\delta_c$ and $(u^{\eps}) \in \mathcal{A}$ with
  $\Omega^{\eps} := \{u^{\eps} > 0\}$, then for all $\ep < \eps_0$
\begin{align}\label{minoF}
  \ell^2 F^{\eps}[u^{\eps}] \geq M_\ep + \frac{2}{|\ln \varepsilon|}
  \int_{\mathbb{T}_{\ell^\ep}^2}\( |\nabla h_{\varepsilon}'|^2 +\frac{
    \kappa^2}{\lep} |h_{\varepsilon}'|^2 \)dx' - \frac{1}{\pi \bar
    r_{\eps}^3 } \sum_{A_i^{\ep} \geq 3^{2/3} \pi \gamma} |\tilde
  A_i^{\ep}|^2 +o_\ep(1),
\end{align} 
where $M_\ep \ \geq 0$ is defined by
\begin{multline}\label{defM}
  M_{\varepsilon} := \sum_{i} \left( P_i^\ep - \sqrt{4 \pi A_i^{\ep}}
  \right) + c_1 \sum_{A_i^{\ep} > 3^{2/3} \pi \gamma^{-1} } A_i^{\ep}
  \\ + c_2\sum_{3^{2/3} \pi \gamma \leq A_i^{\ep} \leq 3^{2/3} \pi
    \gamma^{-1} }(A_i^{\ep} - \pi \bar r_{\eps}^2)^2+
  c_3 \sum_{A_i^{\ep} < 3^{2/3} \pi \gamma} A_i^{\ep}.
\end{multline}
\end{pro}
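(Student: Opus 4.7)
\textbf{Proof plan for Proposition \ref{lem1}.} By \eqref{Eexpan} it suffices to lower bound $\bar E^\eps[u^\eps] + \frac{\bar\delta^2\ell^2}{2\kappa^2} - \min E^0_\eps$ by the quantities on the right of \eqref{minoF} (divided by $\ell^2/|\ln\eps|$), up to an $O((\ln|\ln\eps|)^2/|\ln\eps|^2)$ error. First I rewrite the double integral in \eqref{Ebar} via the elliptic identity
$$
\iint_{\TT\times\TT} G(x-y)\,d\mu^\eps(x)\,d\mu^\eps(y) = \int_{\TT}\bigl(|\nabla v^\eps|^2 + \kappa^2|v^\eps|^2\bigr)\,dx,
$$
and substitute the splitting \eqref{hdeffirst}. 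The constant piece $\kappa^2(\bar v^\eps)^2 \ell^2$ combines with $\bar\delta^2\ell^2/(2\kappa^2)$ and with the linear terms $|\ln\eps|^{-1}\sum_i(P_i^\eps - 2\bar\delta \kappa^{-2}A_i^\eps)$ of $\bar E^\eps$ to precisely reconstruct $\min E^0_\eps$ from \eqref{3.6}, using $\mu^\eps(\TT)=|\ln\eps|^{-1}\sum_i A_i^\eps$ and the definition \eqref{mueps} of $\bar\mu_\eps$. The linear cross term $2\kappa^2\bar v^\eps|\ln\eps|^{-1}\int h_\eps\,dx$ cancels by the choice of $\bar v^\eps$, and the leftover quadratic piece $|\ln\eps|^{-2}\int(|\nabla h_\eps|^2+\kappa^2 h_\eps^2)\,dx$ becomes, after the blow-up \eqref{231} (which gives $dx = |\ln\eps|^{-1}dx'$ and $|\nabla h_\eps|^2 = |\ln\eps|\,|\nabla \h|^2$), exactly the $\h$-term displayed in \eqref{minoF}.

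Next I extract, droplet by droplet, a precise self-interaction. Writing $G(x) = -\tfrac{1}{2\pi}\ln|x| + g(x)$ with $g$ smooth and bounded on $\TT$, a direct calculation using that $\Omega_i^\eps$ is ``close to a disk of area $|\Omega_i^\eps|$'' (up to an error controlled by $P_i^\eps - \sqrt{4\pi A_i^\eps}$ via quantitative isoperimetry) yields
$$
\iint_{\Omega_i^\eps\times\Omega_i^\eps} G(x-y)\,dx\,dy = \tfrac{|\Omega_i^\eps|^2}{4\pi}\ln\tfrac{1}{|\Omega_i^\eps|} + O(|\Omega_i^\eps|^2).
$$
In rescaled variables this contributes $\frac{|A_i^\eps|^2}{3\pi|\ln\eps|}$ plus a $\ln|\ln\eps|/|\ln\eps|$ correction that is precisely the reason for the $\ln|\ln\eps|$ subtraction built into $F^\eps$ in \eqref{F}. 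For overgrown droplets the disk-approximation breaks down, so I replace $A_i^\eps$ by the truncation $\tilde A_i^\eps$ of \eqref{P1.N34}; the mismatch shows up as the negative term $-(\pi\bar r_\eps^3)^{-1}\sum|\tilde A_i^\eps|^2$ in \eqref{minoF}, and will be absorbed later (cf.\ the lower bounds on annuli of Proposition~\ref{wspread}). The sharp isoperimetric inequality $P_i^\eps \geq \sqrt{4\pi A_i^\eps}$ already produces the first contribution $\sum_i(P_i^\eps - \sqrt{4\pi A_i^\eps})$ to $M_\eps$.

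There remains to distribute the residual per-droplet energy $f_\eps(A_i^\eps) := \sqrt{4\pi A_i^\eps} - \frac{2\bar\delta}{\kappa^2}A_i^\eps - \frac{\tilde A_i^\eps{}^2}{\pi\bar r_\eps^3}$ according to the trichotomy. The function $f_\eps$ is smooth and strictly concave on the middle range with a unique maximum at $A = \pi\bar r_\eps^2$, where $f_\eps$ vanishes (this is exactly the computation that made $\bar\mu_\eps$ minimize $E^0_\eps$). Thus in the medium range $3^{2/3}\pi\gamma \leq A_i^\eps \leq 3^{2/3}\pi\gamma^{-1}$, a second-order Taylor expansion around $\pi\bar r_\eps^2$ produces the term $c_2(A_i^\eps - \pi\bar r_\eps^2)^2$ with a constant $c_2 > 0$ uniform in $\eps$. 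For small droplets ($A_i^\eps < 3^{2/3}\pi\gamma$) the square root $\sqrt{4\pi A_i^\eps}$ dominates all linear and quadratic contributions provided $\gamma$ is chosen small enough, yielding $c_3 A_i^\eps$. For large droplets ($A_i^\eps > 3^{2/3}\pi\gamma^{-1}$) the truncation $\tilde A_i^\eps \sim \sqrt{A_i^\eps}$ makes the self-interaction sub-linear while the isoperimetric term still grows like $\sqrt{A_i^\eps}$, and comparison of linear vs.\ square-root for large arguments yields $c_1 A_i^\eps$.

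The main obstacle is the precise bookkeeping in the self-interaction step: the $\ln|\ln\eps|/|\ln\eps|$ correction to the self-energy must be reconciled, via the Taylor expansion around $\pi\bar r_\eps^2$, with the $\ln|\ln\eps|$ subtraction in the definition \eqref{F} of $F^\eps$, so that the medium-range penalty $c_2(A_i^\eps - \pi\bar r_\eps^2)^2$ has a universal constant independent of $\eps$. This forces us to work with $\bar r_\eps$ rather than its limit $3^{1/3}$ throughout, and is the reason why a direct use of the $\Gamma$-limit from \cite{SCD} is insufficient.
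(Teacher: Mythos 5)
Your first paragraph — rewriting $\iint G\,d\mu\,d\mu$ via the elliptic identity, substituting $v^\eps=\bar v^\eps+h_\eps/|\ln\eps|$, reconstructing $\min E^0_\eps$ from the constant and linear pieces, and rescaling to get the $\h$-term — matches Step 1 of the paper and is correct.

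The second and third paragraphs, however, contain genuine problems. First, the paper never estimates $\iint_{\Omega_i^\eps\times\Omega_i^\eps}G(x-y)\,dx\,dy$. Instead it simply \emph{adds and subtracts} the prescribed quantity $\frac{1}{\pi\bar r_\eps^3|\ln\eps|}\sum_i|\tilde A_i^\eps|^2$ from $\bar E^\eps$; the added copy is bounded below droplet-by-droplet (producing the penalties in $M_\eps$), and the subtracted copy is left in place as the negative term of \eqref{minoF}. No shape information or quantitative isoperimetry enters at this stage, so the estimate you invoke — which would anyway require the droplets to be close to disks, and which gives an upper rather than lower bound on $\iint G$ by Riesz rearrangement — is both unnecessary and not available a priori. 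Second, there is a sign error in your residual: you define $f_\eps(A)=\sqrt{4\pi A}-\tfrac{2\bar\delta}{\kappa^2}A-\tfrac{\tilde A^2}{\pi\bar r_\eps^3}$, but the quantity that the paper Taylor-expands is $\sqrt{4\pi A}-\tfrac{2\bar\delta}{\kappa^2}A+\tfrac{\tilde A^2}{\pi\bar r_\eps^3}$, equivalently $A\bigl(f(A)-\tfrac{2\bar\delta}{\kappa^2}\bigr)$ with $f(x)=\tfrac{2\sqrt\pi}{\sqrt x}+\tfrac{x}{\pi\bar r_\eps^3}$, which is \emph{convex} with a \emph{minimum} $\tfrac{3}{\bar r_\eps}$ at $x=\pi\bar r_\eps^2$. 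With your minus sign, the critical point is at $\pi\bar r_\eps^2/4^{1/3}$, not $\pi\bar r_\eps^2$, and the function does not vanish there; the subsequent Taylor argument breaks down. Finally, you do not address the paper's Step 2: the negative term $-\tfrac{1}{\pi\bar r_\eps^3}\sum_i|\tilde A_i^\eps|^2$ initially runs over \emph{all} $i$, and the contributions of droplets with $A_i^\eps<3^{2/3}\pi\gamma$ must be absorbed into the small-droplet quadratic penalty by a universal choice of $\gamma$ (the comparison $\Phi_\ep(x)\ge x$ for $0\le x<3^{2/3}\pi\gamma$) before the restricted sum $\sum_{A_i^\eps\ge 3^{2/3}\pi\gamma}|\tilde A_i^\eps|^2$ in \eqref{minoF} can appear.
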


\begin{remark}
  \label{rem-Meps}
  Defining $\beta := 3^{2/3} \pi \gamma$, by isoperimetric inequality
  applied to each connected component of $\Omega^{\eps}$ separately every
  term in the first sum in the definition of $M_\eps$ in \eqref{defM}
  is non-negative. In particular, $M_\eps$ measures the discrepancy
  between the droplets $\Omega_i^{\eps}$ with $A_i^\eps \geq \beta$
  and disks of radius $\bar r_{\eps}$.
\end{remark}

The proposition will be proved below, but before let us examine some
of its further consequences.  The result of the proposition implies
that our a priori assumption $\limsup_{\eps \to 0} F^\eps[u^{\eps}] <
+\infty$ translates into
$$
M_\ep + \frac{2}{\lep} \int_\TTT \( |\nabla h_{\varepsilon}'|^2 +
\frac{\kappa^2 }{\lep} |h_{\varepsilon}'|^2 \) dx' - \frac{1}{\pi
  \bar{r}_\ep^3} \sum_{A_i^\ep \geq \beta} |\tilde A_i^{\ep}|^2 \le C,
$$
for some $C > 0$ independent of $\eps \ll 1$, which, in view of
\eqref{newRdef1} is also
\begin{align}\label{aprioribound}
  M_\ep + \frac{2}{\lep}\( \int_\TTT \( |\nabla h_{\varepsilon}'|^2 +
  \frac{\kappa^2 }{\lep} |h_{\varepsilon}'|^2 \) dx' - \frac{1}{2
    \pi}|\ln \rho_{\eps}| \sum_{A_i^\ep \geq \beta} |\tilde
  A_i^{\ep}|^2\) \leq C.
\end{align}
A major goal of the next sections is to obtain the following estimate
\begin{equation}\label{aprouver} 
  \frac{1}{\lep}\(  \int_\TTT \( |\nabla h_{\varepsilon}'|^2 +
  \frac{\kappa^2 }{\lep} |h_{\varepsilon}'|^2 \) dx' - \frac{1}{2
    \pi}|\ln \rho_{\eps}| \sum_{A_i^\ep \geq 
    \beta} |\tilde A_i^{\ep}|^2 \) \ge -C \ln^2 (M_\eps +2), 
\end{equation} 
for some $C > 0$ independent of $\eps \ll 1$, so that the a priori
bound (\ref{aprioribound}) in fact implies that $M_{\varepsilon}$ is
uniformly bounded independently of $\varepsilon$ for small
$\eps$. This will be used crucially in Section
\ref{LboundW}. 

We note that $h_\ep'(x')$ satisfies the equation
\begin{align}\label{heqn1} 
  -\Delta \h + \frac{\kappa^2}{|\ln \varepsilon|} \h =
  \mu_{\varepsilon}' - \bar \mu^{\eps}\quad \text{in} \ W^{2,p}(\TTT)
\end{align}
where we define in $\TTT$
\begin{align}
  \label{rescaledmu} \mu_{\varepsilon}' (x') := \sum_{i} A_i^{\ep}
  \tilde \delta_i^\eps(x'), 
\end{align}
and
\begin{align}
  \label{fakedirac} \tilde \delta_i^\eps (x') := \frac{
    \chi_{\Omega_{i,\eps}'} (x')}{|\Omega_{i,\eps}'|},
\end{align}
which will be used in what follows.  Notice that each $\tilde
\delta_i^\eps(x')$ approximates the Dirac delta concentrated on some
point in the support of $\Omega_{i,\eps}'$ and, hence, $\mu_\eps'(x')
dx'$ approximates the measure associated with the collection of point
charges with magnitude $A_i^\eps$. In particular, the measure
$d\mu_\eps'$ evaluated over the whole torus equals the total charge:
$\mu_\eps'(\TTT) = \sum_i A_i^\eps$.

\subsection{Proof of Proposition \ref{lem1}}

{- \it Step 1:} We are first going to show that for universally small
$\eps>0$ and all $\gamma \in (0,\frac16)$ we have
\begin{equation}\label{lemma1}
\ell^2 F^{\eps}[u^{\eps}] \geq T_1 + T_2+ T_3+ T_4 + T_5 +o_\ep(1),
\end{equation}
where 
\begin{align}
  T_1 & = \sum_{i} \left( P_i^{\ep} - \sqrt{4 \pi A_i^\ep} \right),
\label{vo0}
\\
T_2 & = \frac{\gamma^{7/2}}{4\pi}\sum_{3^{2/3} \pi \gamma \leq
  A_i^{\ep} \leq 3^{2/3} \pi \gamma^{-1} } (A_i^{\ep}-\pi \bar
r_{\eps}^2)^2, \label{voldef1} \\
T_3 &=
\label{voldef2} \frac{\gamma^{-5/2}}{4\pi^2 \cdot
  3^{2/3}}\sum_{A_i^{\ep} < 3^{2/3} \pi \gamma} A_i^{\ep} (A_i^{\ep} -
\pi \bar r_{\eps}^2)^2,\\
T_4 & = \label{volterm3} \sum_{A_i^{\ep} > 3^{2/3} \pi \gamma^{-1}}
\(6^{-1} \gamma^{-1} - 1 \)A_i^{\ep}, \\
T_5 & = \label{hterm1} \frac{2}{|\ln \varepsilon|}
\int_{\mathbb{T}_{\ell}^2} \(|\nabla h_{\varepsilon}|^2 + \kappa^2
|h_{\varepsilon}|^2\)dx - \frac{1}{\pi \bar r_{\eps}^3 } \sum_{i}
|\tilde A_i^{\ep}|^2.
\end{align}
To bound $F^\ep [u^\eps]$ from below, we start from \eqref{Eexpan}.
In particular, in view of \eqref{vdef} we may rewrite \eqref{Ebar} as
\begin{align}
  \bar E^\eps[u^\eps] & = {{1 \over |\ln \eps|} \sum_i \left( P_i^\eps
      - {2 \bar \delta \over \kappa^2} A_i^\eps \right) + 2 \int_\TT
    \left( |\nabla v^\eps|^2 + \kappa^2 |v^\eps|^2 \right)
    dx} \notag \\
  \nonumber & = \frac{1}{|\ln \varepsilon|} \sum_{i}
  \left( P_i^{\ep} - \sqrt{4 \pi A_i^{\ep}} \right) \\
  &+\label{E1} \frac{1}{|\ln \varepsilon|} \sum_{i}\( \sqrt{4 \pi
    A_i^{\ep}} - \frac{2\bar \delta}{\kappa^2} A_i^{\ep} +
  \frac{1}{\pi \bar r_{\eps}^3} |\tilde
  A_i^{\ep}|^2 \)  \\
  &+\label{E2a} 2 \int_{\mathbb{T}_{\ell}^2} \left( |\nabla
    v^{\varepsilon}|^2 + \kappa^2 |v^{\varepsilon}|^2 \right) dx -
  \frac{1}{\pi \bar r_{\eps}^3 |\ln \eps|} \sum_{i} |\tilde
  A_i^{\ep}|^2.
\end{align}
We start by focusing on \eqref{E1}.  First, in the case $A_i^{\ep} >
3^{2/3} \pi \gamma^{-1}$ we have $|\tilde A_i^{\ep}|^2 = 3^{2/3} \pi
\gamma^{-1} A_i^{\ep}$ and hence, recalling that $\bar{r}_\ep =
3^{1/3}+o_\ep(1)$, where $o_\eps(1)$ depends only on $\eps$, we have
for $\ep$ universally small and $\gamma < \frac16$:
\begin{align}\nonumber 
  \frac{|\tilde A_i^{\eps}|^2}{\pi \bar r_{\eps}^3} =
  \frac{A_i^{\eps}}{\pi \bar r_{\eps}^3} \(3^{2/3} \pi \gamma^{-1} -
  3\pi \bar r_{\eps}^2 + 3 \pi \bar r_{\eps}^2\)&= A_i^{\eps}
  \(\frac{3}{\bar r_{\eps}} + \frac{3^{2/3}}{\bar r_{\eps}^3} \(
  \gamma^{-1} - 3 \left( {\bar r_{\eps} \over
      3^{1/3}} \right)^2\)\)\\
 \label{I1.2}
 &\geq A_i^{\eps}\(\frac{3}{\bar r_{\eps}} + \frac{1}{6}
 \(\gamma^{-1}- 6\)\).
\end{align}
We conclude that for $A_i^\ep > 3^{2/3} \pi \gamma^{-1}$, we have
\begin{align}\label{conclu1}
  \( \sqrt{4 \pi A_i^{\ep}} + \frac{|\tilde A_i^{\ep}|^2}{\pi \bar
    r_{\eps}^3} - \frac{2\bar \delta}{\kappa^2} A_i^{\ep}\) \ge
  \left( \frac{3}{\bar r_{\ep}} - \frac{2 \bar \delta}{\kappa^2} 
  + \frac{1}{6} \( \gamma^{-1} - 6 \) \right) A_i^{\ep}. 
\end{align}

On the other hand, when $A_i^{\ep} \leq 3^{2/3} \pi \gamma^{-1}$ we
have $\tilde A_i^{\ep} =A_i^{\ep}$ and we proceed as follows.  Let us
begin by defining, similarly to \cite{SCD}, the function
\[
f(x) = \frac{2 \sqrt{\pi}}{\sqrt{x}} + \frac{x}{\pi \bar r_{\eps}^3}
\]
for $x \in (0,+\infty)$ and observe that $f$ is convex and attains its
minimum of $\frac{3}{\bar r_{\eps}}$ at $x= \pi \bar
r_{\varepsilon}^2$, with
\begin{equation*} 
  f''(x) = \frac{3 \sqrt{\pi}}{2 x^{5/2}} >  0.
\end{equation*} 
By a second order Taylor expansion of $f$ around $ \pi \bar
r_{\varepsilon}^2$, using the fact that $f''$ is decreasing on $(0, +
\infty)$, we then have for all $x \leq x_0$
\begin{equation}\label{LBcase1a}
  \sqrt{4 \pi  x}
  + \frac{x^2}{\pi \bar r_{\eps}^3 }  = x f(x)
  \geq  x \( \frac{3}{\bar r_{\eps}} + {3 \sqrt{\pi} \over 4
    x_0^{5/2}}  \(x - \pi \bar
  r_{\varepsilon}^2  \)^2 \).
\end{equation} 
We, hence, conclude that when $3^{2/3} \pi \gamma \leq A_i^\ep \leq
3^{2/3} \pi \gamma^{-1}$, we have
\begin{align}\label{conclu2}
  \sqrt{4 \pi A_i^{\ep}} + \frac{|\tilde A_i^{\ep}|^2}{\pi \bar
    r_{\eps}^3} - \frac{2\bar \delta}{\kappa^2} A_i^{\ep} \ge
  \(\frac{3}{\bar r_{\ep}} - \frac{2 \bar \delta}{\kappa^2}\)
  A_i^{\ep} + \frac{\gamma^{5/2}}{4\pi^2 \cdot 3^{2/3}} A_i^{\ep}
  (A_i^{\ep} - \pi \bar r_{\varepsilon}^2)^2,
\end{align}
and when $A_i^\eps < 3^{2/3} \pi
\gamma$, we have
\begin{align}
  \label{conclu3}
  \sqrt{4 \pi A_i^{\ep}} + \frac{|\tilde A_i^{\ep}|^2}{\pi \bar
    r_{\eps}^3} - \frac{2\bar \delta}{\kappa^2} A_i^{\ep} \ge
  \(\frac{3}{\bar r_{\ep}} - \frac{2 \bar \delta}{\kappa^2}\)
  A_i^{\ep} + \frac{\gamma^{-5/2}}{4\pi^2 \cdot 3^{2/3}} A_i^{\ep}
  (A_i^{\ep} - \pi \bar r_{\varepsilon}^2)^2,
\end{align}

Combining \eqref{conclu1}, \eqref{conclu2} and \eqref{conclu3},
summing over all $i$, and distinguishing the different cases, we can
now bound (\ref{E1}) from below as follows:
\begin{align}
  \sum_{i} \( \sqrt{4 \pi A_i^{\ep}} - \frac{2\bar \delta}{\kappa^2}
  A_i^{\ep} + \frac{1}{\pi \bar r_{\eps}^3} |\tilde A_i^{\ep}|^2 \) &
  \geq \(\frac{3}{\bar r_{\ep}} - \frac{2 \bar \delta}{\kappa^2}\)
  \sum_i
  A_i^{\ep} \notag \\
  &+ \frac{\gamma^{7/2}}{4\pi}\sum_{ 3^{2/3} \pi \gamma \leq A_i^{\ep}
    \leq 3^{2/3} \pi \gamma^{-1} } (A_i^{\ep}-\pi \bar
  r_{\varepsilon}^2)^2 \nonumber \\ &+ \frac{\gamma^{-5/2}}{4\pi^2
    \cdot 3^{2/3}}\sum_{A_i^{\ep} < 3^{2/3} \pi \gamma} A_i^{\ep}
  (A_i^{\ep}
  - \pi \bar r_{\varepsilon}^2)^2 \nonumber \\
  &+ \label{volterm31} \sum_{A_i^{\ep} > 3^{2/3} \pi \gamma^{-1}}
  \(6^{-1} \gamma^{-1} - 1\)A_i^{\ep} .
\end{align}

We now focus on the term in (\ref{E2a}). Using \eqref{hdeffirst}, we
can write the integral in (\ref{E2a}) as:
\begin{align} 
  \label{325} 
  2 & \int_\TT \left( \nabla v^\eps|^2 + \kappa^2 |v^\eps|^2 \right)
  dx \nonumber \\
  & = \frac{2}{|\ln \varepsilon|^2} \int_{\TT} \left( |\nabla
    h_{\varepsilon}|^2 + \kappa^2 h_{\varepsilon}^2 \right) dx \ +
  \frac{4 \kappa^2 \bar v^{\ep}}{|\ln \varepsilon|} \int_{\TT}
  h_{\varepsilon} dx + 2 \kappa^2 |\bar v^{\ep}|^2 \ell^2.
\end{align}
Integrating \eqref{vdef} over $\TT$ and recalling the definition of
$h_{\varepsilon}$ in \eqref{hdeffirst}, as well as \eqref{Amu}, leads to
\begin{equation}
  \label{E4n} \frac{ 4\kappa^2 \bar v^{\ep}}{|\ln \varepsilon|}
  \int_{\TT} h_{\varepsilon} dx = \frac{4 \bar
    v^{\varepsilon}}{|\ln \varepsilon|} \sum_i
  A_i^{\ep} - 4 \kappa^2 |\bar v^{\varepsilon}|^2 \ell^2.
\end{equation}
Combining \eqref{325} and \eqref{E4n}, we then find
\begin{align}
  2 \int_{\TT} \left( |\nab v^\ep|^2 + \kappa^2|v^\ep|^2 \right) dx &
  = \frac{2}{|\ln \varepsilon|^2} \int_{\TT} \left( |\nabla
    h_{\varepsilon}|^2 + \kappa^2 h_{\varepsilon}^2 \right) dx \notag
  \\
  & - \frac{1}{|\ln \varepsilon|} \(\frac{3}{\bar r_{\eps}} -
  \frac{2\bar \delta}{\kappa^2}\) \sum_i A_i^{\ep} - 2 \kappa^2 |\bar
  v^{\ep}|^2 \ell^2 . \label{327}
\end{align}
Also, by direct computation using \eqref{3.6} and \eqref{hdeffirst} we
have
\begin{align}
  \label{mk2v2l2}
  2 \kappa^2 |\bar v^\eps|^2 \ell^2 = {\bar\delta^2 \ell^2 \over 2
    \kappa^2} - \min E^0_\eps.
\end{align}
Therefore, combining this with \eqref{Eexpan}, \eqref{volterm31} and
\eqref{327}, after passing to the rescaled coordinates and performing
the cancellations we find that
\begin{align}
  \ell^2 F^\ep[u^\ep] \ge & \ T_1+ T_2+T_3+T_4 + \frac{2}{|\ln \eps|}
  \int_{\TTT} \left( |\nabla h_{\varepsilon}'(x')|^2 +
    \frac{\kappa^2}{\lep}
    |h_{\varepsilon}'(x')|^2 \right) dx'\notag \\
  & - \frac{1}{\pi \bar r_{\eps}^3} \sum_{i} |\tilde A_i^{\ep}|^2 +
  o_\eps(1), \label{328}
\end{align}
which is nothing but \eqref{lemma1}.


\noindent
{- \it Step 2: }
We proceed to absorbing the contributions of the small droplets in
\eqref{hterm1} by \eqref{voldef1}.  To that effect, we observe that,
for the function
\begin{equation}
  \Phi_\ep(x): = \frac{\gamma^{-5/2}}{4\pi^2 \cdot 3^{2/3}} x (x-\pi
  \bar r_{\eps}^2)^2  - \frac{1}{\bar r_{\eps}^3 } x^2 \geq
  \frac{\gamma^{-5/2} x}{4\pi^2 \cdot 3^{2/3}}  
  \left\{ \pi^2 \bar r_\eps^4 - \left( 2 \pi \bar r_\eps^2 +
      {\gamma^{5/2} \over \bar r_\eps^3} \right) x \right\}, 
\end{equation}
there exists a universal $\gamma \in (0,\frac16)$ such that $\Phi_\ep
(x) \geq x$ whenever $0 \leq x < 3^{2/3}\pi \gamma$ and $\eps$ is
universally small. Using this observation, we may absorb all the terms
with $A_i^{\ep} < 3^{2/3}\pi \gamma$ appearing in the second term in
(\ref{hterm1}) into \eqref{voldef2} by suitably reducing the
coefficient in front of the latter. This proves the result. \qed

\section{Ball construction}\label{sec4}

The goal of this section is to show \eqref{aprouver} using the
abstract framework of Theorem 3 in \cite{SS2}.  The difficulty in
doing this, as in the case of the Ginzburg-Landau model treated in
\cite{SS2}, is that the energy density $e_\ep' - \frac{1}{\pi} |\ln
\rho_{\eps}| \sum_{A_i^{\ep} \geq \beta} |\tilde A_i^{\ep}|^2
\tilde{\delta_i^\eps}$ is not positive (or bounded below independently
of $(u^\eps)$).  The next two subsections are meant to go around this
difficulty by showing that this energy density can be modified, by
displacing a part of the energy from the regions where the energy
density is positive into regions where the energy density is negative
in order to bound the modified energy density from below while making
only a small enough error. This is achieved by obtaining sharp lower
bounds on the energy of the droplets. Since their volumes and shapes
are a priori unknown, the terms in $M_\ep$ are used to control in a
quantitative way the deviations from the droplets being balls of fixed
volume.

In this section we perform a ball construction which follows the
procedure of \cite{SS2}.  The goal is to cover the droplets
$\{\Omega_{i,\eps}'\}$ whose volumes are bounded from below by a given
$\beta > 0$ with a finite collection of disjoint closed balls whose
radii are smaller than 1, on which we have a good lower bound for the
energy in the left-hand side of \eqref{aprouver}. This is possible for
sufficiently small $\eps$ in view of the fact that $\ell^\eps \to
\infty$ and that the leading order asymptotic behavior of the energy
from \eqref{minE} yields control on the perimeter and, therefore, the
essential diameter of each of $\Omega_{i,\eps}'$. The precise
statements are given below.  We will also need the following basic
result, which holds for sufficiently small $\eps$ ensuring that the
droplets are smaller than the sidelength of the torus (see the
discussion at the beginning of Sec. 2 in \cite{SCD}).
\begin{lem}
  \label{l-diamP}
  There exists $\eps_0 > 0$ depending only on $\ell$, $\kappa$,
  $\bar\delta$ and $\sup_{\eps > 0} F^\eps[u^\eps]$ such that for all
  $\eps \leq \eps_0$ we have
  \begin{align}
    \label{diamP}
    \mathrm{ess \, diam} (\Omega_{i,\eps}') \leq c | \partial
    \Omega_{i,\eps}' |,
  \end{align}
  for some universal $c > 0$.
\end{lem}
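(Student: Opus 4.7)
The proof has two stages. First, I will use the a priori energy bound to show that both $|\partial \Omega_{i,\eps}'|$ and $|\Omega_{i,\eps}'|$ are small compared with the torus side $\ell^\eps = \ell\sqrt{|\ln \eps|}$, so that no droplet wraps around a generator of the torus and each lifts to a bounded connected set of finite perimeter in $\mathbb{R}^2$. Second, I will invoke the classical Euclidean fact that such a set has essential diameter controlled by a universal constant times its perimeter.

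For the first stage, the hypothesis $\sup_\eps F^\eps[u^\eps] < +\infty$ combined with \eqref{F} and \eqref{minE} gives $E^\eps[u^\eps] \leq C \eps^{4/3}|\ln \eps|^{2/3}$, where $C$ depends only on $\ell$, $\kappa$, $\bar\delta$ and the supremum. Since \eqref{E2} implies $E^\eps[u^\eps] \geq \eps \sum_i |\partial \Omega_i^\eps|$, and since the leading-order analysis of \cite{SCD} recalled in Section \ref{setup} yields $\sum_i |\Omega_i^\eps| \leq C \eps^{2/3}|\ln \eps|^{1/3}$, after rescaling by $\sqrt{|\ln \eps|}$ via \eqref{231} we obtain
\begin{align*}
  \sup_i |\partial \Omega_{i,\eps}'| \leq C \eps^{1/3}|\ln \eps|^{7/6}, \qquad \sup_i |\Omega_{i,\eps}'| \leq C \eps^{2/3}|\ln \eps|^{4/3},
\end{align*}
both of which tend to $0$ as $\eps \to 0$, while $\ell^\eps \to +\infty$. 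Thus there exists $\eps_0 > 0$, depending only on the stated parameters, so that for $\eps \leq \eps_0$ each $\Omega_{i,\eps}'$ has perimeter and volume much smaller than $\ell^\eps$.

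For the second stage, a connected component on $\mathbb T^2_{\ell^\eps}$ whose perimeter and volume are both much less than $\ell^\eps$ cannot wrap a generator: a wrap would force, via a coarea/slicing argument, either the perimeter (through the $\mathcal H^0$-count of boundary points in each slice that is only partially covered) or the volume (through slices that are fully covered) to exceed a positive multiple of $\ell^\eps$. Therefore $\Omega_{i,\eps}'$ lifts to a bounded connected set $\widehat \Omega \subset \mathbb R^2$ of finite perimeter with $|\partial \widehat \Omega| = |\partial \Omega_{i,\eps}'|$ and with the same essential diameter. By De Giorgi's structure theorem the reduced boundary of $\widehat\Omega$ decomposes into rectifiable Jordan curves; $\widehat\Omega$ is enclosed by the outermost one, and since any two points on a rectifiable Jordan curve are joined along it by an arc of length at most half its total length, one gets $\mathrm{ess \, diam}(\widehat \Omega) \leq \tfrac{1}{2}|\partial \widehat \Omega|$, which is \eqref{diamP}.

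The only real obstacle is the first stage: one must convert the global energy bound into a pointwise statement that each droplet sits inside a fundamental domain of the torus. Once wrapping has been excluded, the diameter-versus-perimeter comparison is a standard planar fact.
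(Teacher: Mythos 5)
Your proof is correct and takes essentially the same route as the paper: the paper extracts the perimeter bound $\sum_i P_i^\eps \le C|\ln\eps|$ from the a priori energy bound (via \eqref{Efour} and \eqref{Pibound}), which in blown-up coordinates makes each $|\partial\Omega_{i,\eps}'|$ small relative to $\ell^\eps$, and then invokes \cite{ambrosio2} for the fact that a connected component of a set of finite perimeter on the torus, with perimeter small compared to the torus side, has essential diameter bounded by a universal multiple of its perimeter. One small attribution note: the decomposition of the boundary of a planar finite-perimeter set into nested rectifiable Jordan curves that your Euclidean step uses is the content of \cite{ambrosio2}, not of De Giorgi's structure theorem, which only yields rectifiability of the reduced boundary.
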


>From now on and for the rest of the paper we fix $\gamma$ to be the
constant given in Proposition \ref{lem1} and, as in the previous
section, we define $\beta = 3^{2/3} \pi \gamma$.  We also introduce
the following notation which will be used repeatedly below. To index
the droplets, we will use the following definitions:
\begin{align}
  I_{\beta} := \{ i \in \mathbb N : A_i^{\ep} \ \geq \ \beta\}, \quad
  I_{E} := \{i \in \mathbb N : |\Omega_{i,\eps}' \cap (\TTT \backslash
  E)| = 0 \}, \quad I_{\beta,E} := I_\beta \cap I_E,
\end{align}
where $E \subset \TTT$.  For a collection of balls $\mathcal{B}$, the
number $r(\mathcal{B})$ (also called the total radius of the
collection) denotes the sum of the radii of the balls in
$\mathcal{B}$. For simplicity, we will say that a ball $B$ {\em
  covers} $\Omega_{i,\eps}'$, if $i \in I_B$.

The principle of the ball construction introduced by Jerrard
\cite{jerrardball} and Sandier \cite{sandierball} and adapted to the
present situation is to start from an initial set, here $\bigcup_{i
  \in I_{\beta,U}} \Omega_{i,\eps}'$ for a given $U \subseteq \TTT$
and cover it by a union of finitely many closed balls with
sufficiently small radii. This collection can then be transformed into
a collection of {\em disjoint} closed balls by the procedure, whereby
every pair of intersecting balls is replaced by a larger ball whose
radius equals the sum of the radii of the smaller balls and which
contains the smaller balls. This process is repeated until all the
balls are disjoint. The obtained collection will be denoted $\B_0$,
its total radius is $r(\B_0)$. Then each ball is dilated by the same
factor with respect to its corresponding center. As the dilation
factor increases, some balls may touch. If that happens, the above
procedure of ball merging is applied again to obtain a new collection
of disjoint balls of the same total radius.  The construction can be
stopped when any desired total radius $r$ is reached, provided that
$r$ is universally small compared to $\ell^\eps$. This yields a
collection $\B_r$ covering the initial set and containing a
logarithmic energy \cite{jerrardball,sandierball}.

We now give the statement of our result concerning the ball
construction and the associated lower bounds. Throughout the rest of
the paper we use the notation $f^+ := \max(f, 0)$ and $f^- := -\min(f,
0)$.

\begin{pro}\label{boulesren}  
  Let $U \subseteq \TTT$ be an open set such that $I_{\beta,U} \not=
  \varnothing$, and assume that \eqref{ZO.1} holds.
  \begin{enumerate}
  \item[-] There exists $\eps_0 > 0$, $r_0 \in (0, 1)$ and $C > 0$
    depending only on $\ell$, $\kappa$, $\bar\delta$ and $\sup_{\eps >
      0} F^\eps[u^\eps]$ such that for all $\eps < \eps_0$ there
    exists a collection of finitely many disjoint closed balls $\B_0$
    whose union covers $\bigcup_{i \in I_{\beta,U}} \Omega_{i,\eps}'$
    and such that
    \begin{align}
      \label{rB00}
      r(\B_0)\le c \eps^{1/3} |\ln \eps|^{1/6} \sum_{i \in
        I_{\beta,U}} P_i^\eps < r_0,
    \end{align}
    for some universal $c > 0$. Furthermore, for every $r\in
    [r(\B_0),r_0]$ there is a family of disjoint closed balls $\B_r$
    of total radius $r$ covering $\B_0$.

  \item[-] For every $B \in \B_r$ such that $B \subset U$ we have
    $$ \int_{B}  \left( |\nabla \h|^2   \, dx' + \frac{\kappa^2}{4 |\ln
        \varepsilon|} |\h|^2 \right) dx' \ge \frac{1}{2\pi} \( \ln
    \frac{r}{r(\B_0)}- cr \)^+ \sum_{i \in I_{\beta,B}} |\tilde
    A_i^{\ep}|^2,$$ for some $c>0$ depending only on $\kappa$ and
    $\bar\delta$.

  \item[-] If $B\in\B_r$, for any non-negative Lipschitz function
    $\chi$ with support in $U$, we have
    \begin{multline*} 
      \int_{B} \chi \left( |\nabla \h|^2 \,dx' + \frac{\kappa^2}{4
          |\ln \varepsilon|} |\h|^2\right) dx' - \frac{1}{2 \pi
      }\(\ln\frac r{r(\B_0)} - cr\)^+
      \sum_{i\in I_{\beta,B}} \chi_i  |\tilde A_i^{\ep}|^2 \nonumber \\
      \ge - C \|\nab\chi\|_\infty \sum_{i \in I_{\beta,B}} |\tilde
      A_i^{\ep}|^2,
    \end{multline*} 
    where $\chi_i := \int_U \chi \tilde \delta_i^{\eps} dx' $, with
    $\tilde \delta_i^\eps(x')$ defined in \eqref{fakedirac}, for some
    $c > 0$ depending only on $\kappa$ and $\bar\delta$, and a
    universal $C > 0$.
\end{enumerate}
\end{pro}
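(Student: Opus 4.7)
The plan is to adapt the Jerrard-Sandier ball construction (\cite{jerrardball,sandierball}) to our unquantized setting following the scheme of \cite{SS2}, while absorbing the lower-order perturbations coming from the background $\bar\mu^\eps$ and the zeroth-order term in \eqref{heqn1} into the $-cr$ correction. First I would build $\B_0$: by Lemma \ref{l-diamP}, each $\Omega_{i,\eps}'$ with $i \in I_{\beta,U}$ sits inside a closed ball of radius at most $c\eps^{1/3}|\ln\eps|^{1/6} P_i^\eps$. The a priori bound \eqref{ZO.1} together with Proposition \ref{lem1} forces $M_\eps$ and $\mu^\eps(\TT)$ to be uniformly bounded, so $|I_{\beta,U}| \leq C|\ln\eps|$ and Cauchy-Schwarz on $\sum \sqrt{A_i^\eps}$ gives $\sum_{i \in I_{\beta,U}} P_i^\eps = O(|\ln\eps|)$; hence the total initial radius is $O(\eps^{1/3}|\ln\eps|^{7/6}) = o_\eps(1)$. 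Merging intersecting balls (each colliding pair replaced by a closed ball whose radius is the sum of the two) yields $\B_0$ satisfying \eqref{rB00}, and the standard dilation-and-merging growth procedure then extends $\B_0$ to a family $\B_r$ of disjoint balls of total radius $r$ for every $r \in [r(\B_0), r_0]$.

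For the per-ball lower bound, I would fix $B \in \B_r$ with $B \subset U$ and track the energy through the growth. On each annular sub-region free of merging events, the co-area formula and Cauchy-Schwarz on circles yield $\int_{\partial B_\sigma}|\nabla \h|^2 d\mathcal{H}^1 \ge \frac{1}{2\pi\sigma}\bigl(\int_{\partial B_\sigma}\partial_\nu \h\bigr)^2$. Integrating \eqref{heqn1} over $B_\sigma$, the flux reads
$$-\int_{\partial B_\sigma}\partial_\nu \h = \sum_{i \in I_{B_\sigma}} A_i^\eps - \bar\mu^\eps |B_\sigma| + \frac{\kappa^2}{|\ln\eps|}\int_{B_\sigma}\h,$$
and the first sum dominates $\sum_{i \in I_{\beta,B_\sigma}}\tilde A_i^\eps$ since $0 \le \tilde A_i^\eps \le A_i^\eps$; the background term is $O(\sigma^2)$ and the $\h$ term is absorbed into the retained $\frac{\kappa^2}{4|\ln\eps|}|\h|^2$ via Cauchy-Schwarz, producing the $-cr$ correction after integration. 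I would then track this differential inequality through merging events, using that all charges are positive so $\bigl(\sum_{i \in I_{\beta,B_\sigma}}\tilde A_i^\eps\bigr)^2 \ge \sum_{i \in I_{\beta,B_\sigma}}|\tilde A_i^\eps|^2$; integrating $d\sigma/\sigma$ between $r(\B_0)$ and $r$ then yields the second bullet.

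For the weighted version, I would decompose $\chi = \chi(x_B) + (\chi - \chi(x_B))$ on each $B$ with $x_B$ any reference point in $B$, apply the second bullet with weight $\chi(x_B)$, and estimate the oscillation $|\chi - \chi(x_B)| \leq 2\|\nab\chi\|_\infty r(B)$ on $B$; the same estimate yields $|\chi_i - \chi(x_B)| \leq 2\|\nab\chi\|_\infty r(B)$, so the resulting error is at most $C\|\nab\chi\|_\infty \sum_{i \in I_{\beta,B}} |\tilde A_i^\eps|^2$ as required. The main obstacle is the unquantized nature of the charges: a single droplet with very large $A_i^\eps$ would overwhelm the logarithmic estimate if its full flux $A_i^\eps$ were used, so the truncation $\tilde A_i^\eps$ (which for $A_i^\eps \ge 3^{2/3}\pi\gamma^{-1}$ grows only like $\sqrt{A_i^\eps}$) is essential; the inequality $\tilde A_i^\eps \le A_i^\eps$ then allows the flux computation to feed directly into the desired right-hand side, with the residual contribution of overly large droplets absorbed by the $M_\eps$ term of Proposition \ref{lem1} in the global estimate of the next section.
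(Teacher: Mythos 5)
Your outline of the second item is sound and matches the paper's approach: integrate the equation for $\h$ over balls, apply Cauchy--Schwarz on circles, optimize over the zeroth-order contribution, and run the resulting differential inequality through the Jerrard--Sandier growth. However, both the first and third items contain genuine gaps as you have argued them.

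In the first item you invoke ``\eqref{ZO.1} together with Proposition~\ref{lem1} forces $M_\eps$ and $\mu^\eps(\TT)$ to be uniformly bounded'' in order to control $\sum_{i\in I_{\beta,U}}P_i^\eps$. That is circular: Proposition~\ref{lem1} only gives $\ell^2 F^\eps[u^\eps]\geq M_\eps + \frac{2}{|\ln\eps|}\int(\cdots) - \frac{1}{\pi\bar r_\eps^3}\sum_{A_i^\eps\geq\beta}|\tilde A_i^\eps|^2 + o_\eps(1)$, and the negative last term is a priori unbounded, so \eqref{ZO.1} alone yields nothing on $M_\eps$. Establishing that $M_\eps$ is bounded is exactly the content of Proposition~\ref{Mbound}, whose proof passes through Proposition~\ref{wspread} and hence through the very proposition you are proving. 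The non-circular route, and the one the paper takes, is to deduce $\frac{1}{|\ln\eps|}\sum_i P_i^\eps\leq C$ directly from the cruder lower bound \eqref{Efour}, which already controls the perimeter sum by a bounded-below quadratic in $\frac{1}{|\ln\eps|}\sum_i A_i^\eps$.

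In the third item, the decomposition $\chi=\chi(x_B)+(\chi-\chi(x_B))$ does not deliver the stated error. After applying the second item with the constant weight $\chi(x_B)$, you are left with a term of the form
\begin{equation*}
\frac{1}{2\pi}\Big(\ln\frac{r}{r(\B_0)}-cr\Big)^{+}\sum_{i\in I_{\beta,B}}\big(\chi(x_B)-\chi_i\big)\,|\tilde A_i^\eps|^2,
\end{equation*}
and the oscillation bound $|\chi(x_B)-\chi_i|\leq 2\|\nab\chi\|_\infty\, r$ produces an error of order $r\|\nab\chi\|_\infty\big(\ln\frac{r}{r(\B_0)}\big)\sum_i|\tilde A_i^\eps|^2$. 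Since $r(\B_0)=O(\eps^{1/3}|\ln\eps|^{7/6})$ while $r$ stays of order one, the factor $\ln\frac{r}{r(\B_0)}$ is of order $|\ln\eps|$, and the error overshoots the claimed bound by a factor $|\ln\eps|$. The paper's proof avoids this by the layer-cake identity $\int_B\chi\,(\cdots)=\int_0^\infty\int_{E_t\cap B}(\cdots)\,dt$ with $E_t=\{\chi>t\}$, together with the key observation \eqref{boundr1} that whenever the growth radius $s(i,t)$ at which a droplet's ball leaves $E_t$ is small, the level $t$ is already within $O(s(i,t)\|\nab\chi\|_\infty)$ of $\chi_i$; this localization converts the logarithmic singularity into a fixed multiple of $\|\nab\chi\|_\infty$ per droplet (cf.~\eqref{intremain}). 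That mechanism is essential and is missing from your argument.
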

\begin{remark}
The explanation for the factor of $\frac{1}{4}$ in front of $\frac{\kappa^2}{|\ln \varepsilon|} |\h|^2$ is that we must
`save' a fraction of this term for the mass displacement argument in Section \ref{sec5} and in the convergence 
result in Section \ref{sec6}.
\end{remark}
\begin{proof}[Proof of the first item]
  Choose an arbitrary $r_0 \in (0, 1)$.  As in \cite{SCD}, from the
  basic lower bound on $\bar E^\eps$ (see \cite[Equations (2.12) and
  (2.15)]{SCD}):
  \begin{align}
    \label{Efour}
    \bar E^\eps[u^\eps] \geq \frac{1}{|\ln \eps|} \sum_i P_i^\eps - {2
      \bar \delta \over \kappa^2 |\ln \eps|} \sum_i A_i^\eps + {2
      \over \kappa^2 \ell^2 |\ln \eps|^2} \left( \sum_i A_i^\eps
    \right)^2,
  \end{align}
  where $A_i^\eps$ and $P_i^\eps$ are defined in \eqref{AP}, we obtain
  with the help of \eqref{ZO.1} that
  \begin{align}
    \limsup_{\eps \to 0}\frac{1}{|\ln \eps|} \sum_i A_i^\eps \leq C,
    \qquad \limsup_{\eps \to 0}\frac{1}{|\ln \eps|} \sum_i P_i^\eps
    \leq C, \label{Pibound}
  \end{align}
  for some $C > 0$ depending only on $\ell$, $\kappa$, $\bar\delta$
  and $\sup_{\eps > 0} F^\eps[u^\eps]$. 

  As is well-known, the essential diameter of a connected component of
  a set of finite perimeter on a torus can be bounded by its
  perimeter, provided that the latter is universally small compared to
  the size of the torus (see, e.g., \cite{ambrosio2}). Therefore, in
  view of the the definition of $P_i^\eps$ in \eqref{AP} and the
  second of \eqref{Pibound}, for sufficiently small $\eps$ it is
  possible to cover each $\Omega_{i,\eps}'$ with $i \in I_{\beta,U}$
  by a closed ball $B_i$, so that the collection $\tilde {\mathcal
    B}_0$ consisting of all $B_i$'s (possibly intersecting) has total
  radius
  \begin{align}
    \label{rB0}
    r_0(\tilde{\B}_0) \leq C \eps^{1/3} |\ln \eps|^{1/6} \sum_{i \in
      I_{\beta,U}} P_i^\eps,
  \end{align}
  for some universal $C > 0$. Furthermore, by the first inequality in
  \eqref{Pibound} and the fact that $A_i^\eps \geq \beta$ for all $i \in
  I_{\beta,U}$ the collection $\tilde{\B}_0$ consists of only finitely
  many balls. Therefore, we can apply the construction \`a la Jerrard
  and Sandier outlined at the beginning of this section to obtain the
  desired family of balls $\B_0$ and $\B_r$, with $r(\B_0) = r({\tilde
    \B}_0)$. The estimate on the radii follows by combining the second
  of \eqref{Pibound} and \eqref{rB0} and the fact that $\ell^\eps \to
  \infty$ with the rate depending only on $\ell$, for sufficiently
  small $\eps$ depending on $\ell$, $\kappa$, $\bar\delta$,
  $\sup_{\eps > 0} F^\eps[u^\eps]$ and $r_0$.
\end{proof}

\begin{proof}[Proof of the second item]

  Let $B \subset U$ be a ball in the collection
  $\mathcal{B}_r$. Denote the radius of $B$ by $r_B$ and set
  \[ 
  X_{\varepsilon} := \frac{\kappa^2}{|\ln \varepsilon|} \int_B \h dx'.
  \] 
  Integrating \eqref{heqn1} over $B$ and applying the divergence
  theorem, we have 
  \begin{align}
    \label{greens1}
    \int_{\partial B} \frac{\partial \h}{\partial \nu} \ d \mathcal
    H^1(x') = {m}_{B,\varepsilon} - X_{\varepsilon},
  \end{align}
  where
  $$
  {m}_{B,\varepsilon} := \int_B (\mu'_\eps(x') - \bar
  \mu^{\eps}) dx' = \sum_{i \in I_{B}} A_i^{\ep} + \sum_{i \not\in
    I_B} \theta_i A_i^\eps - \bar \mu^{\eps}|B|,
  $$ 
  for some $\theta_i \in [0, 1)$ representing the volume fraction in
  $B$ of those droplets that are not covered completely by $B$, and
  $\nu$ is the inward normal to $\partial B$.  Using the
  Cauchy-Schwarz inequality, we then deduce from \eqref{greens1} that
  \begin{equation}\label{lecercle}
    \int_{\partial B}|\nabla \h|^2 \, d
    \mathcal H^1(x') \geq \frac{1}{2\pi r_B} ({m}_{B,
      \varepsilon} - X_{\varepsilon})^2 \geq \frac{{m}_{ B, \ep}^2 -
      2 {m}_{B, \ep} X_\ep}{2\pi r_B} .
  \end{equation} 
  By another application of the Cauchy-Schwarz inequality, we may
  write
  \begin{equation}\label{lecercle2}
    \frac{\kappa^2}{4 |\ln \varepsilon|} \int_B |\h|^2 \, dx' \geq
    \frac{X_{\varepsilon}^2}{4 \pi r_B^2}\frac{|\ln
      \varepsilon|}{\kappa^2}. 
  \end{equation}
 
  We now add \eqref{lecercle} and \eqref{lecercle2} and optimize the
  right-hand side over $X_{\eps}$. We obtain
  \begin{equation}\label{lecerclelast}
    \int_{\partial B}|\nabla \h|^2 \, d
    \mathcal H^1(x')  + \frac{\kappa^2}{4|\ln \varepsilon|} \int_B
    |\h|^2 \, dx' \geq \frac{{m}_{B,\eps}^2}{2\pi r_B}\(1 -
    \frac{Cr_B}{|\ln \eps|}\), 
  \end{equation}
  for $C =  \kappa^4$. Recalling that $r_B \leq r \leq r_0 <
  1$, we can choose $\eps$ sufficiently small depending only on
  $\kappa$ so that the term in parentheses above is positive.

  Inserting the definition of ${m}_{B, \ep}$ into \eqref{lecerclelast}
  and discarding some positive terms yields
  \begin{align}
    \int_{\partial B}|\nabla \h|^2 & d \mathcal H^1(x') +
    \frac{\kappa^2}{4|\ln \varepsilon|} \int_B |\h|^2 \, dx' \notag \\
    & \geq \frac{1}{2\pi r_B} \Big( \sum_{i \in I_B} A_i^{\ep} +
    \sum_{i \not\in I_B} \theta_i A_i^\eps
    - \bar \mu^{\eps}|B|\Big)^2\(1 - \frac{Cr_B}{|\ln \eps|}\)\notag \\
    & \geq \frac{1}{2\pi r_B} \Big( \sum_{i \in I_{B}} A_i^{\ep} +
    \sum_{i \not\in I_B} \theta_i A_i^\eps \Big)^2\(1 - 2\bar
    \mu^{\eps}|B|\Big(\sum_{i \in I_B} A_i^{\ep}\Big)^{-1}- \frac{C r_B}{|\ln
      \eps|}\).\label{lecercle4}
  \end{align} 
  We now use the fact that by construction $B$ covers at least one
  $\Omega_{i,\eps}'$ with $A_i^{\ep} \geq \beta$. This leads us to
  \begin{align}
    \nonumber \int_{\partial B}|\nabla \h|^2 & d \mathcal H^1(x') +
    \frac{\kappa^2}{4|\ln \varepsilon|} \int_B |\h|^2
    \, dx' \notag \\
    & \geq \frac{1}{2\pi r_B} \Big(\sum_{i \in I_{B}} A_i^{\ep} +
    \sum_{i \not\in I_B} \theta_i A_i^\eps \Big)^2 \(1 - {2 \pi \bar
      \mu^{\eps} r_B^2 \over \beta} - {C r_B \over |\ln
      \eps|}  \) \notag \\
    &\geq \frac{1}{2\pi r_B} \sum_{i \in I_{\beta,B}} |\tilde
    A_i^{\ep}|^2 \(1 - c r_B\), \label{lecercle4filter}
  \end{align}
  for some $c > 0$ depending only on $\kappa$ and $\bar\delta$, where
  in the last line we used that $A_i^{\ep}\ge \tilde A_i^{\ep}$. Hence
  there exists $r_0 \in (0,1)$ depending only on $\kappa$, and
  $\bar\delta$ such that the right-hand side of
  \eqref{lecercle4filter} is positive.

  Finally, let us define $\F(x,r) := \int_{B(x,r)} |\nabla \h|^2 \,
  dx' + {r \kappa^2 \over 4 |\ln \eps|} \int_{B(x,r)} |\h|^2 \, dx'$,
  where $B(x,r)$ is the ball centered at $x$ of radius $r$. The
  relation \eqref{lecercle4filter} then reads for $B(x,r) = B \in
  \B_r$ and a.e. $r \in (r(\B_0), r_0]$:
  \begin{align} 
    \frac{\partial \F}{\partial r} &\geq \frac{1}{2\pi r} \sum_{i \in
      I_{\beta,B}} |\tilde A_i^{\ep}|^2 (1-c r),
  \end{align}
  with $c$ as before.  Then using \cite[Proposition 4.1]{SS3}, for
  every $B\in\B(s) := \B_r$ with $r = e^{s} r(\mathcal{B}_0)$ (using
  the notation of \cite[Theorem 4.2]{SS3}) we have
  \begin{align}
    \nonumber \int_{B\sm\B_0}|\nabla \h|^2 \, dx' + \frac{r_B
      \kappa^2}{4|\ln \eps|} \int_{B} |\h|^2 \, dx' &\ge \int_0^s
    \sum_{\substack{B'\in \B(t)\\B'\subset B}} \frac{ 1}{2 \pi }
    \sum_{i \in I_{\beta,B'}}
    |\tilde A_i^{\ep}|^2\(1- c  r(\B(t))\)  dt\\
    &\nonumber =\int_0^s \sum_{\substack{B'\in \B(t)\\B'\subset B}}
    \frac{ 1}{2 \pi } \sum_{i \in I_{\beta,B'}}
    |\tilde A_i^{\ep}|^2\(1- c  e^t r(\mathcal{B}_0)\)  dt\\
    &\geq \frac{1}{2\pi}\sum_{i \in I_{\beta,B}} |\tilde A_i^{\ep}|^2
    \(\ln \frac{r}{r(\B_0)}- c r\), \label{BB0}
\end{align}
where we observed that the double summation appearing in the first and
second lines is simply the summation over $I_{\beta,B}$. Once again,
in view of the fact that $r_B \leq 1$ and that both terms in the
integrand of the left-hand side of \eqref{BB0} are non-negative, this
completes the proof of the second item.
\end{proof}

\begin{proof}[Proof of the third item] This follows \cite{SS2}. Let
  $\chi$ be a non-negative Lipschitz function with support in $U$.  By
  the ``layer-cake'' theorem \cite{lieb-loss}, for any $B \in \B_r$ we
  have
  \begin{align}\label{machin}
    \int_{B} \chi \left( |\nab \h|^2 + \frac{\kappa^2}{4 |\ln \eps|}
      |\h|^2 \right) dx' &= \int_0^{+\infty} \int_{E_t \cap B} \(
    |\nabla h_{\varepsilon}'|^2 + \frac{\kappa^2 }{4 \lep}
    |h_{\varepsilon}'|^2 \) dx' \,dt,
  \end{align}
  where $E_t:=\{\chi >t\}$.  If $ i \in I_{\beta, B}$, then by
  construction for any $s\in[r(\B_0),r]$ there exists a unique closed
  ball $B_{i,s}\in\B_s$ containing $\Omega_{i,\eps}'$.  Therefore, for
  $t>0$ we can define
  $$ 
  s(i,t) := \sup \left\{ s \in [r(\B_0), r] \, : \, B_{i,s}\subset E_t
  \right\},
  $$ 
  with the convention that $s(i,t) = r(\B_0)$ if the set is empty. We
  also let $B_i^t := B_{i,s(i,t)}$ whenever $s(i,t) > r(\B_0)$. Note
  that for each $i \in I_{\beta,B}$ we have that $t \mapsto s(i,t)$ is
  a non-increasing function. In particular, we can define $t_i \geq 0$
  to be the supremum of the set of $t$'s at which $s(i,t) = r$ (or
  zero, if this set is empty).

  If $t > t_i$ and $s(i,t) > r(\B_0)$, then for any $x \in
  \Omega_{i,\eps}'$ and any $y \in B_i^t \sm E_t$ (which is not empty)
  we have
  \begin{equation}\label{boundr} 
    \chi(x) - t\le \chi(x) - \chi(y)\le 2
    s(i,t)\|\nab\chi\|_\infty.
  \end{equation}
  Averaging over all $x \in \Omega_{i,\eps}'$, we hence deduce
 \begin{equation}\label{boundr1} 
   \chi_i -  t \leq  2 s(i,t) \|\nabla
   \chi\|_{\infty}.
 \end{equation}
 Now, for any $t\ge 0$ the collection $\{B_i^t\}_{i \in
   I_{\beta,B,t}}$, where $I_{\beta,B,t} := \{i \in I_{\beta,B} :
 s(i,t) > r(\B_0) \}$ is disjoint. Indeed if $i,j\in I_{\beta,B,t}$
 and $s(i,t)\ge s(j,t)$ then, since $\B_{s(i,t)}$ is disjoint, the
 balls $B_{i,s(i,t)}$ and $B_{j,s(i,t)}$ are either equal or
 disjoint. If they are disjoint we note that $s(i,t)\ge s(j,t)$
 implies that $B_{j, s(j,t)} \ \subseteq \ B_{j,s(i,t)}$, and,
 therefore, $B_j^t = B_{j,s(j,t)}$ and $B_i^t = B_{i,s(i,t)}$ are
 disjoint. If they are equal and $s(i,t) > s(j,t)$, then
 $B_{j,s(j,t)}\subset E_t$, contradicting the definition of
 $s(j,t)$. So $s(j,t) = s(i,t)$ and then $B_j^t = B_i^t$.

 Now assume that $B'\in \{B_i^t\}_{i \in I_{\beta,B,t}}$ and let $s$
 be the common value of $s(i,t)$ for $i$'s in $I_{\beta, B'}$. Then,
 the previous item of the proposition yields 
 \begin{align*}
   \int_{B'} \left( |\nabla \h|^2 + {\kappa^2 \over 4 |\ln \eps| }
     |\h|^2 \right) dx' \nonumber & \ge \frac{1}{2\pi} \( \ln\frac
   {s}{r(\B_0)} - c s\)^+ \sum_{i\in I_{\beta,B',t}} |\tilde
   A_i^{\ep}|^2.
 \end{align*} 
 Summing over $B'\in\{B_i^t\}_{i \in I_{\beta,B,t}}$, we deduce
 \begin{align}
   \int_{B \cap E_t } \left( |\nab \h|^2 + {\kappa^2 \over 4 |\ln
       \eps|} |\h|^2 \right) dx' & \ge \frac{1}{2\pi} \sum_{i\in
     I_{\beta,B,t}} |\tilde A_i^{\ep}|^2 \( \ln\frac
   {s(i,t)}{r(\B_0)} - c s(i,t)\)^+\nonumber \\
   & = \frac{1}{2\pi} \sum_{i\in I_{\beta,B}} |\tilde A_i^{\ep}|^2 \(
   \ln\frac {s(i,t)}{r(\B_0)} - c s(i,t)\)^+,
 \end{align} 
 where in the last inequality we took into consideration that all the
 terms corresponding to $i \in I_{\beta,B} \sm I_{\beta,B,t}$ give no
 contribution to the sum in the right-hand side.  Integrating the
 above expression over $t$ and using the fact that $r_0(\B_0) \le
 s(i,t)\le r$ yields
\begin{multline}\label{lastguy}
  \int_{0}^{+\infty}\int_{E_t \cap B} \left( |\nab \h|^2+
    \frac{\kappa^2}{ 4 |\ln \eps|} |\h|^2 \right) dx'\, dt \ge
  \frac{1}{2\pi} \sum_{i \in I_{\beta,B}} |\tilde A_i^{\ep}|^2
  \int_0^{\chi_i}
  \( \ln\frac {s(i,t)}{r(\B_0)} - cr\)^+ dt \\
  \geq \frac{1}{2\pi} \sum_{i \in I_{\beta,B}} \chi_i |\tilde
  A_i^{\ep}|^2 \left( \ln { r \over r(\B_0)} - c r \right)^+ +
  \frac{1}{2\pi} \sum_{i \in I_{\beta,B}} |\tilde A_i^{\ep}|^2
  \int_0^{\chi_i} \ln\frac {s(i,t)}{r} dt.
\end{multline}

We now concentrate on the last term in \eqref{lastguy}. Using the
estimate in \eqref{boundr1} and the definition of $t_i$, we can bound
the integral in this term as follows
\begin{align}
  \label{intremain}
  \int_0^{\chi_i} \ln\frac {s(i,t)}{r} dt \geq \int_{t_i}^{\chi_i} \ln
  \left( \frac {\chi_i - t}{2 r \| \nabla \chi \|_\infty} \right) dt
  \geq -C \| \nabla \chi \|_\infty,
\end{align}
for some universal $C > 0$, which is obtained by an explicit
computation and the fact that $r \leq r_0 < 1$. Finally, combining
\eqref{intremain} with \eqref{lastguy}, the statement follows from
\eqref{machin}.  
\end{proof}

\begin{remark}
  \label{moinsb0} Inspecting the proof, we note that the statements of
  the proposition are still true with the left-hand sides replaced by
  $ \int_{B\backslash \B_0}\chi |\nab h_\ep'|^2 \, dx'+
  \frac{\kappa^2}{4 \lep}\int_B \chi |h_\ep'|^2 dx'$ (with $\chi
  \equiv 1$ or $\chi$ Lipschitz, respectively).
\end{remark}

\section{Energy displacement} \label{sec5}

In this section, we follow the idea of \cite{SS2} of localizing the
ball construction and combine it with a ``energy displacement'' which
allows to reduce to the situation where the energy density in
\eqref{aprouver} is bounded below.  For the proposition below we
define for all $x' \in \TTT$:
\begin{equation}\label{defnue}
  \nu^{\varepsilon}(x') := \sum_{i \in I_\beta} |\tilde
  A_i^{\ep}|^2 \tilde \delta_i^\eps (x'), 
\end{equation}
where $\tilde \delta_i^\eps(x')$ is given by (\ref{fakedirac}). We
also recall that $\rho_{\eps}$ defined in \eqref{newRdef1} is the
expected radius of droplets in a minimizing configuration in the blown
up coordinates.

We cover $\mathbb{T}_{\ell^{\eps}}^2$ by the balls of radius $\frac14
r_0$ whose centers are in $\frac{r_0}{8} \mz^2$.  We call this cover
$\{U_\alpha\}_\alpha$ and $ \{x_\alpha\}_\alpha$ the centers. We also
introduce $D_\alpha := B(x_\alpha, \frac{3r_0}{4})
$. 

\begin{pro}\label{wspread} 
  Let $\h$ satisfy \eqref{heqn1}, assume \eqref{ZO.1} holds, and set
\begin{equation}\label{deff}
  f_{\varepsilon}: = |\nab \h|^2 + \frac{\kappa^2}{ 2 \lep} |\h|^2  -
  \frac{1}{2 \pi}|\ln \rho_{\eps}| \, \nu^{\varepsilon}.
\end{equation}
Then there exist $\eps_0 > 0$ as in Proposition \ref{boulesren} and
constants $c,C > 0$ depending only on $\bar{\delta}$ and $\kappa$ such
that for all $\eps < \eps_0$, there exists a family of integers
$\{n_\alpha\}_{\alpha}$ and a density $g_{\varepsilon}$ on
$\mathbb{T}_{\ell^{\eps}}^2$ with the following properties.
\begin{itemize}
\item[-] $g_\eps$ is bounded below:
$$g_{\varepsilon} \ge -c
\ln^2 (M_{\varepsilon} + 2) \quad \text{on} \
\mathbb{T}_{\ell^{\eps}}^2.$$
\item[-] For any $\alpha$, 
$$n_{\alpha}^2 \le C \(  g_\ep(D_\alpha) + c \ln^2 (M_\ep+2)\).
$$
\item[-] For any Lipschitz function $\chi$ on
  $\mathbb{T}_{\ell^{\eps}}^2$ we have
\begin{align}\label{wg}
  \left|\int_\TTT \chi (f_{\varepsilon}-g_{\varepsilon}) dx'\right|
  \le C \sum_{\alpha}\(\nu^\ep(U_\alpha)+ (n_\alpha +M_\ep) \ln
  (n_\alpha + M_\ep +2) \)\|\nab \chi\|_{L^\infty(D_\alpha)}.
  \end{align}


\end{itemize}
\end{pro}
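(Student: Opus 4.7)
I would adapt the localized ball construction and mass-displacement scheme of \cite{SS2} to handle the non-quantized droplet charges. The key observation is that the singular negative term $-\tfrac{1}{2\pi}|\ln\rho_\eps|\,\nu^\eps$ is supported in the droplets $\Omega'_{i,\eps}$ with $i \in I_\beta$, all of which are covered by the disjoint collection $\mathcal{B}_{r_0}$ produced by Proposition \ref{boulesren} (applied with $U = \TTT$). Outside these balls $f_\eps \geq 0$ automatically; inside each $B \in \mathcal{B}_{r_0}$ the plan is to absorb the negative singular mass into the positive Dirichlet--mass energy supplied by the ball construction, and then redistribute the residual uniformly on $B$ to obtain a density $g_\eps$ bounded below.

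The first main step is to compare the lower bound from Proposition \ref{boulesren}(ii), $\tfrac{1}{2\pi}(\ln(r_0/r(\mathcal{B}_0))-cr_0)^+ \sum_{i \in I_{\beta,B}}|\tilde A_i^\eps|^2$, with the unwanted coefficient $\tfrac{1}{2\pi}|\ln\rho_\eps|$. Using $r(\mathcal{B}_0) \leq c\eps^{1/3}|\ln\eps|^{1/6}\sum P_i^\eps$ from the first item, combined with the isoperimetric part of $M_\eps$ and the a priori bound $\sum A_i^\eps \leq C|\ln\eps|$ from \eqref{Pibound} (which, via Cauchy--Schwarz, gives $\sum P_i^\eps \leq M_\eps + C|\ln\eps|$), one obtains $|\ln\rho_\eps| - \ln(r_0/r(\mathcal{B}_0)) \leq C\ln(M_\eps+2) + C$. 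Thus the net integral $\int_B f_\eps\,dx'$, retaining the $\tfrac{\kappa^2}{4|\ln\eps|}|\h|^2$ term set aside by Proposition \ref{boulesren}, is bounded below by $-C\ln(M_\eps+2)\sum_{i \in I_{\beta,B}}|\tilde A_i^\eps|^2$. Setting $g_\eps := f_\eps$ off the balls and $g_\eps := |B|^{-1}\int_B f_\eps\,dx'$ on each $B$, the uniform cap $|\tilde A_i^\eps|^2 \leq C\ln|\ln\eps|$ built into the truncation \eqref{P1.N34} then yields property (i). For property (ii) I would define $n_\alpha$ as the number of balls in $\mathcal{B}_{r_0}$ meeting $U_\alpha$, and apply the divergence theorem to $\h$ on $D_\alpha$ in the spirit of \eqref{lecercle}--\eqref{lecercle2}, obtaining $g_\eps(D_\alpha) \geq c\bigl(\sum_{i \in I_{\beta,D_\alpha}} \tilde A_i^\eps\bigr)^2 - C\ln^2(M_\eps+2)$; the lower bound $\tilde A_i^\eps \geq c$ for $i \in I_\beta$ then reduces the right-hand side to $c\,n_\alpha^2 - C\ln^2(M_\eps+2)$.

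Finally, for the error estimate \eqref{wg}, I would use that $f_\eps \equiv g_\eps$ off the balls and $\int_B (f_\eps - g_\eps)\,dx' = 0$ on each $B$. For any Lipschitz $\chi$, writing $\int_B \chi(f_\eps - g_\eps)\,dx' = \int_B (\chi - \chi(x_B))(f_\eps - g_\eps)\,dx'$ for a basepoint $x_B \in B$ bounds this by $2r_0\|\nabla\chi\|_{L^\infty(D_\alpha)}\int_B(|f_\eps|+|g_\eps|)\,dx'$, for any chart index $\alpha$ with $D_\alpha \supset B$. Summing over the balls associated with each $\alpha$ and estimating $\int_B |f_\eps|$ via the Step~2 bounds (which give a contribution proportional to $\nu^\eps(U_\alpha)$ plus $(n_\alpha + M_\eps)\ln(n_\alpha+M_\eps+2)$ after accounting for the truncated charges and the displacement error per ball) produces the right-hand side of \eqref{wg}. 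The delicate point throughout is the comparison step: securing $|\ln\rho_\eps| - \ln(r_0/r(\mathcal{B}_0)) \leq C\ln(M_\eps+2)$, with residual only logarithmic in $M_\eps$, is essential because any error polynomial in $M_\eps$ would propagate multiplicatively through Steps~2--3 and be insufficient for the subsequent application to \eqref{aprouver}; this forces one to exploit the isoperimetric part of $M_\eps$ and the area bound \eqref{Pibound} in tandem rather than separately.
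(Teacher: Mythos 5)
The proposal fails on two related but fundamental points, both of which stem from applying the ball construction globally on $\TTT$ rather than localizing it to the $U_\alpha$'s as the paper does.

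First, the claimed comparison $|\ln\rho_\eps|-\ln(r_0/r(\B_0))\le C\ln(M_\eps+2)+C$ is false for the global ball construction. From the first item of Proposition~\ref{boulesren}, $r(\B_0)\le c\,\eps^{1/3}|\ln\eps|^{1/6}\sum_i P_i^\eps$, and the best one can say about the total perimeter is $\sum_i P_i^\eps\le M_\eps+C|\ln\eps|$ (your Cauchy--Schwarz step is correct as far as it goes, but the number of droplets with $A_i^\eps\ge\beta$ is itself of order $|\ln\eps|$, so the $C|\ln\eps|$ cannot be improved). Dividing by $\rho_\eps=3^{1/3}\eps^{1/3}|\ln\eps|^{1/6}$ gives $r(\B_0)/\rho_\eps\lesssim M_\eps+|\ln\eps|$, hence $|\ln\rho_\eps|-\ln(r_0/r(\B_0))\lesssim\ln(M_\eps+2)+\ln|\ln\eps|$. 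The extra $\ln|\ln\eps|$ grows unboundedly as $\eps\to0$ and cannot be absorbed into $\ln^2(M_\eps+2)$. This is exactly the obstruction that forces the paper to localize: in each $U_\alpha$ (a ball of radius $r_0/4$), the quantity $\bar r_\alpha:=r(\B_0^\alpha)/\rho_\eps$ is controlled by $n_{\alpha_s}+M_\eps$ where $n_{\alpha_s}$ is the \emph{local} droplet count (see \eqref{radiibound}), which is what ultimately allows the $\ln^2(M_\eps+2)$ bound after one pays the $\ln(n_{\alpha_s}+M_\eps)$ term against the quadratic gain $cn_{\alpha_s}^2$ from the annulus estimate \eqref{lacouronne}. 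Relatedly, your stated ``uniform cap $|\tilde A_i^\eps|^2\le C\ln|\ln\eps|$'' from \eqref{P1.N34} is incorrect: the truncation only guarantees $|\tilde A_i^\eps|^2\le C(A_i^\eps+1)$, and individual $A_i^\eps$ can be comparable to the total area $\sim|\ln\eps|$.

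Second, the choice $g_\eps:=|B|^{-1}\int_B f_\eps\,dx'$ on each ball cannot deliver the $\mathrm{Lip}^*$ bound \eqref{wg}. Your estimate bounds $\int_B\chi(f_\eps-g_\eps)$ by $2r_0\|\nabla\chi\|_\infty\int_B(|f_\eps|+|g_\eps|)$, but $\int_B|f_\eps|$ is \emph{not} of order $\nu^\eps(U_\alpha)+(n_\alpha+M_\eps)\ln(\cdot)$: both $\int_B|\nabla\h|^2$ and $\tfrac{1}{2\pi}|\ln\rho_\eps|\nu^\eps(B)$ are individually of order $|\ln\eps|\,\nu^\eps(B)$ even though they nearly cancel in the signed integral. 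The paper avoids this by not averaging $f_\eps$ directly; it first absorbs the logarithmic singular part using the Lipschitz-test-function version of the ball-construction lower bound (the third item of Proposition~\ref{boulesren}), packaged into $f_{B,\eps}$ together with the discrepancy density $\omega_\eps$ of \eqref{omega}, and only then applies the mass-displacement Lemma~3.1 of \cite{compagnon} to produce a \emph{positive} $g_{B,\eps}$ with $\|f_{B,\eps}-g_{B,\eps}\|_{\mathrm{Lip}^*}\le C\nu^\eps(B)$. Your blueprint needs both the localization to $U_\alpha$ and the mass-displacement machinery (rather than plain averaging) to close.
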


\begin{proof} The proof follows the method of \cite{compagnon},
  involving a localization of the ball construction followed by energy
  displacement.  Here we follow \cite[Proposition 4.9]{SS2}.  One key
  difference is the restriction to $I_\beta$ which means we cover only
  those $\Omega_{i,\eps}'$ satisfying $A_i^{\ep} \geq \beta$ as in
  Proposition (\ref{boulesren}).

\bigskip

\noindent {\it - Step 1: Localization of the ball construction.}\\
We use $U_\alpha$ defined above as the cover on $\TTT$. For each
$U_\alpha$ covering at least one droplet whose volume is greater or
equal than $\beta$ and for any $r\in(r(\B_0), \frac14 r_0)$ we
construct disjoint balls $\B_r^\alpha$ covering all $\Omega_{i,\eps}'$
with $i \in I_{\beta,U_\alpha}$, using
Proposition~\ref{boulesren}. Then choosing a small enough
$\rho\in(r(\B_0), \frac14 r_0)$ independent of $\eps$ (to be specified
below), we may extract from $\cup_\alpha \B_\ro^\alpha$ a disjoint
family which covers $\cup_{i \in I_\beta} \Omega_{i,\eps}'$ as
follows: Denoting by $\mathcal{C}$ a connected component of
$\cup_\alpha \B_\rho^\alpha$, we claim that there exists $\alpha_0$
such that $\mathcal{C}\subset U_{\alpha_0}$. Indeed if $x\in
\mathcal{C}$ and letting $\lambda$ be a Lebesgue number\footnote{A
  Lebesgue number of a covering of a compact set is a number
  $\lambda>0$ such that every subset of diameter less than $\lambda$
  is contained in some element of the covering.}  of the covering of
$\TTT$ by $\{U_\alpha\}_\alpha$ (it is easy to see that in our case
$\frac14 r_0 < \lambda < \frac12 r_0$), there exists $\alpha_0$ such
that $B(x,\lambda)\subset U_{\alpha_0}$. If $\mathcal{C}$ intersected
the complement of $U_{\alpha_0}$, there would exist a chain of balls
connecting $x$ to $(U_{\alpha_0})^c$, each of which would intersect
$U_{\alpha_0}$. Each of the balls in the chain would belong to some $
\B_\rho^{\alpha'}$ with $\alpha'$ such that $\dist(U_{\alpha'},
U_{\alpha_0})\le 2\rho< \frac12 r_0$. Thus, calling $k$ the universal
maximum number of $\alpha'$'s such that $\dist(U_{\alpha'},
U_{\alpha_0})< \frac12 r_0$, the length of the chain is at most $2 k
\rho$ and thus $\lambda \le 2 k \rho$. If we choose $\rho <
\lambda/(2k)$, this is impossible and the claim is proved. Let us then
choose $\rho = \lambda/(4k)$. By the above, each $\mathcal{C}$ is
included in some $U_\alpha$.

We next obtain a disjoint cover of $\cup_{ i \in I_\beta}
\Omega_{i,\eps}'$ from $\cup_\alpha \B_\rho^\alpha$. Let $\mathcal{C}$
be a connected component of $\cup_\alpha \B_\rho^\alpha$. By the
discussion of the preceding paragraph, there exists an index
$\alpha_0$ such that $\mathcal{C}\subset U_{\alpha_0}$.  We then
remove from $\mathcal{C}$ all the balls which do not belong to
$\B_\rho^{\alpha_0}$ and still denote by $\B^{\alpha_0}_\rho$ the
obtained collection. We repeat this process for all the connected
components and obtain a disjoint cover $\B_\rho =
\cup_\alpha\B^\alpha_\rho$ of $\cup_{ i \in I_\beta}
\Omega_{i,\eps}'$. Note that this procedure uniquely associates an
$\alpha$ to a given $B \in \B_\rho$, as well as to each
$\Omega_{i,\eps}'$ for a given $i \in I_\beta$ by assigning to it the
ball in $\B_\rho$ that covers it, and then the $\alpha$ of this
ball. We will use this repeatedly below. We also slightly abuse the
notation by sometimes using $\B_\rho^\alpha$ to denote the union of
the balls in the family $\B_\rho^\alpha$.

We now proceed to the energy displacement.
\medskip

\noindent {\it - Step 2: Energy displacement in the balls.}\\
\noindent Note that by construction every ball in $\B_\rho^\alpha$ is
included in $U_\alpha$. From the last item of
Proposition~\ref{boulesren} applied to a ball $B\in\B_\rho^\alpha$, if
$\varepsilon$ is small enough then, for any Lipschitz non-negative
$\chi$ we have for some $c > 0$ depending only on $\kappa$ and
$\bar\delta$ and a universal $C > 0$
\begin{multline*} \int_{B} \chi \(|\nabla \h|^2+\frac{\kappa^2}{4 |\ln
    \varepsilon|} |\h|^2\) dx' - \frac{1}{2\pi}\(\ln\frac
  \rho{r(\B_0^{\alpha})} - c \rho\)^+ \sum_{i\in I_{\beta,B}} \chi_i
  |\tilde A_i^{\ep}|^2\\ \ge - C \nu^{\varepsilon}(B)
  \|\nab\chi\|_{L^\infty(B)},\end{multline*} where $\nu^{\eps}$ is
defined by \eqref{defnue}. Rewriting the above, recalling the
definition \eqref{newRdef1} and defining $n_{\alpha} \geq 1$ to be the
number of droplets included in $U_{\alpha} \supset B$ and satisfying
$A_i^{\ep} \geq \beta$, we have
\begin{multline*}
  \int_{B} \chi \(|\nabla \h|^2+\frac{\kappa^2}{4 |\ln \varepsilon|}
  |\h|^2\) dx' - \frac{1}{2\pi}\(\ln\frac \rho{n_{\alpha} \rho_{\eps}}
  - c \)\sum_{i\in I_{\beta,B}} \chi_i |\tilde A_i^{\ep}|^2 + \int_{B}
  \chi \omega_{\varepsilon}
  dx' \\
  \ge - C \nu^{\varepsilon}(B) \|\nab\chi\|_{L^\infty(B)} ,
\end{multline*} 
where we set $\bar{r}_\alpha:= \frac{r({\B_0^{\alpha}})}{\ro_\ep}$ and
define (recall that $\alpha$ implicitly depends on $i \in I_\beta$)
\begin{equation}\label{omega} 
  \omega_{\varepsilon}(x')
  := \frac{1}{2\pi}\sum_{i \in I_{\beta}} |\tilde
  A_i^{\ep}|^2 \ln \left( \frac{r(\B_0^{\alpha})}{n_{\alpha}
      \rho_{\eps}} \right) \tilde \delta_i^\eps(x')  =
  \frac{1}{2\pi}\sum_{i \in 
    I_{\beta} } |\tilde A_i^{\ep}|^2 \ln \left(
    \frac{\bar{r}_\alpha}{n_{\alpha}} \right) \tilde \delta_i^\eps(x').
  \end{equation}
  The quantity $\omega_{\ep}$ in some sense measures the discrepancy
  between the droplets $\Omega_{i,\eps}'$ and balls of radius
  $\rho_{\eps}$. We will thus naturally use $M_\ep$ in \eqref{defM} to
  control it. Note also that it is only supported in the droplets,
  hence in the balls of $\B_\rho$.

  Applying Lemma~3.1 of \cite{compagnon} to 
 $$ 
 f_{B,\eps} = \(|\nabla \h|^2 + \frac{\kappa^2}{4 |\ln \varepsilon|}
 |\h|^2- \frac{1}{2\pi} \(\ln\frac \rho{\rho_{\eps} n_{\alpha}} -
 c\)\sum_{i \in I_{\beta,B}} |\tilde A_i^{\ep}|^2\tilde \delta_i^\eps
 \nonumber + \omega_{\varepsilon} \) \indic_{B}
 $$ 
 we deduce the existence of a positive measure $g_{B,\eps}$ such
 that \begin{equation} \|f_{B,\eps} - g_{B,\eps}\|_{\textrm{Lip}^*}
   \le C\nu^{\varepsilon}(B) \label{ballest},\end{equation} where
 $\textrm{Lip}^*$ denotes the dual norm to the space of Lipschitz
 functions and $C > 0$ is universal.
\smallskip

 \noindent {\it - Step 3: Energy displacement on annuli and definition
   of $g_\ep$. }
\\
\noindent We define a set $C_\alpha$ as follows: recall that $\rho$
was assumed equal to $\lambda/(4k)$, where $\lambda \le \frac14 r_0$
and $k$ bounds the number of $\alpha'$'s such that $\dist(U_{\alpha'},
U_\alpha)< \frac12 r_0$ for any given $\alpha$. Therefore the total
radius of the balls in $\B_\rho$ which are at distance less than $r_0$
from $U_\alpha$ is at most $k\rho = \frac{1}{16} r_0$. In particular,
letting $T_\alpha$ denote the set of $t\in
(\frac{r_0}{2},\frac{3r_0}{4})$ such that the circle of center
$x_\alpha$ (where we recall $x_\alpha$ is the center of $U_\alpha$)
and radius $t$ does not intersect $\B_\rho^\alpha$, we have
$|T_\alpha|\ge \frac{3}{16} r_0$.
We let $C_\alpha = \{x\mid |x-x_\alpha|\in T_\alpha\}$ and recall that
$D_\a=B(x_\alpha, \frac{3r_0}{4})$.

Let $t \in T_\alpha$.  Arguing exactly as in the proof of
  \eqref{lecerclelast}, we find that
$$
\int_{\partial B(x_\alpha, t)} |\nab h_\ep'|^2 \, d\mathcal{H}^1(x')
+\frac{\kappa^2}{4 \lep}\int_{B(x_\alpha, t)} |h_\ep'|^2 \, dx'\ge
\frac{ m_{\ep,t}^2 }{2\pi t}\( 1- \frac{\kappa^2 t}{4 \lep}\)
$$
with $m_{\ep, t}:=\int_{B(x_\alpha, t)} (\mu_\ep'(x')
-\bar{\mu}^{\eps} )\, dx'.$ Arguing as in \eqref{lecercle4filter}
and using the fact that $B(x_\alpha, \frac12 r_0)$ contains all the
droplets with $i\in I_{\beta, U_\alpha}$, we find that we can take
$\ep$ sufficiently small depending on $\kappa$, and $r_0$ sufficiently
small depending on $\kappa $ and $\bar{\delta}$ such that for all $t
\in T_\alpha$,
$$
\int_{\partial B(x_\alpha, t)} |\nab h_\ep'|^2 \, d\mathcal{H}^1(x')
+\frac{\kappa^2}{4 \lep}\int_{B(x_\alpha, t)} |h_\ep'|^2 \, dx'\ge
\frac{1}{4\pi t} \Bigg( \sum_{i \in I_{\beta, U_\alpha}} A_i^\ep
\Bigg)^2 .
$$ 
Integrating this over $t\in T_\alpha$, using that $|T_\alpha| \ge
\frac{3}{16} r_0$, we obtain that
\begin{equation}\label{lacouronne}
  \int_{C_\alpha}|\nabla \h|^2 dx' +  \frac{\kappa^2}{ 4 |\ln
    \varepsilon|} 
  \int_{D_\alpha} |\h|^2 dx' \ge c \Bigg(\sum_{i \in
    I_{\beta, U_\alpha}} A_i^{\ep}\Bigg)^2,
\end{equation}  
with $c > 0$ depending only on $r_0$, hence on $\kappa $ and
$\bar{\delta}$. 

We now trivially extend the estimate in \eqref{lacouronne} to all
$\alpha$'s, including those $U_\alpha$ that contain no droplets of
size greater or equal than $\beta$. The overlap number of the sets
$\{C_\alpha\}_\alpha$, defined as the maximum number of sets to which
a given $x' \in \TTT$ belongs is bounded above by the overlap number
of the sets $\{D_\alpha\}_\alpha$, call it $k'$. Since the latter
collection of balls covers the entire $\TTT$, we have $k' \geq 1$.
Then, letting
\begin{align}\nonumber
  f_{\varepsilon}^{\prime} := f_{\varepsilon} - \sum_{B \in \B_{\rho}}
  f_{B,\eps} &= \( |\nabla \h|^2 + \frac{\kappa^2}{2 |\ln
    \varepsilon|} |\h|^2\)\mathbf{1}_{\TTT \backslash \B_{\rho}} +
  \frac{\kappa^2}{4 |\ln \varepsilon|} |\h|^2 \mathbf{1}_{\B_{\rho}}
  \\\label{cwg}&+ \noindent \frac{1}{2\pi} \sum_{i \in I_\beta} \left(
    \ln \frac{\rho}{n_{\alpha}} - c\right) |\tilde A_i^{\ep}|^2 \tilde
  \delta_i^\eps - \omega_{\varepsilon},
\end{align}
and
\begin{align}
  \label{cwga}
  f_{\alpha,\varepsilon} := \frac{1}{2k'} \(|\nabla \h|^2 +
  \frac{\kappa^2}{4 |\ln \varepsilon|} |\h|^2\) \indic_{C_\alpha} +
  \frac{1}{2\pi}\sum_{i\in I_{\beta,
      \B_\rho^\alpha}}|\tilde{A}_i^{\ep}|^2 \(\ln\frac
  \rho{n_{\alpha}} - c \) \tilde \delta_i^\eps -
  \omega_{\varepsilon}\indic_{\B_\rho^\alpha},
\end{align}
we have 
\begin{align}\nonumber
  f_{\varepsilon}' - \sum_\alpha f_{\alpha,\varepsilon} & \ge
  \(|\nabla \h|^2 + \frac{\kappa^2}{2 |\ln \varepsilon|}
  |\h|^2\)\indic_{\TTT \sm\B_\rho} \\ &- \frac{1}{2k'} \sum_\alpha
  \(|\nabla \h|^2 + \frac{\kappa^2}{4 |\ln \varepsilon|} |\h|^2\)
  \indic_{C_\alpha} + \frac{\kappa^2}{4 |\ln \varepsilon|} |\h|^2
  \mathbf{1}_{\B_{\rho}} \nonumber \\ &\ge \hal \(|\nabla \h|^2 +
  \frac{\kappa^2}{2 |\ln \varepsilon|} |\h|^2\) \indic_{\TTT
    \sm\B_\rho} + \frac{\kappa^2}{4 |\ln \varepsilon|} |\h|^2
  \mathbf{1}_{\B_{\rho}} \ge 0 \label{denfp}
\end{align} 
and from \eqref{lacouronne}
\begin{align}
  f_{\alpha,\varepsilon}(D_\alpha) & = \frac{1}{2k'} \int_{C_\alpha}
  \left( |\nabla \h|^2 + \frac{\kappa^2}{4 |\ln \varepsilon|} |\h|^2
  \right) dx' + \frac{1}{2\pi} \(\ln\frac{\rho}{n_{\alpha}} - c\)
  \sum_{i \in I_{\beta, \B_\rho^\alpha}} |\tilde A_i^{\ep}|^2 -
  \omega_{\eps} (D_\alpha) \nonumber \\
  \label{annulibound} &\ge c \Bigg( \sum_{i \in I_{\beta, U_\alpha}
  }A_i^{\ep}\Bigg)^2 - \frac{1}{2\pi}\ln n_{\alpha} \sum_{i \in
    I_{\beta, \B_\rho^\alpha}} |\tilde A_i^{\ep}|^2 -
  \omega_{\eps}(D_{\alpha}) - C \sum_{i \in I_{\beta, \B_\rho^\alpha}}
  |\tilde A_i^{\ep}|^2,
\end{align} 
for some $C, c > 0$ depending only on $\kappa$ and $\bar\delta$.  Now
we combine the middle two terms, using the definition of
$\omega_{\eps,\alpha}$ in \eqref{omega}, to obtain
\begin{align}\label{annulibound2}
  f_{\alpha,\varepsilon}(D_\alpha)\ge c\Bigg( \sum_{i \in I_{\beta,
        U_\alpha} }A_i^{\ep}\Bigg)^2 - \frac{1}{2\pi} \ln
  \bar{r}_\alpha \sum_{i \in I_{\beta, \B_\rho^\alpha}} |\tilde
  A_i^{\ep}|^2- C \sum_{i \in I_{\beta, \B_\rho^\alpha}} |\tilde
  A_i^{\ep}|^2.
\end{align}

The next step is to bound $\bar{r}_\alpha.$ We separate those
$\Omega_{i,\eps}'$ with $A_i^{\ep} \geq 3^{2/3} \pi \gamma^{-1}$ and
those with $A_i^{\ep} < 3^{2/3} \pi \gamma^{-1}$.  We denote (with $s$
for ``small'' and $b$ for ``big'')
\begin{align*} 
  I_{\beta,\alpha}^s &= \left\{i \in I_{\beta,U_\alpha} :
    A_i^{\ep} \leq 3^{2/3} \pi \gamma^{-1}\right\},\\
  I_{\beta,\alpha}^b &= I_{\beta, U_\alpha} \backslash I_{\beta,\alpha}^s,\\
  n_{\alpha_s} &= \# I_{\beta,\alpha}^s.
\end{align*}
For the small droplets, we use the obvious bound 
\begin{equation}\label{volboundsmall} 
  \sum_{i \in
    I_{\beta,\alpha}^s} |A_i^{\ep}|^{1/2} \leq \ c
  n_{\a_s}, 
\end{equation}
with a universal $c > 0$, while for the large droplets we use that in
view of the definition of $M_\eps$ in \eqref{defM} we have
\begin{equation}\label{volbound} 
  \sum_{i \in
    I_{\beta,\alpha}^b} |A_i^{\ep}|^{1/2} \leq C 
  \sum_{ i \in I_{\beta,\alpha}^b} A_i^{\ep} \le C'
  M_{\varepsilon},
\end{equation}  
for some universal $C,C' > 0$. We can now proceed to controlling
$\bar{r}_\alpha$.  By \eqref{newRdef1} and \eqref{rB00}, for
universally small $\eps$ we have
\begin{equation}\label{rbound2} \bar{r}_\alpha \leq C \sum_{i \in 
    I_{\beta, U_\alpha}} P_i^\ep,
\end{equation}
for some universal $C > 0$.  In view of \eqref{defM}, \eqref{volbound}
and \eqref{volboundsmall}, we deduce from Remark \ref{rem-Meps} that
for universally small $\eps$ we have
\begin{align} 
  \nonumber \bar{r}_\alpha & \le C\Big( {M_{\varepsilon}}
  + \sqrt{4 \pi} \sum_{i \in I_{\beta, U_\alpha}} |A_i^{\ep}|^{1/2} \Big) \\
  & \le C\Big(M_{\varepsilon} + c n_{\alpha_s} + C'
  M_{\varepsilon}\Big) \leq C'' (n_{\alpha_s} + M_\ep) \leq C'' (1 +
  n_{\alpha_s} + M_\eps), \label{radiibound}
\end{align}
where $c, C, C', C'' > 0$ are universal.  Therefore,
(\ref{annulibound2}) becomes
\begin{multline} \label{annulibound4} f_{\alpha, \ep} (D_\alpha)\ge
  c\Bigg( \sum_{i \in I_{\beta, \alpha}^s}A_i^{\ep}\Bigg)^2 + c\Bigg(
  \sum_{i \in I_{\beta,\alpha}^b}A_i^{\ep}\Bigg)^2 - C \ln
  \bar{r}_\alpha \sum_{i \in I_{\beta, \B_\rho^\alpha}} |\tilde
  A_i^{\ep}|^2- C''' \sum_{i \in I_{\beta, \B_\rho^\alpha}}
  |\tilde A_i^{\ep}|^2,\\
  \geq c\beta^2 n_{\alpha_s}^2 + c\Bigg( \sum_{i \in I_{\beta
      ,\alpha}^b}A_i^{\ep}\Bigg)^2 - C' \ln (C''(1 + n_{\alpha_s} +
  M_{\varepsilon})) \Bigg( n_{\alpha_s} + \ \sum_{i \in I_{\beta,
      \alpha}^b} A_i^{\ep} \Bigg) ,
\end{multline}
where $C, C' > 0$ are universal, $c, C'', C''' > 0$ depend only on
$\kappa$ and $\bar\delta$, and $C'$ was chosen so that $C |\tilde
A_i^{\eps}|^2\le C'(A_i^{\eps}+1)$.

We now claim that this implies that
\begin{equation}\label{caf2}
  f_{\alpha, \ep}(D_\alpha) \ge
  \frac{c}{2}\beta^2 n_{\alpha_s}^2 + \frac{c}{2}\Bigg( \sum_{i \in
    I_{\beta,\alpha}^b}A_i^{\ep} \Bigg)^2-  C'''\ln^2(
  M_{\varepsilon} +2), 
\end{equation}
where $C''' > 0$ depends only on $\kappa$ and $\bar{\delta}$.  This is
seen by minimization of the right-hand side, as we now detail. For the
rest of the proof, all constants will depend only on $\kappa$ and
$\bar{\delta}$.  For shortness, we will set $X :=\sum_{i \in
  I_{\beta,\alpha}^b}A_i^{\ep}$.

First assume $n_{\alpha_s} =0$. Then \eqref{annulibound4} can be
rewritten
$$
f_{\alpha, \ep} (D_\alpha) \ge c X^2 - C' \ln (C'' (1 +
M_{\varepsilon}))) X,
$$ 
By minimization of the quadratic polynomial in the right-hand side, we
easily see that an inequality of the form \eqref{caf2} holds.  Second,
let us consider the case $n_{\alpha_s}\ge 1$. We may use the obvious
inequality $\log (1 + x+y) \le \log (1 + x)+\log (1 + y)$ that holds
for all $x \geq 0$ and $y \geq 0$ to bound from below
\begin{multline}\label{bfb}
  \frac{c}{2}\beta^2 n_{\alpha_s}^2 + \frac{c}{2} X^2- C' \ln (C''(1 +
  n_{\alpha_s} + M_{\varepsilon})) (n_{\alpha_s} + X) \geq
  \frac{c}{2}\beta^2 n_{\alpha_s}^2 + \frac{c}{2}X^2 \\
  - C (n_{\alpha_s} + X) - Cn_{\alpha_s} \ln ( n_{\alpha_s} +1)- C X
  \ln ( n_{\alpha_s}+1) - C \ln (M_\ep+1) (n_{\alpha_s} + X).
\end{multline}
It is clear that the first three negative terms on the right-hand
side can be absorbed into the first two positive terms, at the expense
of a possible additive constant, which yields
\begin{multline}
  \label{bfb2}
  \frac{c}{2}\beta^2 n_{\alpha_s}^2 + \frac{c}{2} X^2- C' \ln
  (C''(n_{\alpha_s} + M_{\varepsilon})) (n_{\alpha_s} + X) \\ \ge
  \frac{c}{4}\beta^2 n_{\alpha_s}^2 + \frac{c}{4}X^2 - C \ln (M_\ep+1)
  (n_{\alpha_s} + X) - C.
\end{multline}
Then by quadratic optimization the right hand side of \eqref{bfb2} is
bounded below by $- C \ln^2 (M_\ep+2)$ (after possibly changing the
constant). Inserting this into \eqref{annulibound4}, we obtain
\eqref{caf2}.

We then apply \cite[Lemma~3.2]{compagnon} over $D_\alpha $ to
$f_{\alpha,\varepsilon} + C''' |D_\alpha|^{-1} \ln^2 (M_{\varepsilon}
+2) $, where $C'''$ is the constant in the right-hand side of
(\ref{caf2}).  We then deduce the existence of a measure
$g_{\alpha,\varepsilon}$ on $\TTT$ supported in $D_\alpha$ such that
$g_{\alpha,\varepsilon}\ge - C''' |D_\alpha|^{-1} \ln^2
(M_{\varepsilon} +2)$ and such that for every Lipschitz function
$\chi$
\begin{multline}\label{611b}\left|
  \int_{D_\alpha} \chi (f_{\alpha,\varepsilon} -
  g_{\alpha,\varepsilon}) \, dx' \right|\le 2 \, \diam
  (D_\alpha)\|\nab\chi\|_{L^\infty(D_\alpha)} f_{\alpha,\varepsilon}^-
  (D_\alpha)\\ \le C \ln (n_{\alpha_s} +M_\ep + 2)
  \|\nab\chi\|_{L^\infty(D_\alpha)}\sum_{i \in I_{\beta,
      \B_\rho^\alpha}} |\tilde A_i^{\ep}|^2,
\end{multline} 
and we have used the observation that
\begin{equation}\label{rewritf}
  f_{\alpha,\varepsilon} = \frac{1}{2k'} \(|\nabla \h|^2 +
  \frac{\kappa^2}{4 |\ln \varepsilon|} |\h|^2\) \indic_{C_\alpha} +
  \frac{1}{2\pi} \sum_{i\in I_{\beta, \B_\rho^\alpha}}\(\ln\frac
  \rho{\bar{r}_\alpha } - C \) |\tilde{A}_i^{\ep}|^2 \delta_i^\eps,
\end{equation} 
and \eqref{radiibound} to bound the negative part of $f_{\alpha,
  \ep}$.  In particular, taking $\chi = 1$, we deduce, in view of
\eqref{caf2}, that
\begin{equation}\label{galphan} 
  g_{\alpha,\varepsilon}(D_\alpha) =
  f_{\alpha,\varepsilon}(D_\alpha) \ge \frac{c}{2} \beta^2 n_{\alpha_s}^2 +
  \frac{c}{2}\Bigg( \sum_{i \in I_{\beta,\alpha}^b}A_i^{\ep}\Bigg)^2  -
  C''' \ln^2( M_{\varepsilon} + 2 ),
\end{equation}  
from which it follows that 
\begin{equation}\label{gea}
  g_{\alpha, \ep}(D_\alpha) \ge c' \(n_{\alpha_s}^2 + (\#I_{\beta,
    \alpha})^2\) - C''' \ln^2( M_{\varepsilon} + 2 ) 
  \ge \hal c' n_{\alpha}^2
  - C''' \ln^2( M_{\varepsilon} + 2 ).
\end{equation} 
Recalling the positivity of $g_{B,\eps}$ introduced in Step 2, we now
let
\begin{equation}\label{defg}
  g_{\varepsilon} := \sum_{B\in\B_\rho} g_{B,\varepsilon} +
  \sum_{\alpha}g_{\alpha,\varepsilon} + \(f_{\varepsilon}' -
  \sum_{\alpha} f_{\alpha,\varepsilon}\),
\end{equation} 
and observe that since $f_{\varepsilon}^{\prime} - \sum_\alpha
f_{\alpha,\varepsilon}$ is also non-negative by \eqref{denfp}, and
since $\sum_\alpha g_{\alpha,\varepsilon}$ is bounded below by $-k'
C''' |D_\alpha|^{-1} \ln^2( M_{\varepsilon} + 2 )$, where, as before,
$k'$ is the overlap number of $\{D_\alpha\}_\alpha$, we have $g_\ep
\ge - c \ln^2 (M_{\varepsilon} + 2 )$ for some $c > 0$ depending only
on $\kappa$ and $\bar \delta$, which proves the first item. The second
item follows from \eqref{gea}, \eqref{defg} and the positiveness of
$g_{B, \varepsilon}$ and $\(f_{\varepsilon}' - \sum_{\alpha}
f_{\alpha,\varepsilon}\)$.

  \medskip

  \noindent{- \it  Step 4: Proof of the last item}.\\
\noindent Using the definition of $g_{\varepsilon}$ in \eqref{defg},
for any Lipschitz $\chi$ we have
$$
\int_\TTT \chi g_{\varepsilon} dx' = \sum_{B\in\B_\rho} \int_\TTT \chi
g_{B,\varepsilon} dx' + \sum_{\alpha}\int_\TTT \chi
(g_{\a,\varepsilon}-f_{\a,\varepsilon}) dx' + \int_\TTT \chi
f_{\varepsilon}' dx'.
$$
Hence, in view of \eqref{ballest}, \eqref{cwg} and \eqref{611b} we
obtain for some $C > 0$
\begin{multline}\label{gchi}
 \left| \int_\TTT\chi (f_{\varepsilon}-g_{\varepsilon}) dx'\right| \le
  \sum_{B\in\B_\rho}\left| \(\int_\TTT \chi (g_{B,\varepsilon} -
  f_{B,\varepsilon}) dx' \) \right|+ \sum_{\alpha}\left| \int_\TTT \chi
  (g_{\alpha,\varepsilon} - f_{\alpha,\varepsilon}) dx'\right| \\
  \leq C
  \sum_{B\in\B_\rho}\nu^{\varepsilon}(B)\|\nabla\chi\|_{L^\infty(B)}+
  C \sum_\alpha \ \ln (n_{\alpha_s} +M_\ep + 2
  )\|\nabla\chi\|_{L^\infty(D_\alpha)}\sum_{i\in I_{\beta,
      \B_\rho^\alpha} } |\tilde A_i^{\ep}|^2.
\end{multline}
Using that $|\tilde A_i^{\ep}|^2\le C(A_i^{\ep}+1)$ for a universal $C
> 0$ and  \eqref{volbound}, we have 
$$\sum_{i\in I_{\beta,
    \B_\rho^\alpha} } |\tilde A_i^{\ep}|^2 \le C(n_{\alpha_s}+
M_\ep).$$ Since $n_{\alpha_s}\le n_{\alpha}$, the third item follows
from \eqref{gchi}.
\end{proof}

We now apply Proposition \ref{wspread} to establish uniform bounds on
$M_\eps$, which characterizes the deviation of the droplets from the
optimal shape.
\begin{pro} \label{Mbound} If \eqref{ZO.1} holds, then $M_\ep $ is
  bounded by a constant depending only on $\sup_{\ep > 0} F^\ep
  [u^\ep]$, $\kappa$, $\bar{\delta}$ and $\ell$.
\end{pro}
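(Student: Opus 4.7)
The plan is to derive the estimate \eqref{aprouver} as an essentially immediate corollary of Proposition \ref{wspread} by testing the displaced energy density $g_\ep$ against a constant function, and then to combine this with the a priori bound \eqref{aprioribound} to obtain a self-improving algebraic inequality on $M_\ep$.

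First I would apply the third item of Proposition \ref{wspread} with the Lipschitz function $\chi\equiv 1$ on $\TTT$. Since then $\|\nab\chi\|_{L^\infty(D_\alpha)}=0$ for every $\alpha$, the right-hand side of \eqref{wg} vanishes identically, giving $\int_\TTT f_\ep\,dx' = \int_\TTT g_\ep\,dx'$. The first item of the proposition furnishes the pointwise lower bound $g_\ep \ge -c\,\ln^2(M_\ep+2)$, and integrating this over the torus of area $\ell^2\lep$ produces
\begin{equation*}
\int_\TTT g_\ep\,dx' \;\ge\; -c\,\ell^2\,\lep\,\ln^2(M_\ep+2).
\end{equation*}
Unpacking the definition \eqref{deff} of $f_\ep$ and using that $\int_\TTT \tilde\delta_i^\ep\,dx' = 1$, the left-hand side equals
\begin{equation*}
\int_\TTT\!\left(|\nab h_\ep'|^2+\frac{\kappa^2}{2\lep}|h_\ep'|^2\right)dx' \;-\; \frac{|\ln\rho_\ep|}{2\pi}\sum_{i\in I_\beta}|\tilde A_i^\ep|^2.
\end{equation*}
The form of \eqref{aprouver} has coefficient $\kappa^2/\lep$ rather than $\kappa^2/(2\lep)$ in front of $|h_\ep'|^2$; the missing term $\tfrac{\kappa^2}{2\lep}\int_\TTT|h_\ep'|^2\,dx'$ is nonnegative, so adding it only strengthens the lower bound, and \eqref{aprouver} follows with a constant depending only on $\kappa$, $\bar\delta$, and $\ell$.

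To conclude I would substitute \eqref{aprouver} into the a priori bound \eqref{aprioribound}. The $\lep$ cancels between the two expressions, and the result is an algebraic inequality of the form $M_\ep - C'\ln^2(M_\ep+2) \le C''$, where $C'$ and $C''$ depend only on $\kappa$, $\bar\delta$, $\ell$ and $\sup_\ep F^\ep[u^\ep]$. Because $\ln^2(x+2)=o(x)$ as $x\to\infty$, this inequality forces $M_\ep$ to be bounded by a constant with the same dependence, which is exactly the statement of the proposition.

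The substantive content of the proof lies entirely inside Proposition \ref{wspread}; the present proposition is its algebraic consequence and presents no serious obstacle. The only points that deserve a quick verification are that $\chi\equiv 1$ is genuinely admissible in the third item of that proposition (trivial, since constants are Lipschitz on the torus) and that the factor-of-two discrepancy in the $L^2$-coefficient is harmlessly absorbed by a nonnegativity argument, as sketched above.
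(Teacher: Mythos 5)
Your proposal is correct and follows essentially the same route as the paper's proof: apply the third item of Proposition~\ref{wspread} with $\chi\equiv 1$ and the first item to get $\int_\TTT f_\ep\,dx' = \int_\TTT g_\ep\,dx' \ge -C\lep\ln^2(M_\ep+2)$, then feed this into the lower bound \eqref{minoF} (equivalently, your route via \eqref{aprouver} and \eqref{aprioribound}), absorbing the extra nonnegative $\tfrac{\kappa^2}{2\lep}\int|h_\ep'|^2$ term, to arrive at the self-improving inequality $M_\ep \le C' + C\ln^2(M_\ep+2)$, which forces $M_\ep$ bounded. The paper states this more compactly but your reasoning and bookkeeping of constants and the factor-of-two discrepancy are accurate.
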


\begin{proof}From the last item of Proposition \ref{wspread} applied
  with $\chi \equiv 1$ together with the first item, we
  have $$\int_{\TTT} f_\ep dx'= \int_{\TTT} g_\ep dx' \ge - C\lep
  \ln^2 (M_\ep +2) ,
$$ 
with some $C > 0$ depending only on $\kappa$, $\bar \delta$ and
$\ell$, while from \eqref{ZO.1}, \eqref{minoF} and \eqref{deff}, we
have
$$
C' \ge \ell^2 F^\ep [u^\ep] \ge M_\ep +\frac{2}{\lep} \int_{\TTT}
f_\ep dx' +o(1) \ge M_\ep- C \ln^2 (M_\ep +2) +o_\eps(1),
$$
for some $C' > 0$ depending only on $\sup_{\ep > 0} F^\ep [u^\ep]$,
$\kappa$, $\bar{\delta}$ and $\ell$. The claimed result easily
follows.
\end{proof}

With the help of Proposition \ref{Mbound}, an immediate consequence of
Proposition \ref{wspread} is the following conclusion.

\begin{coro}
  \label{c-gepsC}
  There exists $C > 0$ depending only on $\kappa$, $\bar \delta$,
  $\ell$ and $\sup_{\ep > 0} F^\ep [u^\ep]$ such that if $g_\eps$ is
  as in Proposition \ref{wspread} and \eqref{ZO.1} holds, then $g_\eps
  \geq -C$.
\end{coro}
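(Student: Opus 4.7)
The proof is essentially immediate from combining the two preceding results, so my plan is very short. From the first item of Proposition \ref{wspread}, we have the pointwise lower bound
\[
g_\eps \geq -c \ln^2(M_\eps + 2) \quad \text{on } \mathbb{T}_{\ell^\eps}^2,
\]
where $c > 0$ depends only on $\kappa$ and $\bar\delta$. The only ``variable'' on the right-hand side is $M_\eps$, so what one needs is a uniform bound on $M_\eps$.

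That uniform bound is precisely the content of Proposition \ref{Mbound}: under the hypothesis \eqref{ZO.1}, $M_\eps \leq C_0$ with $C_0$ depending only on $\sup_{\ep > 0} F^\ep[u^\ep]$, $\kappa$, $\bar\delta$ and $\ell$. Substituting this bound into the previous display yields
\[
g_\eps \geq -c \ln^2(C_0 + 2) =: -C,
\]
with $C$ depending only on the same quantities, which is the desired conclusion.

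There is no obstacle here; the corollary is a clean consequence of the pointwise estimate from Proposition \ref{wspread} together with the a priori bound from Proposition \ref{Mbound}. The only thing worth remarking is that the logarithmic dependence of the lower bound on $M_\eps$, which was the whole point of the careful energy displacement construction in Section \ref{sec5}, is what makes the combination produce a finite constant once $M_\eps$ itself is controlled.
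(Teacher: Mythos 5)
Your proof is correct and matches the paper's reasoning exactly: the paper itself presents the corollary as an immediate consequence of the first item of Proposition \ref{wspread} combined with the uniform bound on $M_\eps$ from Proposition \ref{Mbound}. Nothing more needs to be said.
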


In the following, we also define the modified energy density $\bar
g_\ep$, in which we include back the positive terms of $M_\ep$ and a
half of $ \frac{\kappa^2}{\lep} |h_\ep'|^2$ that had been ``kept
aside'' instead of being included in $f_\ep$:
\begin{multline} 
  \label{barg} \bar g_\ep := g_\ep + \frac{\kappa^2}{2\lep} |h_\ep'|^2
  + \lep \Bigg\{ \sum_i (P_i^\ep- \sqrt{4 \pi A_i^{\ep}} \,
  \tilde{\delta}_i) + c_1 \sum_{A_i^{\ep} > \pi 3^{2/3} \gamma^{-1}
  } A_i^{\ep}\tilde{\delta}_i \\
  + c_2 \sum_{\beta \le A_i^{\ep} \le \pi 3^{2/3} \gamma^{-1}}
  (A_i^{\ep} - \pi \bar r_{\eps}^2)^2 \tilde{\delta}_i+ c_3
  \sum_{A_i^{\ep} <\beta} A_i^{\ep}\tilde{\delta}_i \Biggr\}
\end{multline} 
where we recall $\bar r_{\varepsilon} = \(\frac{|\ln
  \varepsilon|}{|\ln \rho_{\eps}|}\)^{1/3}$ and $\tilde \delta_i^\eps$
is defined by \eqref{fakedirac}.  These extra terms will be used to
control the shapes and sizes of the droplets as well as to control
$h_\ep'$.  We also point out that in view of \eqref{deff}, \eqref{wg}
and \eqref{minoF}, we have
\begin{equation}\label{Fg}
  \ell^2F^\ep[u^\ep] \ge \frac{2}{\lep} \int_{\TTT} \, \bar{g}_\ep dx'
  + o_\eps(1).
\end{equation}

\section{Convergence}\label{sec6}

In this section we study the consequences of the hypothesis
\begin{equation}
  \forall R > 0, \ {\cal C}_{R} := \limsup_{\varepsilon \to 0}
  \int_{\UR} \bar{g}_{\varepsilon}(x + x^0_\eps )dx < +\infty, 
\end{equation}
where $K_R = [-R,R]^2$ and $(x_\eps^0)$ is such that $x_\eps^0 + K_R
\subset \TTT$. This corresponds to ``good'' blow up centers
$x_\eps^0$, and will be satisfied for most of them.

In order to obtain $o_{\eps}(1)$ estimates on the energetic cost of
each droplet under this assumption, we need good quantitative
estimates for the deviations of the shape of the droplets from balls
of the same volume. A convenient quantity that can be used to
characterize these deviations is the {\em isoperimetric deficit},
defined as (in two space dimensions)
\begin{align}
  \label{isodef}
  D(\Omega_{i,\eps}') := { | \partial \Omega_{i,\eps}'| \over \sqrt{4
      \pi |\Omega_{i,\eps}'|}} - 1.
\end{align}
The isoperimetric deficit may be used to bound several types of
geometric characteristics of $\Omega_{i,\eps}'$ that measure their
deviations from balls. The quantitative isoperimetric inequality,
which holds for any set of finite perimeter, may be used to estimate
the measure of the symmetric difference between $\Omega_{i,\eps}'$ and
a ball. More precisely, we have \cite{fusco08}
\begin{equation}\label{strongiso2} 
  \alpha(\Omega_{i,\eps}')\le C \sqrt{D(\Omega_{i,\eps}')}, 
\end{equation} 
where $C > 0$ is a universal constant and $\alpha(\Omega_{i,\eps}')$
is the Fraenkel asymmetry defined as
\begin{align}
  \label{fraen}
  \alpha(\Omega_{i,\eps}') := \min_{B} \frac{|\Omega_{i,\eps}'
    \triangle B|}{|\Omega_{i,\eps}'|},
\end{align}
where $\triangle$ denotes the symmetric difference between the two
sets, and the infimum is taken over balls $B$ with
$|B|=|\Omega_{i,\eps}'|$. In the following, we will use the notation
$r_i^\eps$ and $a_i^\eps$ for the radii and the centers of the balls
that minimize $\alpha(\Omega_{i,\eps}')$, respectively.

On the other hand, in two space dimensions the following inequality
due originally to Bonnesen \cite{bonnesen24} (for a review, see
\cite{osserman79}) is applicable to $\Omega_{i,\eps}'$:
\begin{align}
  \label{bonnes}
  R_i^\eps \leq r_i^\eps \left( 1 + c \sqrt{D(\Omega_{i,\eps}')}
  \right).
\end{align}
Here $R_i^\eps$ is the radius of the circumscribed circle of the
measure theoretic interior of $\Omega_{i,\eps}'$ and $c > 0$ is
universal.  Indeed, apply Bonnesen inequality to the saturation of
$\Omega_{i,\eps}'$ (i.e., the set with no holes) for each
droplet. Then since the set $\Omega_{i,\eps}'$ is connected and,
therefore, its saturation has, up to negligible sets, a Jordan
boundary \cite{ambrosio2}, Bonnesen inequality applies to it.

\subsection{Main result}

We will obtain local lower bounds in terms of the renormalized energy
for a finite number of Dirac masses in the manner of \cite{BBH}:
\begin{definition}\label{WR}
  For any function $\chi$ and $\varphi \in \mathcal{A}_m$
  (cf. Definition \ref{defam}), we denote
  \begin{equation}W(\varphi, \chi) = \lim_{\eta\to 0} \(
    \hal\int_{\mr^2 \backslash \cup_{p\in\Lambda} B(p,\eta) }\chi
    |\nabla \varphi|^2 dx + \pi \ln \eta \sum_{p\in\Lambda} \chi (p)
    \).
\end{equation}
\end{definition}

We now state the main result of this section and postpone its proof to
Section \ref{LboundW}. Throughout the section, we use the notation of
Sec. \ref{sec5}. To further simplify the notation, we periodically
extend all the measures defined on $\TTT$ to the whole of $\mathbb
R^2$, without relabeling them. We also periodically extend the ball
constructions to the whole of $\mathbb R^2$. This allows us to set,
without loss of generality, all $x_\eps^0 = 0$.

\begin{theorem}\label{Wlbound} Under assumption \eqref{ZO.1}, the
  following holds.
 \begin{itemize}
  \item[ 1.] Assume that for any $R>0$ we have
\begin{equation} \label{gbound}
 \limsup_{\varepsilon \to 0} \bar{g}_{\varepsilon}(\UR) < +\infty,
\end{equation}
where $K_R=[-R,R]^2$.  Then, up to a subsequence, the measures
$\mu_{\varepsilon}'$, defined in \eqref{rescaledmu}, converge in
$(C_0(\mathbb{R}^2))^*$ to a measure of the form $\nu = \ 3^{2/3} \pi
\sum_{a \in \Lambda} \delta_a$ where $\Lambda$ is a discrete subset of
$\mathbb{R}^2$, and $\{\varphi^{\eps}\}_{\varepsilon}$ defined in
\eqref{phiepsdef} converge weakly in $\dot
W_{loc}^{1,p}(\mathbb{R}^2)$ for any $p \in (1,2)$ to $\varphi$ which
satisfies
\[-\Delta \varphi = 2\pi \sum_{a \in \Lambda} \delta_{a} - m \textrm{
  in } \mathbb{R}^2,\] in the distributional sense, with
$m=3^{-2/3}(\bar \delta - \bar \delta_c)$. Moreover, for any sequence
$\{\Omega_{i_\ep, \ep}\}_{\ep} $ which remains in $K_R$, up to a
subsequence, the following two alternatives hold:
 \begin{itemize}
 \item[i.]  Either $A_{i_\ep}^\ep \le \frac{C_R}{\lep}$ and
   $P_{i_\ep}^\ep \le \frac{C_R}{\sqrt{\lep}} $ as $\ep \to 0$,
 \item[ii.] Or $A_{i_\ep}^\ep$ is bounded below by a positive constant
   as $\ep \to 0$, and
 $$
 A_{i_\ep}^\ep \to 3^{2/3} \pi \text{ and } P_{i_\ep}^\ep \to 2 \cdot
 3^{1/3} \pi \quad \text{as} \ \ep \to 0,
 $$
 with
 \begin{equation}\label{frankela}
   \a(\Omega_{i_\ep, \ep}') \le \frac{C_R}{\lep^{1/2}} \quad \text{as} \ \ep
   \to 0,
  \end{equation}
\end{itemize}
for some $C_R > 0$ independent of $\eps$.

\item[ 2.] If we replace \eqref{gbound} by the stronger assumption
\begin{equation}
  \label{gbound2} \limsup_{\varepsilon \to 0} \bar g_{\varepsilon}(\UR)
  < CR^2,
\end{equation}
where $C > 0$ is independent of $R$, then we have for any $p \in
(1,2)$,
\begin{equation}
  \limsup_{R \to +\infty} \left( {1 \over |K_R|}  \int_{\UR} |\nabla
    \varphi|^p dx \right) < + \infty.
\end{equation}
Moreover, for every family $\{\chi_R\}_{R>0}$ defined in Definition
\ref{Wvdef} we have 
\begin{equation} 
  \label{Wlbound2} 
  \liminf_{\varepsilon \to 0} \int_{\mathbb{R}^2} \chi_R
  \bar g_{\varepsilon} dx \geq {3^{4/3} \over 2} W(\varphi,\chi_R) +
  \frac{3^{4/3} \pi }{8} \sum_{a \in \Lambda} \chi_R(a) + o(|K_R|).
\end{equation}
\end{itemize}

\end{theorem}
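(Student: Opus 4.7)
The plan is to exploit the bound $\bar g_\ep(K_R)\le C_R$ on two levels: first to pin down the geometry of each droplet in $K_R$, then to extract the renormalized energy via the ball construction of Section \ref{sec4}. My starting point would be the observation from \eqref{barg} that $\bar g_\ep$ contains, besides the ball-constructed density $g_\ep$ and the $|h_\ep'|^2$-term, the four positive $M_\ep$-summands multiplied by $\lep$. So under \eqref{gbound} I would conclude, for every droplet $\Omega_{i,\ep}'\subset K_R$ and all small $\ep$: no droplet of area $>3^{2/3}\pi\gamma^{-1}$ can occur (otherwise $\lep c_1 A_i^\ep$ would blow up); droplets with area in $[\beta,3^{2/3}\pi\gamma^{-1}]$ satisfy $(A_i^\ep-\pi\bar r_\ep^2)^2\le C_R/\lep$, so $A_i^\ep\to 3^{2/3}\pi$; and droplets of area $<\beta$ satisfy $A_i^\ep\le C_R/\lep$. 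The perimeter deficit bound $\lep(P_i^\ep-\sqrt{4\pi A_i^\ep})\le C_R$ then controls each $P_i^\ep$, giving the dichotomy (i)-(ii), and the quantitative isoperimetric inequality \eqref{strongiso2} yields \eqref{frankela} since $D(\Omega_{i,\ep}')\le C_R/\lep$ in case (ii). The finiteness of case (ii) droplets in $K_R$ follows from the $n_\alpha^2$ bound in Proposition \ref{wspread} combined with Proposition \ref{Mbound}.

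\textbf{Convergence.} I would next pass to the limit in the measures and in the equation. Case (i) droplets contribute only $O(1/\lep)$ total mass to $\mu_\ep'(K_R)$, while each case (ii) droplet becomes a point mass of weight $3^{2/3}\pi$. Extracting a subsequence so that the centers of case (ii) droplets converge to a discrete $\Lambda\subset\mathbb R^2$ yields $\mu_\ep'\to 3^{2/3}\pi\sum_{a\in\Lambda}\delta_a$ in $(C_0(\mathbb R^2))^*$. Passing to the limit in \eqref{heqn1}, using the decay of the $\kappa^2/\lep$ term, the convergence $\bar\mu^\ep\to\bar\mu$ from \eqref{convermubar}, and the rescaling $\varphi=2\cdot 3^{-2/3}h$, produces $-\Delta\varphi=2\pi\sum_{a\in\Lambda}\delta_a-m$ with $m=3^{-2/3}(\bar\delta-\bar\delta_c)$. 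Weak convergence in $\dot W^{1,p}_{loc}$ for $p\in(1,2)$ then follows from standard elliptic regularity, since the right-hand side of \eqref{heqn1} is uniformly bounded in $W^{-1,p}_{loc}$.

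\textbf{Lower bound and renormalized energy.} Under the stronger hypothesis \eqref{gbound2} the same regularity argument upgrades this to the claimed uniform-in-$R$ estimate $\limsup_R |K_R|^{-1}\int_{K_R}|\nabla\varphi|^p\,dx<\infty$. For \eqref{Wlbound2} itself my plan is to run Proposition \ref{boulesren} up to total radius $r=\eta>0$ fixed: the resulting disjoint balls $\B_\eta$ cover all case (ii) droplets in $K_R$, and item 3 applied to $\chi=\chi_R$ gives
\[
\int_{\B_\eta}\chi_R\Bigl(|\nabla h_\ep'|^2+\tfrac{\kappa^2}{4\lep}|h_\ep'|^2\Bigr)dx'\ge\tfrac{1}{2\pi}\bigl(\ln(\eta/r(\B_0))-c\eta\bigr)\sum_i\chi_R(a_i^\ep)|\tilde A_i^\ep|^2+o_\ep(1).
\]
Outside $\B_\eta$, weak $\dot W^{1,p}_{loc}$-convergence together with the rescaling $|\nabla h|^2=\tfrac{3^{4/3}}{4}|\nabla\varphi|^2$ would give the lower semicontinuity bound
\[
\liminf_\ep\int_{K_R\sm\B_\eta}\chi_R|\nabla h_\ep'|^2\,dx'\ge\tfrac{3^{4/3}}{4}\int_{K_R\sm\cup_a B(a,\eta)}\chi_R|\nabla\varphi|^2\,dx.
\]
Adding the two contributions, cancelling the $\tfrac{1}{2\pi}|\ln\rho_\ep|\nu^\ep$ that is already subtracted off in $f_\ep$ via \eqref{deff}, using $|\tilde A_i^\ep|^2\to 3^{4/3}\pi^2$ for case (ii) droplets and $r(\B_0)\sim\rho_\ep$ from \eqref{rB00}, and finally letting $\eta\to 0$, would produce \eqref{Wlbound2}. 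The extra $\tfrac{3^{4/3}\pi}{8}\sum_a\chi_R(a)$ I expect to emerge from the next-to-leading term in the self-energy expansion of an ideal circular droplet of radius $\bar r_\ep$, which is captured by the $(A_i^\ep-\pi\bar r_\ep^2)^2$ term in $M_\ep$ and by the $\tfrac{\kappa^2}{2\lep}|h_\ep'|^2$ contribution to $\bar g_\ep$.

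\textbf{Main obstacle.} The hardest step will be the final constant-level bookkeeping: one must match $\tfrac{1}{2\pi}|\ln\rho_\ep|\nu^\ep$ against $\pi\ln(\eta/\rho_\ep)$ per droplet with error $o(1)$ per droplet, and show that the potentially numerous case (i) droplets contribute only $o(|K_R|)$ to the limiting energy density despite their unquantized areas, so that no stray mass leaks from the small droplets into the constant $\tfrac{3^{4/3}\pi}{8}$ coefficient.
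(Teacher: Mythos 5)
Your proposal follows the correct general architecture of the paper's proof: Part 1 via the $M_\ep$-terms in $\bar g_\ep$ (this matches the paper's Step 1), convergence via passing to the limit in \eqref{heqn1} (matches Step 2), and a ball construction paired with lower semicontinuity for Part 2. Two genuine gaps remain.

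First, your convergence argument never rules out several case (ii) droplets converging to the \emph{same} limit point $a\in\Lambda$; a priori one only gets $\mu_\ep'\to 3^{2/3}\pi\sum_a d_a\delta_a$ with multiplicities $d_a\ge 1$. Proving $d_a=1$ is the content of the paper's Step 3: a localized ball construction on $B(a,\eta)$ run first to radius $\eta^3$ (which collapses onto a single ball containing the $d_a$ droplets), then from $\eta^3$ to $\eta$, produces a lower bound whose leading part is proportional to $(2d_a^2-3d_a)|\ln\eta|\,\chi_R(a)$; this diverges as $\eta\to 0$ unless $d_a=1$, contradicting the $\eta$-independent bound coming from \eqref{gbound}. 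Without this step the charge $3^{2/3}\pi$ per Dirac mass and the equation for $\varphi$ are not established.

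Second, your attribution of the constant $\tfrac{3^{4/3}\pi}{8}\sum_a\chi_R(a)$ to the $(A_i^\ep-\pi\bar r_\ep^2)^2$ term in $M_\ep$ and to the $\tfrac{\kappa^2}{2\lep}|h_\ep'|^2$ contribution is incorrect; those terms are $o(1)$ per (case (ii)) droplet. The constant $\tfrac{3^{4/3}\pi}{8}$ is precisely the gradient energy stored \emph{inside} each droplet: for a uniformly charged disk of total charge $Q$ in the plane one has $\int_{\rm disk}|\nabla\phi|^2=Q^2/(8\pi)$, and with $Q=3^{2/3}\pi$ this is $3^{4/3}\pi/8$. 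The paper extracts this in Step 4 via the quantitative isoperimetric inequality \eqref{strongiso2} and the Bonnesen inequality \eqref{bonnes} (to certify that the droplet is a near-ball whose circumscribed radius is $r_i^\ep(1+O(\lep^{-1/2}))$), leading to the explicit estimates \eqref{lecerclelastlast}--\eqref{lecerclelastlast6}, and only then combines with the annulus estimate \eqref{lecercleannulus2} and the lower semicontinuity estimate \eqref{partieh}. Your sketch applies Proposition \ref{boulesren} over $\B_\eta$ and adds the outside-ball semicontinuity bound; since item 3 of Proposition \ref{boulesren} tracks only the logarithmic growth from $r(\B_0)$ to $\eta$ and discards the energy inside $\B_0$ (cf.\ Remark \ref{moinsb0}), the resulting inequality would read $\liminf_\ep\int\chi_R\bar g_\ep\ge\tfrac{3^{4/3}}{2}W(\varphi,\chi_R)+o(|K_R|)$ \emph{without} the $\tfrac{3^{4/3}\pi}{8}\sum_a\chi_R(a)$, which is too weak. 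To complete the argument you would need to add a separate per-droplet lower bound for $\int_{\B_0}\chi_R|\nabla h_\ep'|^2\,dx'$ converging to $\tfrac{3^{4/3}\pi}{8}\chi_R(a)$, exactly as in the paper's \eqref{lecerclelastlast5}--\eqref{lecerclelastlast6}.
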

\begin{remark}
  We point out that it is included in Part 1 of Theorem \ref{Wlbound}
  that at most one droplet $\Omega_{i_{\varepsilon},\varepsilon}'$
  with $A_{i_\ep,\ep}$ bounded from below converges to $a \in
  \Lambda$. Indeed otherwise in the first item we would have
  $\mu_{\eps}' \to 3^{2/3} \pi n_a \sum_{a \in \Lambda} \delta_a$
  where $n_a > 1$ is the number of non-vanishing droplets converging
  to the point $a$.
\end{remark}

Theorem \ref{Wlbound} relies crucially on the following proposition
which establishes bounds needed for compactness. Each of the bounds
relies on \eqref{gbound}. Throughout the rest of this section, all
constants are assumed to implicitly depend on $\kappa$, $\bar \delta$,
$\ell$ and $\sup_{\ep > 0} F^\ep [u^\ep]$. 


\begin{lem}\label{Rbounds}
  Let $\bar g_{\varepsilon}$ be as above, assume \eqref{gbound} holds
  and denote $\mathcal{C}_R= \limsup_{\ep \to 0} \bar g_\ep(\UR)$.
  Then for any $R$ and $\varepsilon$ small enough depending on $R$ we
  have
  \begin{equation} \label{Rbounds.11} \sum_{\alpha |_{U_{\alpha}
        \subset \UR} } n_{\alpha}^2 \leq C(\mathcal{C}_{R+C} + R^2),
\end{equation}
\begin{equation}\label{Rbounds.21}
  \sum_{i \in I_{\beta,K_R}} A_i^\eps  \leq
  C(\mathcal{C}_{R+C}+R^2), 
\end{equation}
\begin{align} \label{Rbounds.31}\left| \int_{K_R} \chi_R
    (f_{\varepsilon}-g_{\varepsilon}) dx \right|&\leq C \sum_{\alpha
    |_{U_{\alpha} \subset \URC \backslash \URc}} (n_{\alpha}+1) \ln
  (n_{\alpha}+2) \leq C(\mathcal{C}_{R+C} + R^2),
\end{align}
where $\{\chi_R\}$ is as in Definition \ref{Wvdef} and $n_{\alpha} =
\# I_{\beta,U_{\alpha}}$, with $U_\alpha$ as in the proof of
Proposition \ref{wspread}, for some $C>0$ independent of $\eps$ or
$R$.  Furthermore, for any $p \in (1,2)$ there exists a $C_p > 0$
depending on $p$ such that for any $R>0$ and $\varepsilon$ small
enough
\begin{equation} \label{Rbounds.41} \int_{\UR} |\nabla h_\ep'|^p dx
  \leq C_p(\mathcal{C}_{R+C}+R^2).
\end{equation}
\end{lem}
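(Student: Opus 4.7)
The plan is to view each of the four bounds as a consequence of three ingredients established earlier in the paper: the pointwise lower bound $g_\ep\ge -C$ from Corollary \ref{c-gepsC}, the uniform boundedness of $M_\ep$ from Proposition \ref{Mbound}, and the inequality $g_\ep\le\bar g_\ep$ (since $\bar g_\ep$ differs from $g_\ep$ only by nonnegative terms). Together with $|K_R|\le CR^2$ and the hypothesis \eqref{gbound}, these imply $g_\ep(K_{R+C})\le \mathcal{C}_{R+C}+CR^2$ for small $\ep$, so that the $\ln^2(M_\ep+2)$ factor appearing in Proposition \ref{wspread} is uniformly bounded, and the number of $\alpha$'s with $U_\alpha\subset K_R$ is $O(R^2)$.

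For \eqref{Rbounds.11}, I apply directly the second item of Proposition \ref{wspread}: $n_\alpha^2\le C(g_\ep(D_\alpha)+c\ln^2(M_\ep+2))$. Summing over $\alpha$ with $U_\alpha\subset K_R$, using the bounded overlap of $\{D_\alpha\}$ and the inclusion $D_\alpha\subset K_{R+C}$, produces the claim after absorbing the $R^2$ contribution from $\ln^2(M_\ep+2)$. For \eqref{Rbounds.21}, I split $I_{\beta,K_R}$ into large droplets $A_i^\ep>3^{2/3}\pi\gamma^{-1}$, for which $\sum A_i^\ep\le c_1^{-1}M_\ep=O(1)$ by the second sum in $M_\ep$, and medium droplets $\beta\le A_i^\ep\le 3^{2/3}\pi\gamma^{-1}$, each of area $O(1)$ and counted by $\sum_{U_\alpha\subset K_{R+C}}n_\alpha$; Cauchy--Schwarz together with \eqref{Rbounds.11} and Young's inequality close this estimate. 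For \eqref{Rbounds.31}, I apply the third item of Proposition \ref{wspread} with $\chi=\chi_R$: since $\|\nabla\chi_R\|_\infty\le C$ and $\nabla\chi_R$ is supported in $K_R\setminus K_{R-1}$, only $O(R)$ indices $\alpha$ with $U_\alpha\subset K_{R+C}\setminus K_{R-C}$ contribute. The bounds $|\tilde A_i^\ep|^2\le C(A_i^\ep+1)$ for $i\in I_\beta$ and \eqref{volbound} give $\nu^\ep(U_\alpha)\le C(n_\alpha+M_\ep)\le C(n_\alpha+1)$, and since $M_\ep$ is bounded also $(n_\alpha+M_\ep)\ln(n_\alpha+M_\ep+2)\le C(n_\alpha+1)\ln(n_\alpha+2)$, which yields the first inequality; the second follows from $(n+1)\ln(n+2)\le C(n^2+1)$ combined with \eqref{Rbounds.11}.

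The main new technical point is \eqref{Rbounds.41}, where $\nabla h_\ep'$ can blow up inside the balls of $\B_0$ used to cover the droplets with $A_i^\ep\ge\beta$. I would decompose $K_R$ into $\B_0\cap K_R$ and its complement. Outside $\B_0$ the density $\nu^\ep$ vanishes (it is supported on the $\Omega_{i,\ep}'$ with $i\in I_\beta$, all of which lie in $\B_0$), so $f_\ep\ge |\nabla h_\ep'|^2$ there. Combining the lower bound in the second item of Proposition \ref{boulesren} inside each $B\in\B_0$, which absorbs the negative $|\ln\rho_\eps|\nu^\ep$ contribution up to an $O(\nu^\ep(B))$ error, with \eqref{Rbounds.31} and the floor $g_\ep\ge -C$, I deduce $\int_{K_R\setminus\B_0}|\nabla h_\ep'|^2\le C(\mathcal{C}_{R+C}+R^2)$. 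H\"older's inequality in the form $R^{2-p}X^{p/2}\le CR^2+CX$ then controls the exterior contribution. For the part inside $\B_0$, I use the equation \eqref{heqn1}: after localizing via a smooth cutoff supported in $K_{R+1}$, the right-hand side becomes a bounded Radon measure whose total variation is dominated by \eqref{Rbounds.21} together with the small-droplet contribution coming from $M_\ep$, and a standard measure-data elliptic regularity estimate in two dimensions (convolution with $\nabla G$, whose $L^p_{loc}$ norm is finite precisely for $p<2$) produces the required bound. This last step is where $p<2$ is used crucially and will require the most care in the proof.
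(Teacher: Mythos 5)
The first three estimates \eqref{Rbounds.11}--\eqref{Rbounds.31} are handled along the same lines as the paper's (terse) proof: the uniform bound on $M_\ep$ from Proposition \ref{Mbound} makes the droplet volumes uniformly bounded and makes the $\ln^2(M_\ep+2)$ factor harmless, so the second and third items of Proposition \ref{wspread} together with the bounded overlap of the $\{D_\alpha\}$, the $O(R^2)$ count of indices, Cauchy--Schwarz/Young for \eqref{Rbounds.21}, and the elementary inequality $(n+1)\ln(n+2)\lesssim n^2+1$ for \eqref{Rbounds.31} close the argument. These match the paper.

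For \eqref{Rbounds.41}, the paper defers to \cite{compagnon}, Lemma 4.6 and \cite{SS2}, Lemma 4.6, where the $L^p$ bound is obtained by applying the ball construction on dyadic annuli around the droplets and H\"oldering on each annulus. Your proposed route is genuinely different and has two real gaps. First, the claimed estimate $\int_{K_R\setminus\B_0}|\nabla h_\ep'|^2\le C(\mathcal{C}_{R+C}+R^2)$ is false: the annular region $\B_\rho\setminus\B_0$ (with $\rho$ the fixed final radius of the localized ball construction in Proposition \ref{wspread}) lies in $K_R\setminus\B_0$ and carries essentially all of the logarithmically divergent droplet self-energy, i.e. $\int_{\B_\rho\setminus\B_0}|\nabla h_\ep'|^2\sim\frac{1}{2\pi}|\ln\rho_\ep|\,\nu^\ep(K_R)\to\infty$. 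Relatedly, the lower bound in the second item of Proposition \ref{boulesren} is $\frac{1}{2\pi}\bigl(\ln\tfrac{r}{r(\B_0)}-cr\bigr)^+\sum|\tilde A_i^\ep|^2$, which vanishes at $r=r(\B_0)$, i.e.\ on $\B_0$ itself; it absorbs $|\ln\rho_\ep|\nu^\ep$ only once one grows the balls out to a fixed radius. What is actually true is $\int_{K_R\setminus\B_\rho}|\nabla h_\ep'|^2\le C(\mathcal{C}_{R+C}+R^2)$ (because $g_\ep$ dominates $\tfrac12|\nabla h_\ep'|^2\indic_{\TTT\setminus\B_\rho}$ up to a constant by construction), and the heart of the lemma is precisely the intermediate region $\B_\rho\setminus\B_0$, where $p<2$ makes $\int r^{1-p}\,dr$ converge while $\int r^{-1}\,dr$ does not; skipping this region is skipping the whole difficulty. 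Second, the measure-data elliptic estimate via Young's inequality cannot give the stated bound: $\|\nabla G\chi_{B_{CR}}\|_{L^p}^p\sim R^{2-p}$, so $\|\nabla G*\mu\|_{L^p(K_R)}^p\lesssim R^{2-p}\|\mu\|_M^p$, which with $\|\mu\|_M\sim\mathcal{C}_{R+C}+R^2$ overshoots $O(\mathcal{C}_{R+C}+R^2)$ by a power of $R$; the sharp bound requires exploiting the separation of the droplet centers or the approximate neutrality of $\mu_\ep'-\bar\mu^\ep$, which a generic convolution estimate does not see, and in addition the localization introduces cutoff terms $-2\nabla\zeta\cdot\nabla h_\ep'-h_\ep'\Delta\zeta$ whose total variation is not controlled by \eqref{Rbounds.21}, making the argument circular as stated. (For the genuine $\B_0$ part the elliptic estimate is in any case unnecessary: $|\B_0\cap K_R|\lesssim\ep^{2/3}|\ln\ep|^{1/3}(\mathcal{C}_{R+C}+R^2)^2\to 0$, so a H\"older bound against the $L^2$ energy inside $\B_\rho$ makes that contribution $o_\ep(1)$.)
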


\begin{proof}
  First observe that the rescaled droplet volumes and perimeters
  $A_i^\ep$ and $P_i^\ep$ are bounded independently of $\ep$, as
  follows from Proposition \ref{Mbound} and the definition of
  $M_\ep$. Then, \eqref{Rbounds.11} and \eqref{Rbounds.21} are a
  consequence of \eqref{gbound}, the second item in Proposition
  \ref{wspread} together with the upper bound on $M_\ep$.  The first
  inequality appearing in \eqref{Rbounds.31} follows from item 3 of
  Proposition \ref{wspread} with the bound on $M_\ep$, where we took
  into consideration that only those $D_\alpha$ that are in the $O(1)$
  neighborhood of the support of $|\nabla \chi_R|$ contribute to the
  sum, along with the observation that the mass of $\nu^\ep$ (of
  \eqref{defnue}) is now controlled by $n_\alpha$ (a consequence of
  the above fact that all droplet volumes are uniformly bounded).  The
  second inequality in \eqref{Rbounds.31} follows from
  \eqref{Rbounds.11}.  The bound \eqref{Rbounds.41} is a consequence
  of Proposition \ref{boulesren} and follows as in \cite{compagnon}
  and \cite{SS2}. We refer the reader to \cite{compagnon}, Lemma 4.6
  or \cite{SS2} Lemma 4.6 for the proof in a slightly simpler setting.
\end{proof}

\subsection{Lower bound by the renormalized energy (Proof of Theorem  
  \ref{Wlbound}) }\label{LboundW}
We start by proving the first assertions of the theorem.\\

\noindent {\it - Step 1: All limit droplets have optimal sizes.}
>From \eqref{barg}, \eqref{gbound} and Corollary \ref{c-gepsC}, for all
$\eps$ sufficiently small depending on $R$ we have
\begin{multline}\label{Mbound5} 
  \int_{\UR} \Bigg( \sum_i \left(P_i - \sqrt{4 \pi |A_i^{\ep}|}
  \right) \tilde \delta_i^\eps + c_1 \sum_{A_i^{\ep} > 3^{2/3} \pi
    \gamma^{-1}}
  A_i^{\ep}\tilde \delta_i^\eps  \\
  + c_2 \sum_{\beta \le A_i^{\ep} \le \pi 3^{2/3} \gamma^{-1}}
  (A_i^{\ep} - \pi \bar r_{\eps}^2)^2\tilde \delta_i^\eps + c_3
  \sum_{A_i^{\ep} <\beta} A_i^{\ep}\tilde \delta_i^\eps \Bigg) dx \le
  \frac{C_{R}}{\lep},
\end{multline} 
where we recall that all the terms in the sums are nonnegative. It
then easily follows that for all $i \in I_{K_R}$ the droplets with
$A_i^\ep> 3^{2/3} \pi \gamma^{-1}$ do not exist when $\ep$ is small
enough depending on $R$, and those with $A_i^\ep<\beta$ satisfy
$A_i^\ep =C_R \lep^{-1}$ and $P_i^\ep \le C_R \lep^{-1/2}$, for some
$C_R > 0$ independent of $\eps$.  This establishes item (i) of Part 1
of the theorem.

It remains to treat the case of $A_i^\ep \in [ \beta, 3^{2/3} \pi
\gamma^{-1} ]$ when $\ep $ is small enough.  It follows from
\eqref{Mbound5} that
\begin{align}
  \label{DOmiest}
  D(\Omega_{i,\eps}') \leq {C_R  \over |\ln \eps|},
\end{align}
for some $C_R > 0$ independent of $\eps$, and since $\bar{r}_\ep =
3^{1/3}+o_\ep(1)$, for all these droplets (or equivalently for all
droplets with $A_i^\ep \geq \beta$) we must have
\begin{equation}\label{Aiconv} 
  A_i^{\ep} \to 3^{2/3} \pi \quad \text{and} \quad P_i^\ep \to 2 \cdot
  3^{1/3} \pi \ \text{ 
    as} \  \ep \to 0.
\end{equation} 
Using \eqref{strongiso2}, \eqref{frankela} easily follows from
\eqref{Aiconv} and \eqref{Mbound5}.

\noindent {\it - Step 2: Convergence results.}\\
>From boundedness of $A_i^\eps$, \eqref{Rbounds.21} and \eqref{Mbound5}
we know that $\# I_{\beta,\UR}$ and $\mu'_\eps(K_R)$ are both bounded
independently of $\varepsilon$ as $\eps \to 0$.  We easily deduce from
this, the previous step and the definition of $\mu_\ep'$ that up to
extraction, $\mu_\ep' $ converges in each $K_R$ to at most finitely
many point masses which are integer multiples of $3^{2/3} \pi$ and,
hence, to a measure of the form $\nu=3^{2/3} \pi \sum_{a \in \Lambda }
d_a \delta_a$, where $d_a\in \mathbb{N}$ and $\Lambda $ is a discrete
set in the whole of $\mathbb R^2$.  In view of \eqref{Rbounds.41}, we
also have $h_\eps' \rightharpoonup h \in \dot W^{1,p}_{loc}(\mathbb
R^2)$ as $\ep \to 0$, up to extraction (recall that we work with
equivalence classes from \eqref{vequiv}).  Finally, from the
definition of $\bar{g}_\ep$ in \eqref{barg} and the bound
\eqref{gbound} we deduce that
$$\frac{\kappa^2}{\lep}\int_{K_R} |h_\ep'|^2 \le C_R$$
from which it follows that $\lep^{-1} h_\ep' $ tends to $0$ in
$L^2_{loc}(\mr^2)$ as $\ep \to 0$.  Passing to the limit in the sense
of distributions in \eqref{heqn1}, we then deduce from the above
convergences that we must have 
\begin{equation}\label{dehnu}
  - \Delta h = 3^{2/3} \pi \sum_{a \in \Lambda }
    d_a \delta_a - \bar \mu \quad \text{ on } \mr^2. 
\end{equation}
We will show below that $d_a=1$ for every $a \in \Lambda$, and when
this is done, this will complete the proof of the first item after
recalling $\varphi^{\eps} = 2 \cdot 3^{-2/3} \h$ and $m = 2 \cdot
3^{-2/3} \bar \mu$.  \medskip

\noindent {\it - Step 3: There is only one droplet converging to any
  limit point $a$.}\\
\noindent In order to prove this statement, we examine lower bounds
for the energy.  Fix $R > 1$ such that $\partial K_R \cap \Lambda =
\varnothing$ and consider $a \in \Lambda \cap K_R$. From Step 1,
\eqref{AP} and Lemma \ref{l-diamP}, for any $\eta \in (0, \frac12)$
such that $\eta < \frac12 \min_{b \in \Lambda \cap K_R \backslash
  \{a\}} |a-b|$ and for all $r<\eta$, all the droplets converging to
$a$ are covered by $B(a, r)$, and $B(a, \eta)$ contains no other
droplets with $A_i^\eps \geq \beta$, for $\ep $ small enough. There
are $d_a \ \geq 1$ droplets in $B(a,r)$ such that $A_i^{\ep} \to
3^{2/3} \pi$ as $\varepsilon \to 0$, let us relabel them as
$\Omega_{1,\eps}', \dots, \Omega_{d_a,\eps}'$.

Let $U= B(a, \eta)$. Arguing as in the proof of the first item of
Proposition \ref{boulesren}, by \eqref{Aiconv}, we may construct a
collection $\B_0$ of disjoint closed balls covering $\bigcup_{i \in
  I_{\beta, U} } \Omega_{i,\eps}'$ and satisfying
\begin{equation}\label{rb0}
  r(\B_0)\le C d_a\ro_\ep  < \eta,
\end{equation}
for some universal $C > 0$, provided $\eps$ is small enough, and a
collection of disjoint balls $\mathcal{B}_r$ covering $\B_0$ of total
radius $r \in [r(\B_0), \eta]$. Choosing $r = \eta^3$, which is always
possible for small enough $\eps$, it is clear that
$\mathcal{B}_{\eta^3}$ consists of only a single ball contained in
$B(a,\frac{3}{2}\eta^3)$ for $\varepsilon$ small enough. Applying the
second item of Proposition \ref{boulesren} to that ball, we then
obtain 
\begin{align} 
  \label{dabound1} \int_{\mathcal{B}_{\eta^3} } \left( |\nab h_\ep'|^2
    + \frac{\kappa^2}{4 \lep} |h_\ep'|^2 \right) dx' \geq \frac{1}{2
    \pi} \(\ln\frac{\eta^3}{r(\mathcal{B}_0)} - c\eta^3 \)
  \sum_{i=1}^{d_a} |\tilde A_i^{\ep}|^2.
\end{align}
Therefore, we have
\begin{align} 
  \label{dabound11} 
  \int_{\mathcal{B}_{\eta^3} } \chi_R \left( |\nab h_\ep'|^2 +
    \frac{\kappa^2}{4 \lep} |h_\ep'|^2 \right) dx' \geq \frac{1}{2
    \pi} \(\ln\frac{\eta^3}{r(\mathcal{B}_0)} - c\eta^3 \)
  \(\min_{B(a,\eta)} \chi_R\) \sum_{i=1}^{d_a} |\tilde A_i^{\ep}|^2.
\end{align}
 On the other hand, we can estimate the contribution of the remaining
part of $B(a, \eta)$ as
  \begin{multline}
    \int_{B(a, \eta) \sm \mathcal{B}_{\eta^3}} \chi_R|\nab h_\ep'|^2
    \,dx'+ \frac{\kappa^2}{4 \lep}\int_{B(a, \eta)} \chi_R |h_\ep'|^2
    \, dx'\\ \geq \(\min_{B(a,\eta)} \chi_R\) \left( \int_{B(a, \eta)
        \sm B(a, 2 \eta^3)} |\nab h_\ep'|^2 \,dx' + \frac{\kappa^2}{4
        \lep}\int_{B(a, \eta)} |h_\ep'|^2 \, dx'
    \right) \\
    \geq \(\min_{B(a,\eta)} \chi_R\) \int_{2 \eta^3}^\eta \left(
      \int_{\partial B(a, r_B)} |\nab h_\ep'|^2 \, d \mathcal H^1(x')
      + \frac{\kappa^2}{4 \lep}\int_{B(a, r_ B)} |h_\ep'|^2 \, dx'
    \right) dr_B.
    \label{minchiR}
  \end{multline}
 Arguing as in \eqref{lecercle4filter} and using the fact that
  $\eta < \frac12$, we obtain
\begin{multline} 
  \int_{ B(a,\eta) \backslash \mathcal{B}_{\eta^3} }\chi_R |\nab
  h_\ep'|^2 dx' + \frac{\kappa^2}{4 \lep} \int_{B(a, \eta)} \chi_R
  |h_\ep'|^2 dx' \\ \geq \frac{1}{2 \pi} \(\min_{B(a,\eta)} \chi_R\)
  \ln \frac{1}{2\eta^{2}} \(\sum_{i=1}^{d_a}
  A_i^{\ep}\)^2(1-C\eta), \label{dabound2}
\end{multline}
where $C > 0$ is independent of $\eta$ and $\varepsilon$, for small
enough $\eps$.

We will now use crucially the fact shown in Step 1 that all $A_i^{\ep}
\geq \beta$ approach the same limit as $\eps \to 0$. We begin by
adding \eqref{dabound1} and \eqref{dabound2} and subtracting
$\frac{1}{2 \pi} | \ln \rho_{\eps} | \sum_{i=1}^{d_a} |\tilde
A_i^{\ep}|^2 \chi_R^i$ from both sides. With the help of \eqref{rb0}
we can cancel out the leading order $O(|\ln \rho_\eps|)$ term in the
right-hand side of the obtained inequality. Replacing $\tilde A_i^\ep$
and $A_i^\ep$ with $3^{2/3} \pi +o_\ep(1)$ in the remaining terms and
using the fact that $\min_{B(a,\eta)}\chi_R \geq \chi_R(a) - 2
\eta \|\nabla \chi_R\|_{\infty}$ on $B(a,\eta)$, we then find
\begin{multline} 
  \label{dabound3} 
  \int_{B(a,\eta) } \chi_R \left( |\nab h_\ep'|^2 + \frac{\kappa^2}{2
      \lep} |h_\ep'|^2 - \frac{1}{2 \pi} |\ln \rho_{\eps}|
    \nu^{\varepsilon} \right) \, dx' \\ \geq {3^{4/3} \pi \over 2}
  \chi_R(a) \( d_a^2\ln \frac{1}{2\eta^2} + d_a\ln \frac{\eta^3}{2} \)
  - C,
\end{multline}
where $C > 0$ is independent of $\eps$ or $\eta$.

Now, adding up the contributions of all $a \in \Lambda \cap K_R$ and
recalling the definition of $f_\eps$ in \eqref{deff}, we conclude that
on the considered sequence
\begin{multline}
  \label{fepsadded}
  \limsup_{\eps \to 0} \int_{K_R} \chi_R f_\eps \, dx' \geq
  \limsup_{\eps \to 0} \sum_{a \in \Lambda \cap K_R} \int_{B(a, \eta)}
  \chi_R f_\eps \, dx' \\
  \geq {3^{4/3} \pi \over 2} |\ln \eta| \sum_{a \in \Lambda \cap K_R}
  (2 d_a^2 - 3 d_a) \chi_R(a) - C,
\end{multline}
for some $C > 0$ is independent of $\eps$ or $\eta$. In particular,
since $\chi_R(a) > 0$ for all $a \in \Lambda \cap K_R$, the right-hand
side of \eqref{fepsadded} goes to plus infinity as $\eta \to 0$,
unless all $d_a = 1$. But by the estimate \eqref{Rbounds.31} of
Proposition \ref{Rbounds}, Corollary \ref{c-gepsC} and our assumption
in \eqref{gbound} together with \eqref{barg}, the left-hand side of
\eqref{fepsadded} is bounded independently of $\eta$, which yields the
conclusion.

\medskip
\noindent {\it - Step 4: Energy of each droplet}.  Now that we know
that for each $a_i \in \Lambda \cap K_R$ there exists exactly one
droplet $\Omega_{i,\eps}'$ such that $a_i^\eps \to a_i$ and $A_i^\eps
\to 3^{2/3} \pi$, we can extract more precisely the part of
energy that concentrates in a small ball around each such
droplet.  Let $B_i$ be a ball that minimizes Fraenkel asymmetry
defined in \eqref{fraen}, i.e., let $B_i = B(a_i^\eps, r_i^\eps)$, and
let $B$ be a ball of radius $r_B$ centered at $a_i^\eps$. Arguing as
in \eqref{lecercle4filter} in the proof of the second item of
Proposition \ref{boulesren}, we can write
\begin{multline}
  \label{lecerclelastlast}
  \int_{\partial B} |\nabla \h|^2 d \mathcal H^1(x') + {\kappa^2 \over
    4|\ln \eps|} \int_B |\h|^2 dx' \geq {\eps^{-4/3} |\ln \eps|^{-2/3}
    | \Omega_{i,\eps}' \cap B|^2 \over 2 \pi r_B} \left( 1 - c
      r_i^\eps \right).
\end{multline}
Observe that by the definition of Fraenkel asymmetry 
we have $|\Omega_{i,\eps}' \cap B| \geq |B| - \frac12
\alpha(\Omega_{i,\eps}') |B_i|$ for all $r_B < r_i^\eps$. Hence,
denoting by $\tilde r_i^\eps$ the smallest value of $r_B$ for which
the right-hand side of this inequality is non-negative and integrating
from $\tilde r_i^\eps$ to $r_i^\eps$, we find
\begin{multline}
  \label{lecerclelastlast4}
  \int_{B_i} \left( |\nabla \h|^2 + {\kappa^2 \over 4 |\ln \eps|}
    |\h|^2 \right) dx' \\
  \geq {\pi \over 2} (1 + o_\eps(1)) \eps^{-4/3} |\ln \eps|^{-2/3}
  \int_{\tilde r_i^\eps}^{r_i^\eps} r_B^{-1} (r_B^2 - |\tilde
  r_i^\eps|^2)^2 d r_B.
\end{multline}
Since by \eqref{frankela} and \eqref{Aiconv} we have $\tilde r_i^\eps
/ r_i^\eps \to 0$ and $\eps^{-1/3} |\ln \eps|^{-1/6} r_i^\eps \to
3^{1/3}$ as $\eps \to 0$, after an elementary computation we find
\begin{align}
  \label{lecerclelastlast5}
  \int_{\Omega_{i,\eps}'} \left( |\nabla \h|^2 + {\kappa^2 \over 4
      |\ln \eps|} |\h|^2 \right) dx' \geq {3^{4/3} \pi \over 8} +
  o_\eps(1).
\end{align}

On the other hand, by \eqref{bonnes} and \eqref{DOmiest} it is
  possible to choose a collection $\B_0 \subset B(a_i, \eta)$,
  actually consisting of only a single ball $B(\tilde a_i^\eps,
  R_i^\eps)$ circumscribing $\Omega_{i,\eps}'$, so that
\begin{align}
  \label{rBobonn}
  r(\B_0) = R_i^\eps \leq r_i^\eps \left( 1 + C_R |\ln \eps|^{-1/2}
  \right) = \rho_\eps + o_\eps(\rho_\eps).
\end{align}
The corresponding ball construction $\B_r$ of the first item of
Proposition \ref{boulesren}, with $U = B(a_i, \eta)$ and $\eta$ as in
Step 3 of the proof (again, just a single ball $B(\tilde a_i^\eps,
r)$), exists and is contained in $U$ for all $r \in [r(\B_0), \eta']$,
for any $\eta' \in (r(\B_0), \eta)$, provided $\eps$ is sufficiently
small depending on $\eta'$.  In view of the fact that for small enough
$\eta'$ and small enough $\eps$ depending on $\eta'$ we have $\chi_R(x)
\geq \chi_R(\tilde a_i^\eps) - c |x - \tilde a_i^\eps| > 0$, with $c >
0$ independent of $\eps$, $\eta'$ or $R$, we obtain that
\begin{multline}
  \label{lecercleannulus}
  \int_{B(\tilde a_i^\eps, \eta') \sm \B_0} \chi_R |\nabla \h|^2 dx' +
  {\kappa^2 \over 4 |\ln \eps|} \int_{B(\tilde a_i^\eps, \eta')}
  \chi_R
  |\h|^2 dx' \\
  \geq \int_{r(\B_0)}^{\eta'} (\chi_R(\tilde a_i^\eps) - c r) \left(
    \int_{\partial \B_r} |\nabla \h|^2 d \mathcal H^1(x) \right) dr +
  {\kappa^2 \chi_R(\tilde a_i^\eps) \over 8 |\ln \eps|} \int_{B(\tilde
    a_i^\eps,
    \eta')} |\h|^2 dx' \\
  \geq \int_{r(\B_0)}^{\eta'} (\chi_R(\tilde a_i^\eps) - c r) \left(
    \int_{\partial \B_r} |\nabla \h|^2 d \mathcal H^1(x) + {\kappa^2
      \over 8 \eta' |\ln \eps|} \int_{B(\tilde
      a_i^\eps, \eta')} |\h|^2 dx' \right) dr \\
  \geq \int_{r(\B_0)}^{\eta'} (\chi_R(\tilde a_i^\eps) - c r) \left(
    \int_{\partial \B_r} |\nabla \h|^2 d \mathcal H^1(x) + {\kappa^2
      \over 4 |\ln \eps|} \int_{\B_r} |\h|^2 dx' \right) dr  \\
  \geq {1 \over 2 \pi} |\tilde A_i^\eps|^2 \int_{r(\B_0)}^{\eta'}
  (\chi_R(\tilde a_i^\eps) - c r) (1 - C r) {dr \over r},
\end{multline}
for $\eta'$ and $\eps$ sufficiently small, arguing as in
\eqref{lecercle4filter} in the proof of Proposition \ref{boulesren}
and taking into account Remark \ref{moinsb0} in deducing the last
line. Performing integration in \eqref{lecercleannulus} and using
\eqref{rBobonn}, we then conclude
\begin{multline}
  \label{lecercleannulus2}
  \int_{B(\tilde a_i^\eps, \eta') \sm \B_0} \chi_R |\nabla \h|^2 dx' +
  {\kappa^2 \over 4 |\ln \eps|} \int_{B(\tilde a_i^\eps, \eta')}
  \chi_R
  |\h|^2 dx' \\
  \geq {1 \over 2 \pi} |\tilde A_i^\eps|^2 \chi_R(\tilde a_i^\eps) \ln
  \left( {\eta' \over \rho_\eps} \right) - C \eta',
\end{multline}
for $\eps$ sufficiently small.

\noindent
- {\it Step 5: Convergence}.  Using the fact, seen in Step 2,
that $\h \rightharpoonup h$ in $\dot W^{1,p}_{loc}(\mathbb R^2)$, we
have, by lower semi-continuity,
\begin{equation}\label{partieh}
  \liminf_{\eps \to 0}
  \int_{\mr^2 \backslash \cup_{a \in \Lambda } B(a, \eta)} \chi_R
   |\nab h_\ep'|^2\, dx'
  \ge \int_{\mr^2 \backslash \cup_{a \in \Lambda }B(a,
    \eta)}\chi_R |\nab h|^2  dx'.
\end{equation} 
On the other hand, in view of $\chi_R(\tilde a_i^\eps) = \chi_R^i +
O(\rho_\eps)$ by \eqref{rBobonn}, from \eqref{lecercleannulus2} we
obtain
\begin{multline}
  \label{lecerclelastlast55}
  \liminf_{\eps \to 0} \int_{B(a_i^\eps, \eta) \sm \B_0} \chi_R
  |\nabla \h|^2 dx' + \int_{B(a_i^\eps, \eta)} \chi_R \left( {\kappa^2
      \over 4 |\ln \eps|}
    |\h|^2 - \frac{1}{2 \pi} |\ln \rho_\eps| \nu^\eps \right) dx' \\
  \geq \liminf_{\eps \to 0} \int_{B(\tilde a_i^\eps, \eta') \sm \B_0}
  \chi_R |\nabla \h|^2 dx' + \int_{B(\tilde a_i^\eps, \eta')} \chi_R
  \left( {\kappa^2 \over 4 |\ln \eps|} |\h|^2 - \frac{1}{2 \pi} |\ln
    \rho_\eps| \nu^\eps \right)
  dx' \\
  \geq {3^{4/3} \pi \over 2} \chi_R(a_i) \ln \eta' - C \eta',
\end{multline}
where we also used that $\chi_R^i \to \chi_R(a)$ as $\eps \to 0$.

We now convert the estimate in \eqref{lecerclelastlast5} to one over
$\B_0$ and involving $\chi_R$ as well. Observing that
$\Omega_{i,\eps}' \subseteq \B_0$ and that $\chi_R(x') \geq \chi_i^R -
4 \rho_\eps \| \nabla \chi_R \|_\infty$ for all $x' \in
\Omega_{i,\eps}'$ and $\eps$ small enough by \eqref{rBobonn}, from
\eqref{lecerclelastlast5} and \eqref{newRdef1} we obtain
\begin{align}
  \label{lecerclelastlast6}
  \liminf_{\eps \to 0} \int_{\B_0} \chi_R \left( |\nabla \h|^2 +
    {\kappa^2 \over 4 |\ln \eps|} |\h|^2 \right) dx' \geq {3^{4/3} \pi
    \over 8} \chi_R(a_i),
\end{align}
where we used the fact that by \eqref{P1.N34}, \eqref{minoF} and
\eqref{Pibound} the integral in the left-hand side of
\eqref{lecerclelastlast5} may be bounded by $C |\ln \eps|$, for some
$C > 0$ independent of $\eps$ and $R$.  Adding up \eqref{partieh}
  with \eqref{lecerclelastlast55} and \eqref{lecerclelastlast6} summed
  over all $a_i \in K_R$, in view of  the arbitrariness of $\eta' < \eta$
  we then obtain 
\begin{multline} 
  \label{dabound32} 
  {\liminf_{\eps \to 0}} \int_{\mathbb{R}^2}\chi_R\( |\nab
  h_\ep'|^2 + \frac{\kappa^2 }{2\lep} |h_\ep'|^2 -
  \frac{1}{2 \pi} |\ln
  \rho_{\eps}| \nu^{\varepsilon}\) dx' \\
  \geq \int_{\mr^2 \backslash \cup_{a \in \Lambda }B(a, \eta)}\chi_R
  |\nab h|^2 dx' + \frac{3^{4/3} \pi}{2} \sum_{a \in \Lambda}
  \chi_R(a) \( \ln \eta + \frac{1}{4}\) - C \eta.
\end{multline}
Letting now $\eta \to 0$ in \eqref{dabound32}, and recalling that
$\varphi= 2 \cdot 3^{-2/3}h$ and that the definition of
$W(\varphi,\chi)$ is given by Definition \ref{WR}, we obtain
\begin{multline}\label{prodfin}
  \liminf_{\varepsilon \to 0} \int_{\mathbb R^2} \chi_R \(|\nab
  h_\ep'|^2 + \frac{\kappa^2}{2 \lep} |h_\ep'|^2 -
  \frac{1}{2\pi}|\ln \ro_{\eps} |\nu^{\varepsilon}\) dx'
  \\
  \geq \frac{3^{4/3}}{2} W(\varphi,\chi_R) + \frac{3^{4/3}
    \pi}{8}\sum_{a \in \Lambda} \chi_R(a).
\end{multline}

>From \eqref{Rbounds.31}  we may replace $f_\eps =
|\nab h_\ep'|^2 + \frac{\kappa^2}{2 \lep} |h_\ep'|^2 -
\frac{1}{2\pi} |\ln \rho_{\eps}| \nu^{\varepsilon}$ by
$g_{\varepsilon}$ in \eqref{prodfin} with an  additional error term:  
\begin{align}\label{prodfin2}
  \liminf_{\varepsilon \to 0} \int_{\mathbb R^2} \chi_R
  g_{\varepsilon} dx' \geq \frac{3^{4/3}}{2} W(\varphi,
  \chi_R) + \frac{3^{4/3} \pi}{8} \sum_{a \in \Lambda} \chi_R(a)
  - c \Delta(R),
\end{align}
where 
\[
\Delta(R) = \limsup_{\varepsilon \to 0} \sum_{\alpha|_{K_{R-C} \subset
    U_{\alpha} \subset \URC}} (n_{\alpha}+1) \ln (n_{\alpha}+2),
\]
for some $c, C > 0$ independent of $R$.  Under hypothesis
(\ref{gbound2}), from \eqref{Rbounds.11} we have
\[\limsup_{\varepsilon \to 0} \sum_{\alpha|_{U_{\alpha} \subset \UR}}
n_{\alpha}^2 \leq CR^2,\] and thus, using H\"older inequality and
bounding the number of $\alpha$'s involved in the sum by $CR$ we find
\begin{multline*}
 {\Delta(R)} \leq C \ {\limsup_{\eps \to 0}}
\sum_{\alpha|_{U_{\alpha} \subset \URC \backslash \URc}}
(n_{\alpha}^{3/2} +1) \\ \leq {C'} R^{1/4} \ {\limsup_{\eps \to 0}}
\left( \sum_{\alpha|_{{U}_{\alpha} \subset \URC}}
  n_{\alpha}^2\right)^{3/4} + CR \leq C'' R^{7/4},
\end{multline*}
for some $C, C', C'' > 0$ independent of $R$. Hence 
\[
\limsup_{R \to \infty} \limsup_{\varepsilon \to 0}
\frac{\Delta(R)}{R^2} = 0,
 \]
 which together with \eqref{prodfin2} and the fact that $\bar g_\eps
 \geq g_\eps$ establishes (\ref{Wlbound2}). \qed

 \subsection{Local to Global bounds via the Ergodic Theorem: proof of
   Theorem \ref{main}, item i.}

 The proof follows the procedure outlined in \cite{SS2}.  We refer the
 reader to Sections 4 and 6 of \cite{SS2} for the proof adapted to the
 case of the magnetic Ginzburg-Landau energy, which is essentially
 identical to the present one, with some simplifications due to the
 fact that we work on the torus. As in \cite{SS2}, we say that $\mu
 \in \mathcal{M}_0(\mathbb R^2)$, if the measure $d\mu + Cdx$ is a
 positive locally bounded measure on $\mathbb R^2$, where $C$ is the
 constant appearing in Corollary \ref{c-gepsC}. The measures $d \bar
 g_\ep$ and the functions $\vp_\ep$ will be alternatively seen as
 functions on $\TTT$ or as periodically extended to the whole of
 $\mr^2$, which will be clear from the context. We let $\chi$ be a
 smooth non-negative function on $\mathbb R^2$ with support in
 $B(0,1)$ and with $\int_{\mathbb R^2} \chi(x) dx =1$.  We set $X=\dot
 W_{loc}^{1,p}(\mathbb{R}^2) \times \mathcal{M}_0 (\mathbb R^2)$, and
 define for every $\mathbf x = (\varphi, g) \in X$ the following
 functional
\begin{equation} 
  \label{P1.N35}
  \mathbf{f}(\mathbf{x}) :=  2 \int_{\mathbb R^2} \chi(y) dg(y). 
\end{equation} 
We note that from \eqref{Fg} we have for $\eps$ sufficiently small
\begin{equation}\label{Ergodic1}
  F^\ep[u^\ep] + o_\eps(1) \ge \frac{2}{\ell^2\lep}
  \int_{\TTT}d\bar{g}_\ep = 
  \dashint_{\TTT}  \mathbf f(\theta_\lambda \mathbf x_\eps) d\lambda,  
\end{equation}
where $\mathbf x_\eps := ( \varphi^{\eps}, \bar g_{\varepsilon})$,
  $\theta_\lambda$ denotes the translation operator by $\lambda \in
  \mathbb R^2$, i.e., $\theta_\lambda f(x) := f(x + \lambda)$, and
  $\dashint$ stands for the average. Here the last equality follows
by an application of Fubini's theorem and the fact that $\int_{\mathbb
  R^2} \chi (x) dx = 1$. 


It can be easily shown as in \cite{SS2} that $\mathbf f_\eps =
  \mathbf f$ satisfies the coercivity and $\Gamma$-liminf properties
required for the application of Theorem 3 in \cite{SS2} on
  sequences consisting of $\mathbf x_\eps = (\varphi^\eps, \bar
  g_\eps)$ obtained from $(u^\eps)$ obeying \eqref{ZO.1}. This is
done by starting with a sequence
$\{\mathbf{x}_{\varepsilon}\}_{\varepsilon}$ in $X$ such that
\begin{equation}\label{ErgodicBound} 
  \limsup_{\varepsilon \to 0} \int_{K_{R}}
  \mathbf{f}(\theta_{\lambda}\mathbf{x}_{\varepsilon}) 
  d\lambda < +\infty,
\end{equation} 
for every $R>0$, which implies that the integral is finite whenever
$\varepsilon$ is small enough. Consequently
$\mathbf{f}_{\varepsilon}(\theta_{\lambda} \mathbf{x}_{\varepsilon}) <
+\infty$ for almost every $\lambda \in K_R$.  Applying Fubini's
theorem again, \eqref{ErgodicBound} becomes
\[
\limsup_{\varepsilon \to 0} \int_{\mathbb R^2} \chi_R(y)
d\bar{g}_{\varepsilon}(y) < +\infty,
\]
where $\chi_R = \chi * \mathbf{1}_{K_R}$, and ``$*$'' denotes
convolution.  Then since $\chi_R = 1$ in $K_{R-1}$ and $\bar{g}_\ep$
is bounded below by a constant, the assumption \eqref{gbound} in Part
1 of Theorem \ref{Wlbound} is satisfied, and we deduce from that
  theorem that $\varphi^{\eps}$ and $\bar g_{\varepsilon}$ converge,
upon extraction of a subsequence, weakly in $\dot
W^{1,p}_{loc}(\mathbb R^2)$ and weakly in the sense of measures,
respectively. Furthermore, if $\mathbf x_\eps \to \mathbf x =
(\varphi, g)$ on this subsequence, we have $2 \int_{\mathbb R^2}
\chi(y) d\bar g_\eps(y) = \mathbf f(\mathbf x_\eps) \to \mathbf
f(\mathbf x) = 2 \int_{\mathbb R^2} \chi(y) d\bar g(y) $.

We may then apply Theorem 3 of \cite{SS2} to $\mathbf f$ on $\TTT$ and
conclude that the measure $\{\widetilde
P^{\varepsilon}\}_{\varepsilon}$ defined as the push-forward of the
normalized uniform measure on $\mathbb{T}_{\ell^{\eps}}^2$ by
\[
{\lambda} \mapsto (\theta_\lambda \varphi^{\eps},
  \theta_\lambda \bar g_{\varepsilon}),
\]
converges to a translation-invariant probability measure
$\widetilde P$ on $X$ with 
\begin{equation}\label{Ergodic4}
  \liminf_{\varepsilon \to 0}F^\ep[u^\ep] \geq \int \mathbf
    f(\mathbf{x})  d \widetilde P (\mathbf x) =\int \ \mathbf
  f^*(\mathbf{x})  d \widetilde P (\mathbf x), 
\end{equation} 
where 
\begin{align}
  \label{Ergodic44}
  \mathbf f^*(\varphi,g)= \lim_{R\to \infty} \dashint_{\UR}
  \mathbf f(\theta_{\lambda} \mathbf{x})d\lambda = \lim_{R \to
      +\infty} \left( {2 \over |K_R|} \int_{\mr^2} \chi_{R}(y)
      dg(y) \right),
\end{align}
provided that $\mathbf x$ is in the support of $\widetilde P$.

The next step is to show that for $\widetilde{P}$-a.e. $\mathbf{x}$
we have $\varphi \in \mathcal{A}_m$ with $m = 3^{-2/3} (\bar
  \delta - \bar \delta_c)$, and $\mathbf{f^*}$ can be
computed. 
By \cite[Remark 1.6]{SS2}, we have that for  $\widetilde P$-a.e $\mathbf{x}$,  there exists a sequence
$\{{\lambda}_{\varepsilon}\}_{\varepsilon}$ such that $\mathbf
  {x_\eps} = (\theta_{\lambda_\eps} \varphi^{\eps},
  \theta_{\lambda_\eps} \bar g_{\varepsilon})$ converges to $\mathbf
x$ in $X$. In addition, from \eqref{Ergodic4}--\eqref{Ergodic44}, for $\widetilde{P}$-a.e. $\mathbf{x}$, we have 
\[
\lim_{R \to +\infty} \dashint_{\UR} \mathbf f(\theta_{\lambda}
\mathbf{x})d\lambda \ < +\infty,
\]
for $\widetilde P$-almost every $\mathbf{x}$. Using Fubini's
theorem again, together with the definition of $\mathbf{f}$, we then
find $$ \lim_{R \to +\infty} \left( {1 \over |K_R|} \int_{\mr^2} \chi_
  R(y) dg(y) \right) < +\infty.
$$
Therefore, since 
\begin{align}
  \label{gbeg}
  \int_{\mathbb R^2} \chi_R(y) d \bar g_\eps(y) \to \int_{\mathbb
      R^2} \chi_R(y) d g(y) \qquad \text{as } \eps \to 0,
\end{align}
a bound of the type \eqref{gbound2} holds, and the results of Part
  2 of Theorem \ref{Wlbound} hold for $\mathbf
    {x}_\eps$.  In particular, we find that
\begin{equation}\label{dpm}
-\Delta \varphi = 2\pi \sum_{a \in \Lambda} \delta_a - m,
\end{equation}
with $m = 3^{-2/3} (\bar \delta - \bar \delta_c)$, and that
\begin{equation}\label{Ergodic5} 
  \mathbf f^*(\varphi,g) = \lim_{R \to \infty} \left( {2 \over
      |K_R|} \lim_{\eps \to 
      0} \int_{\mathbb R^2} \chi_R 
    \bar g_\eps dx \right) \geq 
  3^{4/3} W(\varphi) + \frac{
    3^{4/3} }{8} m.  
\end{equation} 
The result in \eqref{Ergodic5} follows from the definition of
$\mathbf{f}^*$, \eqref{gbeg}, \eqref{Wlbound2}, the definition of $W$,
provided we can show that
\begin{equation}\label{numpoints} 
  \lim_{R \to +\infty} \frac{1}{|\UR|}
  \sum_{a \in \Lambda} \chi_R(a) =  
  \lim_{R\to + \infty}\frac{\nu(\UR)}{2\pi |\UR|} = \frac{m}{2\pi}.
\end{equation}
The latter can be obtained from \eqref{Rbounds.41}, exactly as in
Lemma 4.11 of \cite{SS2}, so we omit the proof. Note that with
\eqref{dpm}, it proves that $\varphi \in \mathcal{A}_m$, and we thus
have the claimed result.  Combining \eqref{Ergodic4} and
\eqref{Ergodic5}, we obtain
$$\liminf_{\varepsilon \to 0}F^\ep[u^\ep] \geq  \int \( 3^{4/3}
W(\varphi) + \frac{3^{2/3} }{8} (\bar \delta - \bar\delta_c)
\) d\widetilde P(\varphi, g).$$ Letting now $P^\ep$ and $P$ be the
first marginals of $\widetilde P^\ep$ and $\widetilde P$ respectively,
this proves \eqref{Lbound1} and the fact that $P$-almost every
$\varphi $ is in $\mathcal{A}_{m}$ with $m = 3^{-2/3} (\bar \delta
  - \bar \delta_c)$. \qed


\section{Upper bound construction: proof of Part ii) of Theorem~\ref{main}}\label{sec11}


We follow closely the construction performed for the magnetic
Ginzburg-Landau energy in \cite{SS2}, but our situation is somewhat
simpler, since we work on a torus (instead of a domain bounded by a
free boundary). The construction given in \cite{SS2} relies on a
result stated as Corollary 4.5 in \cite{SS2}, which we repeat below
with slight modifications to adapt it to our setting.  These results
imply, in particular, that the minimum of $W$ may be approximated by
sequences of periodic configurations of larger and larger
period. Below for any discrete set of points $\Lambda$, $|\Lambda|$
will denote its cardinal. 

\begin{pro}[Corollary 4.5 in \cite{SS2}]
  \label{mesyoung} Let $p \in (1,2)$ and let $P$ be a probability
  measure on $\dot W^{1,p}_{loc}(\mathbb{R}^2)$ which is invariant
  under the action of translations and concentrated on
  $\mathcal{A}_{1}$.  Let $Q$ be the push-forward of $P$ under
  $-\Delta$.  Then there exists a sequence $R \to \infty$ with $R^2
  \in 2\pi \mn$ and a sequence $\{b_R\}_{R}$ of $2R$-periodic
  vector fields  such that:
\begin{itemize}
\item[-] There exists a finite subset $\Lambda_R$ of
   the interior of $K_R$ such that
$$\left\{\begin{array}{ll}
- \div b_R =
 2\pi\sum_{a\in\Lambda_R}\delta_a - 1 & \text{ in }\  K_R\\
 b_R \cdot \nu=0 & \text{on} \ \p K_R.\end{array}\right.
 $$

\item[-] 
Letting $Q_R$ be the probability measure on
  $W^{-1,p}_{loc}(\mathbb{R}^2)$, which is defined as the image
  of the normalized Lebesgue measure on $K_R$ by $x\mapsto - \div
  b_R(x + \cdot )$,  we have $Q_R \to Q$ weakly as $R \to
  \infty$.
\item[-] $\D\limsup_{R\to\infty} \frac{1}{|K_R|} \lim_{\eta\to 0} \(
  \hal \int_{K_R \backslash \cup_a\in \Lambda_R B(a, \eta)} |b_R|^2 dx
  + \pi |\Lambda_R| \ln \eta\) \le\int W(\varphi) \,
  dP(\varphi)$. 
\end{itemize}
\end{pro}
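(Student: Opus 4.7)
The plan is to borrow wholesale the construction from Sandier-Serfaty \cite{SS2}, adapting only the boundary considerations (much simpler here because we eventually want a $2R$-periodic object on all of $\mathbb R^2$ rather than a configuration on a bounded domain). The basic idea is to generate $b_R$ as a ``screened'' and ``corrected'' blow-up of a typical $\varphi$ drawn from $P$, chosen so that the screening produces a vector field with vanishing normal component on $\partial K_R$, and so that the total point charge enclosed matches the background, permitting a periodic extension.

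First I would invoke the multiparameter ergodic theorem: since $P$ is translation-invariant and concentrated on $\mathcal A_1$, for $P$-a.e. $\varphi$ one has
\begin{equation*}
\frac{1}{|K_R|}\Bigl(\tfrac12\int_{K_R\setminus\cup_a B(a,\eta)}|\nabla\varphi|^2\,dx + \pi|\Lambda\cap K_R|\ln\eta\Bigr) \longrightarrow \int W(\varphi)\,dP(\varphi),
\end{equation*}
after first passing $\eta\to 0$, together with $|\Lambda\cap K_R|/|K_R|\to 1/(2\pi)$. Pick one such good $\varphi$ and a sequence $R_n\to\infty$ along which these convergences and the associated lower semicontinuity of $W$ are well-behaved. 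The discretization condition $R^2\in 2\pi\mathbb N$ can be enforced at the cost of adding or removing $o(R^2)$ points near $\partial K_R$, which is absorbable in the energy.

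Next comes the main step, and the only real obstacle: the screening of $b := \nabla\varphi$ on $K_R$. The goal is to replace $b$ in a thin boundary layer $K_R\setminus K_{R-L}$ (with $L\to\infty$ but $L/R\to 0$) by a new vector field $\tilde b$ that (i) still satisfies $-\operatorname{div}\tilde b = 2\pi\sum_{a\in\tilde\Lambda}\delta_a - 1$ for some discrete set $\tilde\Lambda$ with the corrected cardinality $|\tilde\Lambda| = R^2/(2\pi)$, (ii) has $\tilde b\cdot\nu=0$ on $\partial K_R$, and (iii) costs at most an additional $o(R^2)$ in the renormalized energy. This is exactly the screening lemma of \cite[Section 6]{SS2}: one averages $b$ over translates to smooth it in the annulus, solves an auxiliary elliptic/Hodge problem to adjust the divergence to exactly match a uniform background, and distributes the required integer point charges on a regular lattice in the layer, with $L$ large enough that the per-unit-area cost of the lattice is universally bounded. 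Defining $b_R$ to equal $b$ on $K_{R-L}$ and $\tilde b$ on the rest then yields the desired object, whose $2R$-periodic extension is well-defined thanks to the zero normal flux condition.

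Finally I would verify the push-forward convergence $Q_R \to Q$. Writing $\mu_R := -\operatorname{div} b_R$, the measure $Q_R$ is the law of $\mu_R(x+\cdot)$ when $x$ is uniform on $K_R$. Because the screening modification affects only a vanishing volume fraction, for ``most'' centers $x$ the translate $\mu_R(x+\cdot)$ agrees on every fixed compact set with $-\Delta\varphi(x+\cdot)$ for large $R$; combined with the ergodic theorem applied to the translation-invariant measure $P$, this gives $Q_R\rightharpoonup Q$ in the weak sense on $W^{-1,p}_{loc}(\mathbb R^2)$. The energy bound in the third bullet then follows from the ergodic convergence above together with the $o(R^2)$ control on the screening cost. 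The delicate part throughout is the screening construction, whose detailed estimates I would simply cite from \cite{SS2}.
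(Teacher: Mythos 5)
The paper does not reprove this proposition; it is imported verbatim (with minor cosmetic adjustments for the torus setting) as Corollary 4.5 of \cite{SS2}, so there is no independent argument in the paper for you to match. Your sketch reconstructs the correct overall shape of the SS2 proof --- ergodic selection, boundary-layer screening, verification of local-law convergence --- with the screening details cited as expected.

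There is, however, a genuine gap in your first step. For $P$ merely translation-invariant (as the statement allows, with no ergodicity hypothesis), the multiparameter ergodic theorem gives, for $P$-a.e.\ $\varphi$,
\begin{equation*}
\lim_{R\to\infty}\frac{1}{|K_R|}\,W(\varphi,\mathbf{1}_{K_R})=\mathbb{E}_P\left[W\mid\mathcal{I}_{\mathrm{inv}}\right](\varphi),
\end{equation*}
the conditional expectation given the invariant $\sigma$-algebra, and this equals $\int W\,dP$ only when $P$ is ergodic. Likewise, the empirical law of translates of a single $\varphi$ converges only to the ergodic component of $P$ containing $\varphi$, not to $P$ itself, so picking one good $\varphi$ would not yield $Q_R\rightharpoonup Q$. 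To handle a general invariant $P$ one must either average over an ergodic decomposition and diagonalize, or --- as is done in \cite{SS2} --- build $b_R$ by juxtaposing finitely many screened blocks coming from representative configurations whose empirical statistics approximate those of $P$ and whose energies sum to $\int W\,dP+o(R^2)$. A smaller point: the constraint $R^2\in 2\pi\mathbb{N}$ is enforced simply by restricting to a subsequence of scales so that the neutrality condition $2\pi|\Lambda_R|=|K_R|$ forced by the zero normal flux is solvable with an integer point count; adding or removing $o(R^2)$ points would break the divergence equation, so it is not the right fix. With these corrections, and the screening estimates cited from \cite{SS2}, your outline is consistent with the intended reference.
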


\begin{remark}\label{rem72} We would like to make the following 
    observations concerning the vector field $b_R$ constructed in
    Proposition \ref{mesyoung}.
\begin{enumerate}\item
  By construction, the vector fields $b_R$ has no distributional
  divergence concentrating on $\partial K_R$ and its translated copied
  since $b_R \cdot \nu$ is continuous across $\partial K_R$. However,
  $b_R \cdot \tau$ may not be, and this may create a singular part of
  the distributional $\curl b_R$.  This is the difficulty that
  prevents us from stating the convergence result for $P$ directly in
  Theorem \ref{main}, Part ii).
  \item
We also note that an inspection of the construction in \cite{SS2} shows that $b_R $ is curl-free in a neighborhood of each point $a\in \Lambda_R$ and  that $\curl b_R$ belongs to   $W^{-1,p}_{\loc}(\mr^2)$ for $p<\infty$.
\end{enumerate}\end{remark}

\subsection{Definition of the  test configuration}

We take $R$ the sequence given by Proposition  \ref{mesyoung}. The first
thing to do is to change the density $1$ into a suitably chosen
density $m_{\ep,R}$, in order to ensure the compatibility of the
functions with the torus volume. Recalling that $\bar \mu^\eps >
  0$ for $\bar \delta > \bar \delta_c$ and $\eps$ small enough, we
set
\begin{equation}
  \label{mep}
  m_{\ep,R}= \frac{ 4 R^2}{|\ell^\ep|^2 }\left\lfloor
    \frac{\ell^\ep \sqrt{2 \bar{\mu}^\ep}  }  { 2  R
      \bar{r}_\ep}\right\rfloor^2  
\end{equation}
where, as usual, $\lfloor x \rfloor$ denotes the integer part of a
$x$.  We note for later that \begin{equation}
\label{mem}
\left| m_{\ep,R} - {2 \bar{\mu}^\ep \over \bar r_{\eps}^2 } \right| \le
\frac{CR}{\ell^\ep}
=o_\ep(1). 
\end{equation}
Recalling also that $\bar r_\eps = 3^{1/3} + O\(\frac{\ln |\ln
  \eps|}{|\ln \eps|}\)$ and $\bar{\mu}^\ep - \bar{\mu} = O\(\frac{\ln
  \lep}{\lep}\)$, we deduce that $m_{\ep, R} \to m$,  where $m :=
  2\cdot 3^{-2/3} \bar{\mu}$, as $\ep \to 0$, for each $R$.  In
  particular, $m_{\ep, R}$ is bounded above and below by
constants independent of $\ep $ and $R$.  The choice of $m_{\ep, R}$
ensures that we can split the torus into an integer number of
translates of the square $K_{R'}$ with $R' :=
\frac{R}{\sqrt{m_{\ep, R}} }$, each of which containing an
identical configuration of $2R^2\over  \pi$ points.
 
Let $P\in \mathcal{P}$ be given as in the assumption of Part 2 of
Theorem \ref{main}, i.e., let $P$ be a probability measure
concentrated on $\mathcal{A}_m$.  Letting $\bar{P}$ be the
push-forward of $P$ by $\vp \mapsto \vp(\frac{\cdot}{\sqrt{m}})$, it
is clear that $\bar{P}$ is concentrated on $\mathcal{A}_1$, and by the
change of scales formula \eqref{scalingW} we have
\begin{equation}\label{wpp}
  \int W(\vp) \, d\bar{P}(\vp) = \frac{1}{m}\int W(\vp) \, dP(\vp)  +
  \frac{1}{4} \ln m.
\end{equation} 
We may then apply Proposition  \ref{mesyoung} to $\bar{P}$.  It yields a
vector field $\bar{b}_R$. We may then rescale it by setting
$${b}_{ \ep, R}(x)= \sqrt{m_{\ep,R}} \, \bar{b}_R (\sqrt{m_{\ep,R}} x).$$
We note that $b_{\ep, R}$ is a well-defined periodic vector-field on
$\TTT$ because $\frac{\ell^\ep \sqrt{m_{\ep,R}}}{2 R} $ is an
integer.  This new vector field satisfies
\begin{equation}\label{eqsurb}
  - \div b_{\ep, R}= 2\pi \sum_{a \in \Lambda_{\ep, R} } \delta_a - m_{\ep,R} \quad
   \text{in} \ \TTT
\end{equation} for some set of points that we denote $\Lambda_{\ep,R}$, 
and
\begin{multline*}
  \frac{1}{|K_R|}\lim_{\eta\to 0} \left( \frac12
    \int_{K_{\frac{R}{\sqrt{m_{\ep,R}}}} \backslash \cup_{a \in 
        \Lambda_{\ep, R}} B(a, \eta) }|b_{\ep,R} |^2 dx + \pi
    |\Lambda_{\ep,R}\cap K_{R/\sqrt{m_{\ep,R}}}   |\ln (\eta \sqrt{m_{\ep,R}}) \right) \\ \le
  \int W(\vp) \, d\bar{P}(\vp) +o_R(1)\quad \text{as} \ R\to \infty.
\end{multline*} 
Using \eqref{wpp} and $| \Lambda_{\ep, R}\cap K_{R/\sqrt{m_{\ep, R}}
}|= \frac{2 R^2}{\pi}$, this can be rewritten as
\begin{multline*}
  \frac{m_{\ep,R}}{|K_R|}\lim_{\eta\to 0} \( \frac12
  \int_{K_{\frac{R}{\sqrt{m_{\ep,R}}}} \backslash \cup_{a \in
      \Lambda_{\ep,R}} B(a, \eta) } | b_{\ep,R} |^2 dx + \pi
  |\Lambda_{\ep,R} \cap K_{R / \sqrt{m_{\ep,R}}} |\ln \eta \)
  +\frac{m_{\ep,R}}{4} \ln \frac{ m_{\ep,R}}{m}\\
  \le\frac{m_{\ep,R}}{m} \int W(\vp) \, dP(\vp) +o_R(1).
\end{multline*} 
But we saw that $m_{\ep,R} \to m$ as $\ep \to 0$ hence $\ln \left(
  \frac{m_{\ep,R}}{m} \right) \to 0$. Therefore, recalling the
  definition of $R'$ we have
\begin{multline}\label{bbr}
  \frac{1}{|K_{R'}|} \lim_{\eta\to 0} \( \frac12 \int_{K_{R'}
    \backslash \cup_{a \in \Lambda_{\ep,R}} B(a, \eta) } |b_{\ep,R}|^2
  dx + \pi | \Lambda_{\ep,R}\cap K_{R/\sqrt{m_{\ep, R}} }|\ln \eta \)
  \\ \le \int W(\vp) \, dP(\vp) +o_R(1)+o_\ep(1).\end{multline} It
thus follows that
\begin{equation}\label{nrbcarre0}
  \frac{1}{(\ell^\eps)^2}\lim_{\eta\to
    0}\( \frac12 \int_{\TTT\backslash\cup_{a \in \Lambda_{\ep, R}}  B(a, \eta)
  } |b_{\ep, R}|^2  dx' + \pi |\Lambda_{\ep, R} | \ln \eta\)\le  \int W(\vp) \,
  dP(\vp) +o_R(1)+o_\ep(1) .
\end{equation} 

Note that $\Lambda_{\ep, R}$ is a dilation by the factor $1 /
\sqrt{m_{\ep, R}}$, uniformly bounded above and below, of the set of
points $\Lambda_R$, hence the minimal distance between the points in
$\Lambda_{\ep, R}$ is bounded below by a constant which may depend on
$R$ but does not depend on $\ep$.  For the same reason, estimates on
$b_{R, \ep}$ are uniform with respect to $\ep$.

In addition, we have that $\bar Q_{\ep, R}$, the push-forward of the
normalized Lebesgue measure on $\TTT$ by $x \mapsto - \div b_{\ep,R}
(x+\cdot)$ converges to $Q$, the push-forward of $P$ by $-\Delta$, as
$\eps \to 0$ and $R \to \infty$.  The final step is to replace the
Dirac masses appearing above by their non-singular approximations:
\begin{align} \label{reps} \tilde \delta_a:=
  \frac{\chi_{B(a,r_{\varepsilon}')}}{\pi |r_{\varepsilon}'|^2}\qquad
  r_{\varepsilon}':= \eps^{1/3} |\ln \eps|^{1/6} \bar r_{\eps},
\end{align} 
where $\bar r_\eps$ was defined in \eqref{newRdef1}. Note also that in
view of the discussion of Section \ref{setup} it is crucial to use
droplets with the corrected radius $\ep^{1/3} \lep^{1/6} \bar r_\ep$
instead of its leading order value $\ro_\ep = 3^{1/3}
\ep^{1/3} \lep^{1/6}$.
 
Once the set $\Lambda_{\ep, R} $ has been defined, the definition of
the test function $u^\ep \in \mathcal{A}$ follows: it suffices to take
$$
u^\ep ( x )= -1 + 2\sum_{a \in \Lambda_{\ep, R}}\chi_{B(a, r_\ep')}
\left( x |\ln \eps|^{1/2} \right) ,
$$ 
which means (after blow up) that all droplets are round of identical
radii $r_\ep'$ and centered at the points of $\Lambda_{\ep, R}$. We
now need to compute $F^\ep[u^\ep]$ and check that all the desired
properties are satisfied. This is done by working with the
associated function $h_\ep'$ defined in \eqref{heqn1}, i.e.  the
solution in $\TTT$ to
\begin{equation}\label{vpconstr} 
  -\Delta h_\ep' +
  \frac{\kappa^2}{\lep} h_\ep'= \pi \bar{r}_\ep^2\sum_{a \in
    \Lambda_{\ep, R}} \tilde{\delta}_a - \bar{\mu}^\ep,
\end{equation} 
obtained from \eqref{heqn1} by explicitly setting all $A_i^\eps =
  \pi \bar r_\eps^2$.
 
\subsection{Reduction to   auxiliary functions}

Let us introduce $\phi_\ep$, which is the solution with mean zero of
\begin{equation}\label{phiepsi}
  -\Delta \phi_{\varepsilon} =
  2\pi\sum_{a \in\Lambda_{\eps,R}} \tilde \delta_a - m_{\eps,R}
 \  \text{ in }\   \TTT,
\end{equation} 
where $m_{\ep,R}$ is as in \eqref{mep}, and $f_\ep$ the solution with
mean zero of
\begin{equation}\label{efff}
-\Delta f_\ep = 2\pi \sum_{a\in\Lambda_{\ep, R}} \delta_a - m_{\ep, R}
\ \text{ in }\   \TTT.
\end{equation}
We note that $f_\ep$ is a rescaling by the factor $m_{\ep, R} \to m$
of a function independent of $\ep$, so all estimates on $f_\ep$ can be
made uniform with respect to $\ep$.
\begin{lem}\label{lemev}
  Let $h_\ep'$ and $\phi_\ep$ be as above. We have as $\ep \to 0$
 \begin{equation}\label{phil2}
 \int_{\TTT} |h_\ep' |^2 dx' \le C_R  \lep
 \end{equation} 
 and for any $1\le q<\infty$ 
 \begin{equation}\label{vpp}
   \left\| \nab\(h_\ep' - \frac{\bar{r}_\ep^2
     }{2}\phi_\ep\) \right\|_{L^{q}(\TTT)}
   \le C_{R,q},
 \end{equation}  for some constant $C_{R,q}>0$ independent of $\ep$.
\end{lem}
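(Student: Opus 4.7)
The strategy rests on the fact that $\phi_\ep$ was designed precisely so that the singular part of $h_\ep'$ near each droplet is captured by $\frac{\bar r_\ep^2}{2}\phi_\ep$: both functions solve Poisson-type equations with the same $\sum_a \tilde\delta_a$ source up to the factor $\bar r_\ep^2/2$, and by \eqref{mem} the two background constants $\bar\mu^\ep$ and $\frac{\bar r_\ep^2 m_{\ep,R}}{2}$ agree up to a constant of size $O(R/\ell^\ep)=o_\ep(1)$. I would prove \eqref{vpp} first and then derive \eqref{phil2} from it.

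For \eqref{vpp}, subtracting $\frac{\bar r_\ep^2}{2}$ times \eqref{phiepsi} from \eqref{vpconstr} shows that $w:=h_\ep' - \frac{\bar r_\ep^2}{2}\phi_\ep$ satisfies on $\TTT$
$$
-\Delta w + \frac{\kappa^2}{\lep}\,w \;=\; \Bigl(\tfrac{\bar r_\ep^2 m_{\ep,R}}{2} - \bar\mu^\ep\Bigr) \;-\; \frac{\kappa^2\bar r_\ep^2}{2\lep}\,\phi_\ep.
$$
The first term is a constant of order $o_\ep(1)$ by \eqref{mem}. For the second, the function $\phi_\ep$ is periodic with the $\ep$-independent bounded period $2R/\sqrt{m_{\ep,R}}$, and on each fundamental cell is the mean-zero solution of a periodic Poisson equation with finitely many Diracs regularized at scale $r_\ep'$. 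Standard estimates give $\|\phi_\ep\|_{L^\infty}\le C_R|\ln r_\ep'|\le C_R\lep$, so the right-hand side is bounded in $L^\infty(\TTT)$ uniformly in $\ep$. Standard $W^{2,q}$ elliptic regularity for the Helmholtz-type operator $-\Delta+\kappa^2/\lep$ on $\TTT$ then yields $\|\nabla w\|_{L^q(\TTT)}\le C_{R,q}$ for every $q\in[1,\infty)$.

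For \eqref{phil2}, write $h_\ep'=w+\frac{\bar r_\ep^2}{2}\phi_\ep$ to obtain
$$
\int_\TTT |h_\ep'|^2\,dx' \;\le\; 2\int_\TTT |w|^2\,dx' + \tfrac{\bar r_\ep^4}{2}\int_\TTT |\phi_\ep|^2\,dx'.
$$
The $\phi_\ep$-term is $\le C_R\lep$ by a per-cell computation: the $L^2$-norm of $\phi_\ep$ on each fundamental cell is bounded uniformly in $\ep$, since $\int_0^1 \ln^2(1/s)\,s\,ds<\infty$ makes the logarithmic singularity $L^2$-integrable, and the cell count in $\TTT$ is of order $\lep$. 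The $w$-term is controlled by combining Poincar\'e's inequality on $\TTT$ with \eqref{vpp} at $q=2$, after bounding the mean of $w$ directly by integrating its PDE and invoking \eqref{mem}.

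The main technical difficulty is to run the elliptic regularity for the screened Poisson operator uniformly in $\ep$, balancing the two competing scales $\ell^\ep$ and $\sqrt{\lep/\kappa^2}$; this requires exploiting the periodicity of $\phi_\ep$ at an $\ep$-independent scale so that all cell-level estimates become $\ep$-uniform, with the $\ep$-dependence entering only through the cell count of order $\lep$.
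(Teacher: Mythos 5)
Your overall structure is reasonable and you correctly identify the PDE satisfied by $w=h_\ep'-\tfrac{\bar r_\ep^2}{2}\phi_\ep$, but your proof of \eqref{vpp} contains a genuine gap and you have not caught the one idea that makes the paper's argument go through.

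The gap is in the step from the $L^\infty$ bound on the right-hand side to the conclusion $\|\nabla w\|_{L^q(\TTT)}\le C_{R,q}$. You work directly on $\TTT$, whose side length $\ell^\ep=\ell\lep^{1/2}$ grows, while the screening coefficient $\kappa^2/\lep$ degenerates. Elliptic regularity for $-\Delta+\kappa^2/\lep$ on such a domain does \emph{not} give constants independent of $\ep$, and the cell-level repair you sketch in the final paragraph does not close the gap: if each period cell $K_{R'}$ carries an $L^q(K_{R'})$-bound $O(1)$ for $\nabla w$ (which is what your $L^\infty$ bound on the source yields), then summing over the $\sim\lep$ cells tiling $\TTT$ gives $\|\nabla w\|_{L^q(\TTT)}^q\sim\lep$, i.e.\ $\|\nabla w\|_{L^q(\TTT)}\sim\lep^{1/q}$, which diverges. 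Your bound $\|\phi_\ep\|_{L^\infty}\le C_R\lep$ is correct but is the wrong quantity to feed into the elliptic estimate: it loses a full factor $\lep$ compared to what the source actually is. Since the logarithmic singularity of $\phi_\ep$ is in $L^q$ on each cell, one has $\|\phi_\ep\|_{L^q(K_{R'})}\le C_{R,q}$ for every $q<\infty$, so the zero-mean part of your source is $O(1/\lep)$ in $L^q(K_{R'})$, the per-cell estimate on $\nabla w$ is $O(1/\lep)$, and only then does the cell count give $\|\nabla w\|_{L^q(\TTT)}\lesssim\lep^{1/q-1}$, which is bounded (indeed tending to zero) for $q>1$. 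Without this refinement your argument as written is incorrect.

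The paper sidesteps all of this by rescaling. It first establishes \eqref{phil2} directly from the $2R'$-periodicity of $h_\ep'$ (the integral over $\TTT$ equals $\ell^2\lep$ times the cell average, the latter being controlled). It then rescales $h_\ep(x)=h_\ep'(x\sqrt{\lep})$, $\hat\phi_\ep(x)=\phi_\ep(x\sqrt{\lep})$ back to the fixed torus $\TT$ and observes the crucial cancellation: $w_\ep:=h_\ep-\tfrac12\bar r_\ep^2\hat\phi_\ep$ solves the \emph{unscreened} Poisson equation
$$
-\Delta w_\ep=-\kappa^2\Bigl(h_\ep-\dashint_{\TT}h_\ep\Bigr)\quad\text{in }\TT,
$$
whose right-hand side is controlled in $L^2(\TT)$ by \eqref{phil2}. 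Because $\TT$ is fixed and there is no $\ep$-dependent screening term left, standard elliptic regularity on $\TT$ gives $\|\nabla w_\ep\|_{L^q(\TT)}\le C_{R,q}$ with $\ep$-uniform constants at no extra effort; rescaling back yields \eqref{vpp}. This is the key idea your proposal misses: the change of variables converts the delicate two-scale Helmholtz problem on a growing domain into a clean Poisson problem on a fixed domain, making the $\ep$-uniformity of the elliptic constants automatic rather than something to be fought for cell by cell.
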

\begin{proof}
  Since $\Lambda_{\ep, R} $ is $2R'$-periodic, $h_\ep'$ is too,
  and thus
$$\int_{\TTT} |h_\ep' |^2 dx' = \ell^2 \lep \dashint_{K_{R'}}
|h_\ep'|^2 dx' \le C_R \lep.$$

For the second assertion, let $$h_\ep(x)= h_\ep'(x \sqrt{\lep} )
\qquad \hat{\phi}_\ep (x)= \phi_\ep(x \sqrt{\lep}) $$ be the
rescalings of $h_\ep'$ and $\phi_\ep$ onto the torus $\TT$. Rescaling
\eqref{phil2} gives
\begin{equation}\label{pjil}
\|h_\ep\|_{L^2(\TT)} \le C_R.
\end{equation}
Furthermore, the function $w_\ep {:= h_\ep - \frac12
  \bar{r}_\ep^2 \hat \phi_\ep}$ is easily seen to solve
\begin{align*}
  - \Delta w_\ep = - \kappa^2 \( h_\ep    - \dashint_{\TT} h_\eps
      dx \right)  \quad \text{in} \ \TT.
\end{align*} 
But from elliptic regularity, Cauchy-Schwarz inequality and
\eqref{pjil}, we must have
$$
\|\nab w_\ep\|_{L^{q}(\TT)} \le C \left\|h_\ep - \dashint_{\TT} h_\eps
\right\|_{L^2(\TT)} \le C_{R,q},$$ which yields \eqref{vpp}.
\end{proof}

The next lemma  consists in comparing $\phi_\ep$ and $f_\ep$.
\begin{lem}\label{lemfphi}
We have 
$$  \|\nab (f_\ep- \phi_\ep)\|_{L^\infty\(\TTT   \sm \cup_a B(a,
  r_\ep') \)}\le C_R \ep^{1/4}.
$$
\end{lem}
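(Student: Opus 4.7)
The strategy is to exhibit an explicit representation of $f_\ep-\phi_\ep$ whose gradient manifestly vanishes outside the droplets, using the two-dimensional Newton shell theorem. That theorem asserts that the logarithmic potential of a uniformly charged disk agrees, outside the disk, with the potential of a point charge of equal total mass at the disk's center. Applied to our regularization, it implies that for each $a$ the function
\[
w_a(x) := -\ln|x-a| - \frac{1}{\pi |r_\ep'|^2}\int_{B(a,r_\ep')}\bigl(-\ln|x-y|\bigr)\, dy,
\]
which solves $-\Delta w_a = 2\pi(\delta_a-\tilde\delta_a)$ on $\mr^2$, vanishes identically on $\mr^2\sm\overline{B(a,r_\ep')}$. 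This is verified by a radial computation resting on the classical identity $\int_0^{2\pi}\ln(r^2+s^2-2rs\cos\theta)\,d\theta = 4\pi\ln r$ for $r>s$. Differentiating separately inside and outside the disk one also checks that $\nab w_a$ is continuous across $\p B(a,r_\ep')$, with both one-sided values equal to $0$.

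Next, lift $\Lambda_{\ep,R}$ periodically to $\Lambda_{\ep,R}^{\rm lift}:=\Lambda_{\ep,R}+\ell^\ep\mz^2\subset\mr^2$ and define
\[
W(x) := \sum_{a\in\Lambda_{\ep,R}^{\rm lift}} w_a(x),\qquad x\in\mr^2.
\]
For $\ep$ small enough (depending on $R$) one has $r_\ep'\ll\ell^\ep/2$, so the supports $\overline{B(a,r_\ep')}$ are pairwise disjoint; hence at every $x\in\mr^2$ at most one term in the sum is nonzero, and $W$ is a well-defined $\ell^\ep$-periodic function, which we identify with a function on $\TTT$. By the defining equation for each $w_a$,
\[
-\Delta W = 2\pi\sum_{a\in\Lambda_{\ep,R}}(\delta_a-\tilde\delta_a) = -\Delta(f_\ep-\phi_\ep)\quad\text{in } \TTT.
\]
Since both $W$ and $f_\ep-\phi_\ep$ are periodic with the same Laplacian, their difference is harmonic on the compact manifold $\TTT$ and therefore constant.

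Consequently $\nab(f_\ep-\phi_\ep)=\nab W$ on $\TTT$, and the support property of each $w_a$ yields $\nab W\equiv 0$ on $\TTT\sm\cup_{a\in\Lambda_{\ep,R}}B(a,r_\ep')$. In particular, the left-hand side of the lemma is identically zero, which is strictly stronger than the stated bound $C_R\ep^{1/4}$. The only delicate points are the verification of Newton's shell theorem in $\mr^2$ and the disjointness of the periodic supports, both of which are routine, so no serious obstacle is expected. Should one prefer to avoid the explicit shell theorem and obtain only the weaker rate, an alternative route is to write $f_\ep-\phi_\ep$ via the torus Green's function $G_{\TTT}$ and Taylor expand $G_{\TTT}(x,\cdot)$ to second order inside each ball (using that $\delta_a-\tilde\delta_a$ has vanishing zeroth and first moments by the radial symmetry of $\tilde\delta_a$ about $a$); summing over the $|\Lambda_{\ep,R}|=O(\lep)$ points yields a gradient estimate of order $r_\ep'^2\lep = O(\ep^{2/3}\lep^{4/3})\ll\ep^{1/4}$.
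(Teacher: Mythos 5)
Your proof is correct, and in fact it establishes the stronger conclusion $\nab(f_\ep-\phi_\ep)\equiv 0$ on $\TTT\sm\cup_a \overline{B(a,r_\ep')}$, not just the bound $C_R\ep^{1/4}$. The route is related in spirit to the paper's but is cleaner and sharper. The paper also invokes Newton's (mean value) theorem, but does so after writing $\phi_\ep-f_\ep$ as a convolution against the periodic Green's function $G_{2R'}$ on $\mathbb{T}^2_{2R'}$, splitting $G_{2R'}(z)=-\frac{1}{2\pi}\log|z|+S_{2R'}(z)$: Newton's theorem kills the logarithmic part outside the balls, and the remaining contribution from the smooth non-harmonic remainder $S_{2R'}$ is then estimated crudely via its $C^2$ norm, yielding $C_{R'}\,|\Lambda_{\ep,R}\cap K_{R'}|\,r_\ep'\lesssim C_R\,\ep^{1/3}\lep^{1/6}\le C_R\ep^{1/4}$. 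You instead construct an explicit $\ell^\ep$-periodic potential $W$ as a locally finite sum of compactly supported shell functions $w_a$, verify the distributional identity $-\Delta W=2\pi\sum_a(\delta_a-\tilde\delta_a)=-\Delta(f_\ep-\phi_\ep)$ on $\TTT$, and conclude by uniqueness (up to constants) of periodic solutions on a compact torus; this bypasses the Green's-function split and the associated care needed near the boundary of the fundamental domain. The disjointness of the supports $\overline{B(a,r_\ep')}$ does hold for $\ep$ small depending on $R$ (since the minimal distance between points of $\Lambda_{\ep,R}$ is bounded below by a constant depending only on $R$, as noted in the paper after \eqref{nrbcarre0}), though even without it your sum $W$ remains locally finite and the argument goes through. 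One minor observation: the harmonicity and Weyl's-lemma step requires $f_\ep-\phi_\ep-W\in L^1_{loc}$, which holds because the $-\ln|x-a|$ singularities of $f_\ep$ and of $W$ cancel and $\phi_\ep$ is bounded; this is worth stating explicitly. (Incidentally, the paper's own route would also yield zero if one used that $\Delta S_{2R'}$ is constant in the fundamental domain, but the paper does not exploit this; the lemma as stated is therefore correct but not sharp, and your argument explains why.)
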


\begin{proof}
  We observe that $f_\ep $ and $\phi_\ep$ are both $2 R'$-periodic. We
  may thus write
 $$
 \phi_\ep (x) -f(x)= 2\pi \int_{\mathbb{T}^2_{2 R'} }G_{2
   R'}(x-y) \sum_{a \in \Lambda_{\ep, R} }d(\tilde{\delta}_a
 -\delta_a)(y) ,
  $$
  where $G_{2 R'}$ is the zero mean Green's function for the
  Laplace's operator on the square torus of size $2 R'$ with
  periodic boundary conditions, i.e. the solution to
  \begin{equation} \label{715} -\Delta G_{2 R'}= \delta_0-
    \frac{1}{|\mathbb{T}^2_{2R'} | } \quad \text{in} \
    \mathbb{T}^2_{2 R'}
  \end{equation} 
  which we may be split as $G_{2R'}(x)= -\frac{1}{2\pi} \log |x| +S_{2
    R'}(x)$ with $S_{2 R'}$ a smooth function.  By Newton's theorem
  (or equivalently by the mean value theorem for harmonic functions
  applied to the function $\log |\cdot |$ away from the origin), the
  contribution due to the logarithmic part is zero outside of $\cup_{a
    \in \Lambda_{\ep, R}} B(a, r_\ep')$. Differentiating the above we
  may thus write that for all $x \notin \cup_{a \in \Lambda_{\ep, R}}
  B(a , r_\ep')$,
  \begin{equation}\label{phif}
    \nab  (\phi_\ep -f)(x) =2\pi  \int_{\mathbb{T}^2_{2 R'} }
    \nab S_{2R'}  (x-y) 
    \sum_{a \in\Lambda_{\ep, R}} d(\tilde{\delta}_a -\delta_a)(y) .
  \end{equation}
  Using the $C^2$ character of $S_{2 R'}$ we deduce that
 $$  \|\nab (f_\ep- \phi_\ep)\|_{L^\infty\(\TTT   \sm \cup_a B(a,
   r_\ep') \)}\le C_{R'} |\Lambda_{\ep, R}\cap K_{R'}| r_\ep'$$
 and the result follows in view of \eqref{reps}.
\end{proof}

The next step involves a comparison of the energy of $\phi_\ep$ and
that of $b_{\ep, R}$ and leads to the following conclusion.

\begin{lem}\label{lemU1.1} Given $\Lambda_{\ep, R}$ as constructed
  above, and $h_\ep'$ the solution to \eqref{vpconstr}, we have
  \begin{equation*}\frac{1}{(\ell^\eps)^2} \lim_{\eta \to 0} \(\int_{\TTT
      \backslash \bigcup_{a \in \Lambda_{\ep, R} } B(a,\eta)}
    \frac{2}{\bar{r}_\ep^4}{|\nabla h_\ep 
      '|^2} dx' + \pi |\Lambda_{\ep, R}| \ln \eta \) \leq \int
    W(\vp) \, dP(\vp) +o_\ep(1)+ o_R(1).
\end{equation*}
\end{lem}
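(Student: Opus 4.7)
The strategy is to reduce the left-hand side, by two successive approximations, to an expression involving the constructed vector field $b_{\ep, R}$, and then invoke the bound \eqref{nrbcarre0} via an $L^2$-orthogonality argument. Concretely, one compares $h_\ep'$ to $\tfrac12 \bar r_\ep^2 \phi_\ep$ through Lemma \ref{lemev}, then $\nabla \phi_\ep$ to $\nabla f_\ep$ through Lemma \ref{lemfphi}, and finally $\nabla f_\ep$ to $b_{\ep, R}$ using the divergence- and curl-free structure encoded in \eqref{eqsurb}, \eqref{efff}, and Remark \ref{rem72}(2).

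For the first approximation I would write $h_\ep' = \tfrac12 \bar r_\ep^2 \phi_\ep + w_\ep'$ and expand $|\nabla h_\ep'|^2 = \tfrac{\bar r_\ep^4}{4} |\nabla \phi_\ep|^2 + \bar r_\ep^2 \nabla \phi_\ep \cdot \nabla w_\ep' + |\nabla w_\ep'|^2$. Lemma \ref{lemev} gives $\|\nabla w_\ep'\|_{L^q(\TTT)} \le C_{R,q}$ uniformly in $\ep$ for every $q < \infty$, so the pure $|\nabla w_\ep'|^2$ term integrates to $O(C_R^2)$, hence contributes $o((\ell^\ep)^2)$. The cross term is handled by integration by parts against $-\Delta \phi_\ep = 2\pi \sum \tilde\delta_a - m_{\ep,R}$: it reduces to a combination of averages of $w_\ep'$ on balls of radius $r_\ep'$ and of the total integral of $w_\ep'$ on $\TTT$, both of which are controlled by $\|\nabla w_\ep'\|_{L^q}$ via Sobolev/Poincaré estimates and contribute $o((\ell^\ep)^2)$. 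For the second approximation, Lemma \ref{lemfphi} yields $|\nabla \phi_\ep - \nabla f_\ep| \le C_R \ep^{1/4}$ on $\TTT \sm \cup B(a,r_\ep')$, so the $L^2$-error of squared gradients integrated over this region is $O(\ep^{1/2}(\ell^\ep)^2)$ and negligible; inside each annulus $B(a,r_\ep') \sm B(a,\eta)$, the smooth paraboloid profile of $\phi_\ep$ contributes $O(1)$ per point, and this contribution matches the logarithmic divergence in $\eta$ of $\int |\nabla f_\ep|^2$ against the extra $\pi |\Lambda_{\ep,R}|\ln\eta$ to produce the $W$-type finite limit.

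Finally, from \eqref{eqsurb} and \eqref{efff} the field $b_{\ep,R} - \nabla f_\ep$ is divergence-free on $\TTT$, and by Remark \ref{rem72}(2), $b_{\ep,R}$ is curl-free in a fixed neighborhood of each $a \in \Lambda_{\ep,R}$, where it therefore equals $\nabla f_\ep$ up to an additive constant. Integration by parts on $\TTT \sm \cup B(a,\eta)$ (with $\eta$ below that neighborhood scale) annihilates the boundary contribution and gives $\int \nabla f_\ep \cdot (b_{\ep,R} - \nabla f_\ep)\,dx' = 0$, so $\int |\nabla f_\ep|^2\,dx' \le \int |b_{\ep,R}|^2\,dx'$ on this domain. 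Dividing by $(\ell^\ep)^2$, adding $\pi|\Lambda_{\ep,R}|\ln\eta / (\ell^\ep)^2$, and applying \eqref{nrbcarre0} then closes the estimate. The main obstacle is the careful bookkeeping of all error terms relative to the scaling factor $(\ell^\ep)^2$: both the cross term in the first approximation and the annular contribution (where $\phi_\ep$ is smooth but $f_\ep$ is logarithmically singular) must be shown to combine into $o_\ep(1) + o_R(1)$ rather than producing an unexpected additive constant on the right-hand side.
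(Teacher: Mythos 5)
Your overall route—compare $h_\ep'$ to $\tfrac12\bar r_\ep^2\phi_\ep$, then $\nabla\phi_\ep$ to $\nabla f_\ep$, then $\nabla f_\ep$ to $b_{\ep,R}$ by an orthogonality argument, and close with \eqref{nrbcarre0}—is indeed the same as the paper's. But two steps have problems.

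The cross term is the real gap. After integrating by parts against $-\Delta\phi_\ep = 2\pi\sum_a\tilde\delta_a - m_{\ep,R}$, you obtain $2\pi\sum_a\int\tilde\delta_a w_\ep' - m_{\ep,R}\int_\TTT w_\ep'$, and you assert that each piece ``is controlled by $\|\nabla w_\ep'\|_{L^q}$''. That cannot be right as stated: $\int_\TTT w_\ep'$ is insensitive to $\nabla w_\ep'$ (add a constant to $w_\ep'$). Only the combination is gradient-controlled, because $2\pi\sum_a\tilde\delta_a - m_{\ep,R}$ has zero mean; but even then a \emph{global} Poincar\'e or Morrey bound on $\TTT$ produces a constant of order $(\ell^\ep)^{1-2/q}$ which blows up after dividing by $(\ell^\ep)^2$ and multiplying by $|\Lambda_{\ep,R}|\sim(\ell^\ep)^2$. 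To rescue your argument you would need the exact cancellation of the $m_{\ep,R}\int w_\ep'$ term against cell-averages of $w_\ep'$ over the periodicity cells $K_{R'}$, combined with a \emph{local} Poincar\'e on each cell and a H\"older sum—none of which you articulate. The paper's implicit argument (compare the use of \eqref{vpp} in the proof of Lemma~\ref{energyballs}) is simpler and avoids the issue entirely: estimate $\int\nabla\phi_\ep\cdot\nabla w_\ep'$ directly by H\"older with $\|\nabla\phi_\ep\|_{L^p(\TTT)}\lesssim(\ell^\ep)^{2/p}$ (for $p<2$, by periodicity) and $\|\nabla w_\ep'\|_{L^q(\TTT)}\le C_{R,q}$ (for $q>2$, from \eqref{vpp}), giving an error $O\big((\ell^\ep)^{-2/q}\big)$ after normalization. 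Alternatively, integrate by parts against $w_\ep'$'s equation, which produces the damping factor $\kappa^2/\lep$ and closes with \eqref{phil2}.

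For the comparison with $b_{\ep,R}$, your intermediate claim that curl-freeness near $a\in\Lambda_{\ep,R}$ forces $b_{\ep,R}=\nabla f_\ep+\text{const}$ there is false: both fields are curl-free with the same distributional divergence near $a$, so their difference is locally the gradient of a \emph{harmonic} function, not a constant. This does not break your orthogonality conclusion—the relevant facts are that $b_{\ep,R}-\nabla f_\ep$ is divergence-free on $\TTT$ and bounded near each $a$ (so the flux through $\partial B(a,\eta)$ is zero and the boundary contribution is $O(\eta^2)$ per point, vanishing as $\eta\to0$, not exactly zero as you state)—but the paper's route is cleaner: it invokes Poincar\'e's lemma to write $\nabla f_\ep=b_{\ep,R}+\nabla^\perp\xi_\ep$, observes $\nabla\xi_\ep\in L^p_{loc}$ uniformly via Remark~\ref{rem72}(2), extends the cross integral $\int\nabla f_\ep\cdot\nabla^\perp\xi_\ep$ from $\TTT\setminus\cup B(a,\eta)$ to all of $\TTT$ at $o_\eta(1)$ cost per point, and kills the full-torus integral by Stokes (no boundary).
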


\begin{proof} In view of Lemmas \ref{lemev} and \ref{lemfphi}, it
  suffices to show the corresponding result for $ \int_{\TTT
    \backslash \bigcup_{a \in \Lambda_{\ep, R} } B(a,\eta)}
  \frac{1}{2}|\nabla f_\ep|^2\, dx' $ instead of the one for $h_\ep'$.
  From \eqref{efff} and \eqref{eqsurb}, we have $\div (b_{\ep, R} -
  \nab f_\ep) = 0 $ hence by Poincar\'e's lemma we may write $ \nab
  f_\ep= b_{\ep, R}+ \np \xi_\ep$. We note that $-\Delta \xi_\ep=
  \curl b_{\ep,R}$, which is in $W^{-1,p}_{loc}$ for any
  $p<+\infty$ as mentioned in Remark \ref{rem72}.  By elliptic
  regularity we find that $\nab \xi_\ep \in L^p_{loc}(\mr^2)$ for
  all $1\le p<+\infty$, uniformly with respect to $\ep$.  We may
  thus write
\begin{multline}
  \label{fxieps}
  \int_{\TTT \backslash \bigcup_{a \in \Lambda_{\ep, R}} B(a,\eta)}
  \frac{1}{2}|b_{\ep, R}|^2 dx' \\
  = \int_{\TTT \backslash \bigcup_{a \in \Lambda_{\ep, R}} B(a,\eta)}
  \left( \hal |\nab f_\ep|^2 + \hal |\nab \xi_\ep|^2 - \nab f_\ep\cdot
    \np \xi_\ep \right) dx' ,
\end{multline}
where $\nab f_\ep \cdot \np\xi_\ep$ makes sense in the duality $\nab
\xi_\ep \in L^p$, $p>2$, $\nab f_\ep \in L^q$, $q<2$. 
In addition, by the same duality, we have  for any $a \in \Lambda_{\ep, R}$, 
$$\lim_{\eta \to 0} \int_{B(a,\eta)}  \nab f_\ep\cdot
\np \xi_\ep=0$$ uniformly with respect to $\ep$. 
Therefore, we may extend the domain of integration in the last
integral in \eqref{fxieps} to the whole of $\TTT$ at the expense of an
error $o_\eta(1)$ multiplied by the number of points, and obtain
\begin{multline}
  \int_{\TTT \backslash \bigcup_{a \in \Lambda_{\ep, R}} B(a,\eta)}
  \frac{1}{2}|\nabla f_\ep|^2 dx' \le \int_{\TTT \backslash \bigcup_{a
      \in \Lambda_{\ep, R}} B(a,\eta)} \hal |b_{\ep, R}|^2 dx' \\
  + \int_{\TTT} \nab f_\ep \cdot \np \xi dx' + o_\eta(\lep).
\end{multline}
  Noting that the last integral on the right-hand side vanishes by Stokes' theorem (and
    by approximating $\nab f_\ep $ and $\np \xi_\ep$ by smooth
    functions),  adding $\pi|\Lambda_{\ep, R}| \ln
  \eta$ to both sides, and combining with \eqref{nrbcarre0} we obtain
  the result.
\end{proof}

In view of \eqref{eqsurb} and \eqref{phiepsi} we have that $-\div
b_{\ep, R} + \Delta \phi_\ep = 2\pi \sum_{a \in \Lambda_{\ep, R}
}(\delta_a - \tilde \delta_a) \to 0$ in $W^{-1,p}_{loc} (\mr^2)$, so
we deduce, since the push-forward of the normalized Lebesgue measure
on $\TTT$ by $x \mapsto -\div b_{\ep, R} (x+\cdot)$ converges to $Q$,
that the push-forward of it by $ x \mapsto - \Delta \vp^\ep(x+ \cdot
)$ also converges to $Q$. Thus, part ii) of Theorem \ref{main} is
established modulo \eqref{Ubound1}, which remains to be proved.

\subsection{Calculating the energy}

We begin by calculating the exact amount of energy contained in a ball
of radius $\eta$.
\begin{lem}\label{energyballs}   Let $h_\ep' $ be as above. Then we
  have for any $a \in \Lambda_{\ep, R}$,
\begin{equation}\label{eqint}
  \int_{B(a,r_{\varepsilon}')} |\nabla h_\ep' |^2  dx' =
  \frac{3^{4/3} \pi }{8} + o_{\varepsilon}(1)
\end{equation} 
and 
\begin{equation}\label{eqann}
  \int_{B(a,\eta)\backslash B(a,r_{\varepsilon}')} |\nabla
  h_\ep' |^2  dx' \leq \frac{\pi}{2} \bar{r}_\ep^4  \ln \frac{\eta}{\rho_{\eps}} +
  o_{\varepsilon}(1)+o_\eta(1).\end{equation} 
\end{lem}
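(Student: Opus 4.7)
My plan is to exploit the two decompositions at our disposal: by Lemma~\ref{lemev}, one has $h_\ep' = \tfrac12 \bar r_\ep^2 \phi_\ep + w_\ep$ with $\|\nabla w_\ep\|_{L^q(\TTT)} \le C_{R,q}$ for every finite $q$, while by Lemma~\ref{lemfphi}, $\|\nabla(\phi_\ep - f_\ep)\|_{L^\infty(\TTT \sm \cup_a B(a,r_\ep'))} \le C_R \ep^{1/4}$, and $f_\ep$ has an explicit logarithmic structure near each point of $\Lambda_{\ep,R}$. Since the points of $\Lambda_{\ep,R}$ are mutually separated by a distance bounded below by a constant depending only on $R$, on a fixed-size neighborhood of any $a \in \Lambda_{\ep,R}$ it will be harmless to replace $\phi_\ep$ by the elementary radial solution $\phi_{a,\ep}$ of $-\Delta \phi_{a,\ep} = 2\pi \tilde\delta_a$ on $\mr^2$, which by the divergence theorem satisfies $\partial_r \phi_{a,\ep}(r) = -r/r_\ep'^2$ for $r \le r_\ep'$ and $\partial_r \phi_{a,\ep}(r) = -1/r$ for $r \ge r_\ep'$. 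A direct polar computation then yields
\[
\int_{B(a,r_\ep')} |\nabla \phi_{a,\ep}|^2\,dx = \frac{\pi}{2}, \qquad \int_{B(a,\eta) \sm B(a,r_\ep')} |\nabla \phi_{a,\ep}|^2\,dx = 2\pi \ln \frac{\eta}{r_\ep'},
\]
so that multiplication by $\bar r_\ep^4/4$ produces precisely the target right-hand sides of \eqref{eqint} and \eqref{eqann}, after noting $r_\ep'/\rho_\ep = \bar r_\ep/3^{1/3} \to 1$, hence $\ln(\eta/r_\ep') = \ln(\eta/\rho_\ep) + o_\ep(1)$.

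For \eqref{eqint}, I would expand $|\nabla h_\ep'|^2 = \tfrac14 \bar r_\ep^4 |\nabla \phi_\ep|^2 + \bar r_\ep^2 \nabla \phi_\ep \cdot \nabla w_\ep + |\nabla w_\ep|^2$ on $B(a,r_\ep')$, and further write $\phi_\ep = \phi_{a,\ep} + \tilde \phi_{a,\ep}$, where by elliptic regularity applied to $-\Delta \tilde \phi_{a,\ep} = 2\pi \sum_{b \ne a} \tilde\delta_b - m_{\ep,R}$ on a fixed-size neighborhood of $a$, $\nabla \tilde \phi_{a,\ep}$ is uniformly bounded in $L^\infty$. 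The pure-$w_\ep$ contribution is bounded by H\"older as $\|\nabla w_\ep\|_{L^q}^2 |B(a,r_\ep')|^{1-2/q} = o_\ep(1)$ for $q > 2$; the cross term is $o_\ep(1)$ by Cauchy--Schwarz using $\|\nabla \phi_\ep\|_{L^2(B(a,r_\ep'))} = O(1)$; and the error from replacing $\phi_\ep$ by $\phi_{a,\ep}$ is controlled by $\bar r_\ep^4 \|\nabla \phi_{a,\ep}\|_{L^1(B(a,r_\ep'))} \|\nabla \tilde \phi_{a,\ep}\|_{L^\infty} = O(r_\ep') = o_\ep(1)$.

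For \eqref{eqann}, I would carry out the same expansion on the fixed annulus. Lemma~\ref{lemfphi} permits replacing $\phi_\ep$ by $f_\ep$ at a pointwise gradient cost of $O(\ep^{1/4})$, whose contribution to the integral is $o_\ep(1)$. Writing $f_\ep(x) = -\ln|x-a| + S_{\ep,a}(x)$ with $S_{\ep,a}$ uniformly $C^1$-bounded on $B(a,\eta)$ (by Newton's theorem for the distant point sources plus elliptic regularity for the constant forcing), an explicit polar integration gives $\int_{B(a,\eta) \sm B(a,r_\ep')} |\nabla f_\ep|^2\,dx = 2\pi \ln(\eta/r_\ep') + O(\eta)$. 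The cross term $\bar r_\ep^2 \int \nabla \phi_\ep \cdot \nabla w_\ep$ and the pure-$w_\ep$ term on $B(a,\eta)$ are controlled by Cauchy--Schwarz and H\"older respectively, using the uniform $L^q$ bound on $\nabla w_\ep$ together with $\|\nabla f_\ep\|_{L^2(B(a,\eta))} = O(|\ln \eta|^{1/2})$; both are $o_\eta(1)$.

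The principal technical obstacle is \eqref{eqint}: on $B(a,r_\ep')$, $|\nabla h_\ep'|$ is of order $\ep^{-2/3}\lep^{-1/6}$ while $|B(a,r_\ep')|$ is of order $\ep^{2/3}\lep^{1/3}$, so that integrand and volume balance to an $O(1)$ contribution, meaning the equality is borderline and every error term must be justified by structural cancellation rather than crude size bounds. The remedy sketched above --- subtracting the explicit radial profile $\phi_{a,\ep}$ on a fixed-size neighborhood of $a$, permitted by the uniform lower bound on the separation of points of $\Lambda_{\ep,R}$, and applying elliptic regularity to the resulting smooth remainder --- reduces the delicate borderline computation to an exact one.
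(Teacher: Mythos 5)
Your proof is correct and follows essentially the same route as the paper: reduce from $h_\ep'$ to $\tfrac12\bar r_\ep^2\phi_\ep$ via Lemma~\ref{lemev} (the $L^q$ bound on $\nabla w_\ep$, $q>2$, plus H\"older on the shrinking ball), then split $\phi_\ep$ near $a$ into the explicit radial profile of the smeared charge (your $\phi_{a,\ep}$, which is precisely the paper's $-\int\ln|x-y|\,\tilde\delta_a(y)\,dy$ from \eqref{logterm}) plus a smooth remainder with uniformly bounded gradient, and integrate in polar coordinates. The only small deviation is your detour through $f_\ep$ via Lemma~\ref{lemfphi} for the annulus estimate \eqref{eqann}, which the paper avoids by working directly with the decomposition \eqref{logterm} of $\phi_\ep$; both give $2\pi\ln(\eta/r_\ep')+o_\eta(1)$ and the difference is purely cosmetic.
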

\begin{proof} 
  In view of \eqref{vpp} applied with $q>2$ and using H\"older's
  inequality, we have that for all $a \in \Lambda_{\ep, R}$,
\begin{equation}\label{beeta}
\int_{B(a, \eta)}\left|\nab (h_\ep'- \frac{\bar{r}_\ep^2}{2}
  \phi_\ep)\right|^2 \, dx' \le o_\eta(1).
\end{equation}
Thus it suffices to compute the corresponding integrals for
$\phi_\ep$.  Using again the $2R'$-periodicity of $\phi_\ep$, we
may write, with the same notation as in the proof of Lemma
\ref{lemfphi}
$$\phi_\ep(x)=  \int_{\mathbb{T}^2_{2 R'} } G_{2 R'} (x-y)\(
2\pi  \sum_{a \in \bar\Lambda_{\ep, R}} \tilde{\delta}_a (y)- m_{\ep,
  R}  \)\, dy. 
$$
Since the distances between the points in $\bar\Lambda_{\ep, R}$ are
bounded below independently of $\ep$, and the number of points is
bounded as well, we may write $\phi_\ep$ in $B(a, \eta)$ as
\begin{equation}\label{logterm}
  \phi_\ep  (x)=\psi_\ep(x) -\int_{\mathbb{T}^2_{2R'} } \ln |x-
  y| \, \tilde{\delta}_a(y)\, dy
\end{equation}
where $\psi_\ep(x)$ is smooth and its derivative is bounded
independently of $\ep$ (but depending on $R$).

Thus the contribution of $\psi_\ep$ to the integrals $\int_{B(a,\eta)}
|\nab \phi_\ep|^2 $ is $o_\eta(1)$, and its contribution to
$\int_{B(a,r_\ep')} |\nab \phi_\ep|^2$ is $o_\ep(1)$.  There remains
to compute the contribution of the logarithmic term in
\eqref{logterm}. But this is almost exactly the same computation as in
\eqref{lecerclelastlast}--\eqref{lecerclelastlast5}, and with
\eqref{beeta} it yields \eqref{eqint}, while it yields as well that
\begin{equation}\label{UBound11} \int_{B(a,\eta)\backslash
    B(a,r_{\varepsilon}')} |\nabla \phi_\ep |^2  dx' \leq 2 \pi
  \ln \frac{\eta}{r_{\varepsilon}'} +
  o_\eta(1).
\end{equation} 
Now
\[ \frac{r_{\varepsilon}' }{\rho_{\eps}} = \frac{1}{3^{1/3}}
\(\frac{|\ln \varepsilon|}{|\ln \rho_{\eps}|}\)^{1/3} = \(1 +
O\(\frac{\ln|\ln \varepsilon|}{|\ln \varepsilon|}\)\)^{1/3}.\]
Consequently $\ln \frac{r_{\varepsilon}' }{\rho_{\eps}} =
o_{\varepsilon}(1)$, and so we may replace $r_{\varepsilon}'$ with
$\rho_{\eps}$ at an extra cost of $o_{\varepsilon}(1)$ in
\eqref{UBound11}, and the result follows with \eqref{beeta}.
\end{proof}


We can now combine all the previous results to compute the energy of
the test-function $u^\ep$.  By following the lower bounds of
Proposition \ref{lem1}, it is easy to see that in our case (all the
droplets being balls of radius $r_\ep')$ all the inequalities in that
proof become equalities, and thus recalling \eqref{newRdef1}:
\begin{align*} F^{\eps}[u^{\eps}] = \frac{1}{
    |\ell^{\eps}|^2}\(2\int_{\mathbb{T}_{\ell^{\eps}}^2} \( |\nabla
  \h|^2 + \frac{\kappa^2}{|\ln \varepsilon|} |\h|^2 \) dx' + \pi \bar
  r_{\eps}^4 |\Lambda_{\ep, R}| \ln \rho_{\eps}\) +
  o_{\eps}(1),
\end{align*} 
with the help of Lemma \ref{energyballs} we have for every $R$
\begin{align*} F^{\eps}[u^{\eps}] & \le
  \frac{1}{|\ell^{\eps}|^2}\(2\int_{\mathbb{T}_{\ell^{\eps} }^2\sm
    \cup_{a\in \Lambda_{\ep , R} } B(a, \eta) } |\nabla h_\ep' |^2\,
  dx' + \pi \bar r_{\eps}^4 |\Lambda_{\ep, R}| \ln \eta +
  \frac{3^{4/3} \pi}{ 4} |\Lambda_{\ep, R}| \) +
  o_{\eps}(1)+o_\eta(1).
\end{align*}
In view of Lemma \ref{lemU1.1}, letting $\eta \to 0$, we obtain
  $$
  F^\ep[u^\ep]\le \bar {r}_{\eps}^4\( \int W(\vp) \,
  dP(\vp)+o_\ep(1)+o_R(1)\) +\frac{ 3^{4/3} \pi}{ 4 |\ell^\ep|^2}
  |\Lambda_{\ep, R}| + o_\ep(1).
  $$ 
  Letting $\ep \to 0$, using that $\bar{r}_\ep \to 3^{1/3}$ and the
  fact that $|\Lambda_{\ep, R}|= \frac{1}{2\pi} m_{\ep,R}
  |\ell^\ep|^2$ with $m_{\ep, R} \to m$, and then finally letting $R
  \to \infty$, we conclude that
$$
\limsup_{\ep \to 0} F^\ep[u^\ep] \le 3^{4/3}\int W(\vp) \, dP(\vp) +
\frac{3^{4/3} m }{8} .
$$
Since $\frac18 3^{2/3}m= \frac18 (\bar{\delta}-\bar{\delta_c})$,
this  completes  the proof of part ii) of Theorem \ref{main}. \qed \\

\subsection{Proof of Theorem \ref{th2}}
\label{sec:proof-theorem-refth2}

In order to prove Theorem \ref{th2}, it suffices to show that 
\begin{equation}\label{claim4}
  \min_{P \in \mathcal{P}} F^0 [P]= 3^{4/3}  \min_{\mathcal{A}_m}
  W+\frac{3^{2/3}(\bar{\delta}- \bar{\delta}_c)}{8}.
\end{equation} 
For the proof, we use the following result, adapted from  Corollary 4.4 in \cite{SS2}.

\begin{proposition}[Corollary 4.4 in \cite{SS2}]\label{pw2}  Let $\varphi\in
  \mathcal{A}_{1} $ be given, such that $W(\varphi) <\infty$.  For any
  $R$ such that $R^{ 2} \in 2\pi \mn,$ there exists a  $2R$-periodic $\vp_R$ such that $$
  \left\{ \begin{array}{ll} - \Delta \vp_R =
      2\pi\sum_{a\in\Lambda_R}\delta_a -1\ & \text{ in} \ K_{R },\\ [1mm]
      \D \frac{\p \vp_R}{\p \nu} = 0 & \text{on} \ \p K_R,
 \end{array}\right.$$
where $\Lambda_R$ is a finite subset of the interior of $K_R$, and
such that $$\D\limsup_{R\to\infty} \frac{W(\vp_R,\indic_{K_R})}{|K_R|}
\le W(\varphi).$$
\end{proposition}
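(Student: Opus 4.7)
My plan is to follow the strategy of the proof of Corollary 4.4 in \cite{SS2}, adapted to the present notation. Given $\varphi\in\mathcal A_1$ with $W(\varphi)<\infty$, the idea is to restrict $\varphi$ to a large cube $K_R$, modify it in a thin neighborhood of $\partial K_R$ so that the normal derivative vanishes (making periodic extension admissible), and then extend periodically. The key point is that the boundary modification costs only $o(|K_R|)$ energy on average in $R$.

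First I would select a good radius. By Definition \ref{Wvdef} we have $W(\varphi)=\limsup_{R\to\infty}\frac{1}{|K_R|}W(\varphi,\mathbf 1_{K_R})$, so there is a subsequence along which this average converges to $W(\varphi)$. Applying Fubini to
\[
\int_{R-1}^{R+1}\int_{\partial K_t}|\nabla\varphi|^2\,d\mathcal H^1\,dt=\int_{K_{R+1}\setminus K_{R-1}}|\nabla\varphi|^2\,dx,
\]
and using the $L^p_{\rm loc}$-regularity of $\nabla\varphi$ away from $\Lambda$, for each large $R$ one finds a nearby $R'$ with $R'^2\in 2\pi\mathbb N$ such that (i) $\partial K_{R'}$ is bounded away from $\Lambda$, (ii) $\int_{\partial K_{R'}}|\nabla\varphi|^2\,d\mathcal H^1=o(R')$, and (iii) by \eqref{Lamm}, $|\Lambda\cap K_{R'}|=2R'^2/\pi+O(R')$. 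Relabel $R'$ as $R$.

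Next, to satisfy the compatibility condition $|\Lambda_R|=|K_R|/(2\pi)=2R^2/\pi$ for the Neumann problem on $K_R$, adjust $\Lambda\cap K_R$ to a set $\Lambda_R$ of this exact cardinality by adding or removing $O(R)$ points in a neighborhood of $\partial K_R$, keeping $\Lambda_R$ in the interior and uniformly separated from the pre-existing points. Define $\varphi_R$ as the unique (up to an additive constant) solution on $K_R$ of
\begin{equation*}
-\Delta\varphi_R=2\pi\sum_{a\in\Lambda_R}\delta_a-1,\qquad \partial_\nu\varphi_R=0 \text{ on }\partial K_R,
\end{equation*}
and extend it by even reflection across each face of $K_R$ to a $2R$-periodic function on $\mathbb R^2$; the Neumann condition guarantees continuity of the reflected gradient across $\partial K_R$.

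Finally I would estimate the energy. Writing $\zeta_R:=\varphi_R-\varphi$ on $K_R$, the difference $\zeta_R$ satisfies an elliptic equation whose right-hand side is supported on the $O(R)$ modified singularities (concentrated near $\partial K_R$) with Neumann data $-\partial_\nu\varphi$ on $\partial K_R$; standard elliptic estimates together with (ii) yield $\int_{K_R}|\nabla\zeta_R|^2=o(|K_R|)$. Expanding $|\nabla\varphi_R|^2=|\nabla\varphi|^2+2\nabla\varphi\cdot\nabla\zeta_R+|\nabla\zeta_R|^2$ on $K_R\setminus\bigcup_a B(a,\eta)$, applying Cauchy--Schwarz on the cross term, and observing that the logarithmic contributions $\pi|\Lambda_R|\ln\eta$ and $\pi|\Lambda\cap K_R|\ln\eta$ differ by only $o(|K_R|)$, yields
\begin{equation*}
W(\varphi_R,\mathbf 1_{K_R})\le W(\varphi,\mathbf 1_{K_R})+o(|K_R|).
\end{equation*}
Dividing by $|K_R|$ and taking $\limsup_R$ gives the claim. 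The main obstacle is this last step: producing a genuinely sublinear bound on $\zeta_R$ depends delicately on both the controlled boundary trace of Step 1 and the localized nature of the singularity adjustment of Step 2, and one must verify that neither correction contributes at leading order when $\eta$ is sent to zero.
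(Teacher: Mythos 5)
This proposition is not proved in the paper you were given: it is quoted verbatim (with minor adaptation) from Corollary~4.4 of \cite{SS2}, and the paper's ``proof'' is simply the citation. So there is nothing internal to compare against, but your reconstruction can be assessed on its own terms, and it contains a genuine error.

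Your key step is to ``extend by even reflection across each face of $K_R$ to a $2R$-periodic function.'' This is inconsistent: even reflection across a face of $K_R=[-R,R]^2$ produces a function of period $4R$, not $2R$ (reflect $[-R,R]$ across $x=R$, obtain a function on $[-R,3R]$, then translate by $4R$). The proposition asserts $2R$-periodicity, i.e.\ the period cell is $K_R$ itself, so the extension must be straight translation by $2R\mathbb Z^2$. Under that extension the Neumann condition makes the normal component $\partial_\nu\vp_R$ continuous (equal to zero) across $\partial K_R$, but the tangential component of $\nabla\vp_R$ is in general discontinuous there. This is precisely the subtlety the present paper records in Remark~\ref{rem72}: the vector field produced by this construction has a singular part of its distributional curl on $\partial K_R$ and its translates, which is what forces the convergence in part (ii) of Theorem~\ref{main} to be phrased via $Q=$ pushforward under $-\Delta$ rather than directly via $P$. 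Your reflection scheme would eliminate this curl singularity but at the price of the wrong period, so it does not prove the statement as formulated, and it also hides exactly the technical issue that the remainder of Section~\ref{sec11} is built around.

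A secondary problem is the energy estimate. You write $\zeta_R=\vp_R-\vp$, expand $|\nabla\vp_R|^2$, and ``apply Cauchy--Schwarz on the cross term.'' But $\nabla\vp\notin L^2(K_R)$: each point of $\Lambda$ carries a non-integrable $|x-a|^{-2}$ singularity in $|\nabla\vp|^2$ that is only renormalized by the $\pi\ln\eta$ counterterm. The cross term $\int_{K_R\setminus\cup B(a,\eta)}\nabla\vp\cdot\nabla\zeta_R$ cannot be controlled by a naive Cauchy--Schwarz with an $L^2$ bound on $\nabla\zeta_R$, and the claimed bound $\int_{K_R}|\nabla\zeta_R|^2=o(|K_R|)$ is itself unjustified since $\zeta_R$ has the same type of logarithmic singularities at the $O(R)$ added/removed points; one must match singularities pairwise and estimate the resulting \emph{renormalized} difference. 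This is the actual content of the argument in \cite{SS2}, and your sketch leaves it as an assertion rather than establishing it.
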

Let us take $\vp$ to be a minimizer of $W$ over $\mathcal{A}_{m}$
(which exists from \cite{SS2}).  We may rescale it to be an element of
$\mathcal{A}_1$. Then Proposition \ref{pw2} yields a $\vp_R$, which
can be extended periodically. We can then repeat the same construction
as in the beginning of this section, starting from $\nab \vp_R$
instead of $b_R$, and in the end it yields a $u^\ep$ with
 $$\limsup_{\ep \to 0} F^\ep[u^\ep] \le   3^{4/3}
 \min_{\mathcal{A}_m} W+\frac{3^{2/3}(\bar{\delta}-
   \bar{\delta}_c)}{8}.$$ It follows that $$\limsup_{\ep \to 0}
 \min_{\mathcal{A}} F^\ep \le 3^{4/3} \min_{\mathcal{A}_m}
 W+\frac{3^{2/3}(\bar{\delta}- \bar{\delta}_c)}{8}.$$ But by part i)
 of Theorem \ref{main} applied to a sequence of minimizers of $F^\ep$,
 we also have
$$\liminf_{\ep \to 0} \min_{\mathcal{A}}F^\ep  \ge \inf_{\mathcal{P}}
F^0 \ge 3^{4/3}  \min_{\mathcal{A}_m} W+\frac{3^{2/3}(\bar{\delta}-
  \bar{\delta}_c)}{8}$$ where the last inequality is an immediate
consequence of the definition of $F^0$. Comparing the inequalities
yields that there must be equality and \eqref{claim4} is proved, which
completes the proof of Theorem \ref{th2}. \qed

\vskip 1cm

\noindent \textbf{Acknowledgments} The research of D. G.  was
partially supported by an NSERC PGS D award. The work of C. B. M. was
supported, in part, by NSF via grants DMS-0718027 and DMS-0908279. The
work of S. S. was supported by a EURYI award. C. B. M. would like to
express his gratitude to M. Novaga and G. Orlandi for valuable
discussions.


\bibliographystyle{plain}

\noindent {\sc Dorian Goldman\\
  Courant Institute of Mathematical
  Sciences, New York, NY 10012, USA,\\
  \& UPMC Univ  Paris 06, UMR 7598 Laboratoire Jacques-Louis Lions,\\
  Paris, F-75005 France ;\\
  CNRS, UMR 7598 LJLL, Paris, F-75005 France \\
  {\tt      dgoldman@cims.nyu.edu} \\
  \\
  {\sc Cyrill B. Muratov} \\
  Department of Mathematical Sciences, \\ New Jersey Institute of
  Technology, \\ Newark,
  NJ 07102, USA \\
  {\tt muratov@njit.edu}
  \\  \\
  {\sc Sylvia Serfaty}\\
  UPMC Univ  Paris 06, UMR 7598 Laboratoire Jacques-Louis Lions,\\
  Paris, F-75005 France ;\\
  CNRS, UMR 7598 LJLL, Paris, F-75005 France \\
  \&  Courant Institute, New York University\\
  251 Mercer st, NY NY 10012, USA\\
  {\tt serfaty@ann.jussieu.fr}

\end{document}